\DeclareMathOperator{\CAT}{CAT}
\DeclareMathOperator{\sing}{sing}
\DeclareMathOperator{\reg}{reg}
\DeclareMathOperator{\join}{Join}
\newtheorem{theorem}{Theorem}[section]
\newtheorem{proposition}[theorem]{Proposition}
\newtheorem{notation}[theorem]{Notation}
\newtheorem{lemma}[theorem]{Lemma}
\newtheorem{remark}[theorem]{Remark}
\newtheorem{example}[theorem]{Example}
\newtheorem{definition}[theorem]{Definition}
\begin{document}

\title{Actions on products of $\mathrm{CAT}(-1)$ spaces}
\author{Teresa García}
\author{Joan Porti}
\address{ Departament de Matem\`atiques, Universitat Aut\`onoma de Barcelona, 
08193 Cerdanyola del Vall\`es, Spain, and 
Barcelona Graduate school of Mathematics (BGSMath) }
\thanks{First author supported by grant
BES-2013-065701. Both authors partially supported by  grant 
FEDER-Meic MTM2015--66165--P}
\email{tggalvez@mat.uab.cat}
\email{porti@mat.uab.cat}
\date{\today}
\maketitle
\begin{abstract}
 We show that for $X$ a proper $\mathrm{CAT}(-1)$ space there is a maximal open subset of the horofunction compactification of $X\times X$ with 
 respect to the maximum metric that compactifies the diagonal action of an infinite quasi-convex group of the isometries of $X$. We also consider 
 the product action of two quasi-convex representations of an infinite hyperbolic group on the product of two different proper $\mathrm{CAT}(-1)$ spaces.  

\end{abstract}

\section{Introduction}
The action of a discrete group $\Gamma$ of isometries on the ideal boundary of a proper  $\mathrm{CAT}(-1)$ space $X$
has a dynamical decomposition $\partial_{\infty} X = \Omega_{\Gamma}\sqcup \Lambda_{\Gamma}$, where 
$\Lambda_{\Gamma}$ is the limit set of the action, and $\Omega_{\Gamma}$ is
the largest open set of the 
ideal boundary where $\Gamma$ acts properly discontinuously  \cite{Coornaert2}. 
In addition, if $\Gamma$ is quasi-convex, then the action on $X\cup \Omega_{\Gamma}$ is also properly discontinuous and cocompact, 
so $\Omega_{\Gamma}$ compactifies the action of $\Gamma$ on $X$  \cite{Swenson}. 


In contrast, for $\mathrm{CAT}(0)$ spaces the dynamical decomposition of the 
visual boundary no longer holds. It may happen, for instance, that there exist 
several maximal subsets of the ideal boundary where the action is properly 
discontinuous or it may also happen that such a set does not exist. There are 
some works in this line, for instance by Papasoglu and Swenson \cite{Papasoglu-Swenson}, or \cite{KLP} in 
the context of symmetric spaces. In this article we consider the case of the 
product of two proper $\mathrm{CAT}(-1)$ spaces, which is  a $\mathrm{CAT}(0)$ space.  
Relevant contributions in these  kind of products are the works of 
 Geniska~\cite{Geniska}   and Link \cite{Link}.

As an example of action on a product of  $\mathrm{CAT}(-1)$ spaces, consider a
cocompact fuchsian group $\Gamma<\mathrm{Isom}(\mathbb{H}^2)$
acting diagonally on 
$\mathbb{H}^2\times\mathbb{H}^2$. The ideal boundary of $\mathbb{H}^2$ is 
$\partial_{\infty}\mathbb{H}^2\cong S^1$, so the visual boundary of the product is the spherical join of two 
circles, $\partial_{\infty}(\mathbb{H}^2\times\mathbb{H}^2) \cong S^1\times 
S^1\times [0, \pi/2]/\!\sim$, where $\sim$ is the relation that collapses each set 
$\{*\}\times S^1\times\{0\}$ and $S^1\times \{*\}\times\{\pi/2\}$ to a point. The 
diagonal action on the ideal boundary preserves the sets $S^1\times 
S^1\times\{\theta\}$ for each $\theta\in[0, \pi/2]$, so 
finding a domain of discontinuity
amounts to find a 
domain of discontinuity for the diagonal action on $S^1\times S^1$. This is not 
possible, since the action on $S^1\times S^1$ has a dense orbit \cite[Thm. 
3.6.1]{Nicholls} and hence admits no domain of discontinuity.

In this example it is worth to notice that the visual compactification of  
$\mathbb{H}^2\times\mathbb{H}^2$ is the horofunction compactification with respect to the product metric, or $\ell^2$ metric.
Instead, here we work with the $\ell^\infty$ or maximum metric, which happens to be better suited for those compactifications.


For $X_1$, $X_2$ two proper $\mathrm{CAT}(-1)$ spaces, we denote their 
horofunction compactification with respect to the max metric by 
$\overline{X_1\times X_2}^{\max}$. It turns out that the ideal boundary of this 
compactification, $\partial_{\infty}^{\max}(X_1\times X_2)$, is homeomorphic to 
the join of the boundaries of each factor. In particular, the  
ideal boundaries for both metrics, $\ell^2$ and $\ell^\infty$, are homeomorphic,  but their 
compactifications are not equivalent, since the 
identity does not extend continuously to the compactifications. 
The max compactification is adapted to diagonal actions, as it allows  to 
find an ideal subset where the diagonal action is properly discontinuous and 
which compactifies the action, the main theorem of this paper being:

\begin{theorem}\label{diagonalcase} Let $X$ be a proper $\mathrm{CAT}(-1)$ space, and $\Gamma$ an infinite quasi-convex group of isometries of $X$.  
There exists an open set $\Omega^{\max}_{\Gamma}\subset \partial_{\infty}^{\max} (X\times X)$ such that:
\begin{enumerate}[label=(\alph*)]
\item The diagonal action of $\Gamma$ on $X\times X\cup \Omega^{\max}_{\Gamma}$ is properly discontinuous and cocompact.
\item $\Omega^{\max}_{\Gamma}$ is the largest open subset of $\partial_{\infty}^{\max} (X\times X)$
 where the diagonal action  is properly discontinuous.
\end{enumerate} 
\end{theorem}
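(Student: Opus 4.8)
The plan is to work throughout with the explicit horofunction model of $\partial_\infty^{\max}(X\times X)$ furnished by the join description above. Writing $b_\xi(x)=\lim_{z\to\xi}\big(d(x,z)-d(o,z)\big)$ for the Busemann function based at a fixed $o\in X$, a direct computation of the $\max$-horofunctions shows that the ``mixed'' stratum of the boundary consists of the functions
\[
h_{\xi_1,\xi_2,t}(x_1,x_2)=\max\!\big(t+b_{\xi_1}(x_1),\,b_{\xi_2}(x_2)\big)-\max(t,0),\qquad (\xi_1,\xi_2,t)\in\partial_\infty X\times\partial_\infty X\times\mathbb{R},
\]
while the two pure factors appear as the limits $t\to+\infty$ and $t\to-\infty$ and are copies of $\partial_\infty X$. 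I would first record how the diagonal $\Gamma$ acts: renormalizing $\gamma\cdot h=h(\gamma^{-1}\,\cdot\,)-h(\gamma^{-1}o)$ and using $b_\xi(\gamma^{-1}x)=b_{\gamma\xi}(x)+b_\xi(\gamma^{-1}o)$ gives
\[
\gamma\cdot h_{\xi_1,\xi_2,t}=h_{\gamma\xi_1,\gamma\xi_2,\,t+\beta_\gamma(\xi_1,\xi_2)},\qquad \beta_\gamma(\xi_1,\xi_2)=b_{\xi_1}(\gamma^{-1}o)-b_{\xi_2}(\gamma^{-1}o),
\]
so that on the pure factors one recovers the ordinary boundary action, while on the mixed stratum $\Gamma$ acts on the two coordinates as usual and translates the balance $t$ by the cocycle $\beta_\gamma$. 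The structural fact that drives everything is that $\beta_\gamma(\zeta,\zeta)=0$: the balance is \emph{fixed} along the diagonal $\xi_1=\xi_2$.

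With this in hand I would define
\[
\Omega^{\max}_\Gamma=\big\{\,h_{\xi_1,\xi_2,t}:\ \neg(\xi_1=\xi_2\in\Lambda_\Gamma)\,\big\}\ \cup\ \Omega_\Gamma^{(1)}\ \cup\ \Omega_\Gamma^{(2)},
\]
where $\Omega_\Gamma^{(i)}\cong\Omega_\Gamma$ are the classical domains of discontinuity sitting inside the two pure factors. Invariance is immediate from the computation above, and openness is a short check in the join topology: accumulation of the mixed part on a pure factor point $\xi$ forces the corresponding coordinate near $\xi$, so a pure point of $\Omega_\Gamma$ has a join neighbourhood whose mixed points have a coordinate outside $\Lambda_\Gamma$ and hence lie in $\Omega^{\max}_\Gamma$, while near a mixed point the condition $\neg(\xi_1=\xi_2\in\Lambda_\Gamma)$ is visibly open. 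As a sanity check, for $X=\mathbb{H}^2$ and $\Gamma$ cocompact fuchsian (so $\Omega_\Gamma=\varnothing$ and $\Lambda_\Gamma=\partial_\infty X$) this set is $\{(\xi_1,\xi_2,t):\xi_1\neq\xi_2\}\cong \partial^2_\infty\mathbb{H}^2\times\mathbb{R}\cong T^1\mathbb{H}^2$, with $\Gamma$ acting by the (cocompact, properly discontinuous) geodesic flow coordinates — this is the picture I would use as a guide.

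For part (a), properness I would deduce from the convergence dynamics of $\Gamma$ on $\partial_\infty X$ together with the drift of $t$. Given $\gamma_n\to\infty$ and a point of $\Omega^{\max}_\Gamma$, the convergence group property provides attracting/repelling points $a,r\in\Lambda_\Gamma$ with $\gamma_n\to a$ uniformly away from $r$; if neither coordinate equals $r$ both coordinates collapse to $a$ and the orbit approaches the (removed) diagonal, while if a coordinate equals $r$ the cocycle $\beta_{\gamma_n}$ is forced to $\pm\infty$ and the orbit escapes to a pure factor over the removed point $a$ — in either case it leaves every compact subset of $\Omega^{\max}_\Gamma$, so no compact set meets infinitely many of its translates. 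For cocompactness I would split $X\times X\cup\Omega^{\max}_\Gamma$ into the part lying over $\Lambda_\Gamma\times\Lambda_\Gamma$ and its complement: over $\Lambda^2_\Gamma\setminus\Delta$ the balance coordinate realizes the space as the convex-core restriction of Gromov's geodesic flow, on which $\Gamma$ is cocompact because it is cocompact on the convex hull of $\Lambda_\Gamma$ (quasi-convexity, via Swenson), whereas the remaining directions are controlled by the classical cocompact fundamental set for the action on $X\cup\Omega_\Gamma$ in each factor; assembling these produces a compact fundamental set.

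Finally, for the maximality statement (b) I would show that every point removed from $\Omega^{\max}_\Gamma$ is non-wandering, so no strictly larger open set can carry a properly discontinuous action. For a diagonal limit point $h_{\zeta,\zeta,t}$ with $\zeta\in\Lambda_\Gamma$, the vanishing $\beta_\gamma(\zeta,\zeta)=0$ shows the orbit is exactly $\{h_{\gamma\zeta,\gamma\zeta,t}:\gamma\in\Gamma\}$ with the \emph{same} $t$; since $\Gamma$ acts minimally on its limit set (or fixes $\zeta$ with infinite stabilizer in the elementary case), the orbit returns arbitrarily close to $h_{\zeta,\zeta,t}$ along $\gamma\to\infty$, contradicting proper discontinuity in any neighbourhood. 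For a pure limit point $\xi\in\Lambda_\Gamma$ non-properness is the classical statement for the boundary action. I expect the genuine technical work, and the main obstacle, to be the cocompactness in (a): one must rigorously match the horofunction balance $t$ with the geodesic-flow parameter on the convex core so that cocompactness of $\Gamma$ on the convex hull of $\Lambda_\Gamma$ applies uniformly, and simultaneously glue this to the pure-factor domains without losing compactness — and, on the properness side, one must carefully rule out hidden accumulation in the borderline case where exactly one coordinate tends to the repelling fixed point.
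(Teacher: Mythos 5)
Your set $\Omega^{\max}_{\Gamma}$ is the correct one (it agrees with the paper's description: under the join identification it is $((\partial_\infty X\times\partial_\infty X)\setminus\Delta_\Gamma)\times\mathbb{R}\cup\Omega^1_\Gamma\cup\Omega^2_\Gamma$), and your route is genuinely different from the paper's. You work directly in join coordinates with the cocycle $\beta_\gamma(\xi_1,\xi_2)=b_{\xi_1}(\gamma^{-1}o)-b_{\xi_2}(\gamma^{-1}o)$ and convergence-group dynamics; the paper instead extends the nearest-point ($\max$-metric) projection $\pi(x,y)=(m,m)$, $m$ the midpoint of $[x,y]$, to a continuous, equivariant, proper map $\tilde\pi\colon\overline{X\times X}^{\max}\to\overline{X}\cong\overline{\Delta}^{\max}$, sets $\Omega^{\max}_\Gamma=\tilde\pi^{-1}(X\cup\Omega_\Gamma)\cap\partial^{\max}_\infty(X\times X)$, and pulls back Swenson's compact fundamental set $\overline{D}$ for $X\cup\Omega_\Gamma$: since $\tilde\pi$ is proper, $\tilde\pi^{-1}(\overline D)$ is compact and inherits all three defining properties of a fundamental set, giving proper discontinuity and cocompactness on $X\times X\cup\Omega^{\max}_\Gamma$ simultaneously. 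Your maximality argument (the cocycle vanishes on the diagonal, so points of $\Delta_\Gamma\times\mathbb{R}$ are recurrent) is in substance the paper's Lemma on the large limit set: every $(\xi,\xi,C)$ with $\xi\in\Lambda_\Gamma$ is an accumulation point of an orbit $(\gamma_n x,\gamma_n y)$, so no strictly larger open set works.

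The genuine gap is cocompactness in (a), exactly where you locate "the main obstacle." Your proposed splitting of $X\times X\cup\Omega^{\max}_\Gamma$ into "the part lying over $\Lambda_\Gamma\times\Lambda_\Gamma$ and its complement" is not defined for interior points: a pair $(x,y)\in X\times X$ has no canonical pair of ideal endpoints, and the whole difficulty of the theorem is that sequences $(x_n,y_n)$ with $d(x_n,y_n)\to\infty$ must be captured, after translation by $\Gamma$, by the ideal set $\Omega^{\max}_\Gamma$ — cocompactness fails on $X\times X$ alone and is restored only by the gluing of interior and boundary, which is precisely what "assembling these produces a compact fundamental set" leaves unproved. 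Matching the balance coordinate $t$ with the geodesic-flow parameter over $\Lambda_\Gamma^2\setminus\Delta$ handles only boundary points with both coordinates in $\Lambda_\Gamma$; you still need a compact set in the $\max$-compactification whose $\Gamma$-translates cover all of $X\times X$ together with all of $\Omega^{\max}_\Gamma$, including the mixed points with one coordinate in $\Omega_\Gamma$, the singular strata over $\Omega_\Gamma$, and the interior. The paper's device supplying this is the continuity of $\tilde\pi$ at \emph{every} boundary point (proved via the continuous extension of the Gromov product to $\partial_\infty X$ of a proper $\mathrm{CAT}(-1)$ space), which makes $\tilde\pi^{-1}(\overline D)$ closed in the compact space $\overline{X\times X}^{\max}$ and hence compact; without this (or an equivalent substitute) part (a) is not established. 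A secondary, smaller omission on the properness side: you treat only boundary-to-boundary dynamics, whereas proper discontinuity on $X\times X\cup\Omega^{\max}_\Gamma$ also requires excluding dynamical relations between interior sequences and boundary points; this too falls out of the pullback argument in the paper but needs a separate check in your setup.
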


When $\Gamma$ acts cocompactly on $X$, the theorem has been proved in \cite{Tere}.
To prove Theorem \ref{diagonalcase} we show that the nearest point projection from $X\times X$
to the diagonal, by the $\max$ metric, extends continuously to  a map on $\partial_{\infty}^{\max}(X\times X)$
with image on the compactification of the diagonal.


The ideal boundary $\partial_{\infty}^{\max}(X_1\times X_2)$ decomposes in two parts, the regular and the singular one.
The former,  $\partial_{\infty}^{\max}(X_1\times X_2)_{\reg}$, consist of points that correspond 
to the maximum of two Busemann functions, one on each factor, and is 
homeomorphic to 
$\partial_{\infty}X_1\times\partial_{\infty}X_2\times\mathbb{R}$. 
The singular part,  $\partial_{\infty}^{\max}(X_1\times X_2)_{\sing}$,
consists of points 
which are Busemann functions in one of the  factors, and it is  
homeomorphic to $\partial_{\infty}X_1\sqcup\partial_{\infty}X_2$.

In a 
$\mathrm{CAT}(0)$ space the limit set $\Lambda_{\Gamma}$ is the set of 
accumulation points in the ideal boundary of an orbit, and it is independent of 
the choice of the orbit. In our setting, since the $\max$ metric is no longer 
$\mathrm{CAT}(0)$, the set of accumulation points of an orbit depends on the 
orbit, so we consider the \emph{large limit set}, which we define as the union 
of all the accumulation points of each orbit. For a diagonal action it turns out 
that the large limit set is contained in the regular part of the boundary and 
that $\Omega^{\max}_{\Gamma}$ is the complement of the closure of the large 
limit set. In the particular case in which $\Gamma$ is a cocompact group, the 
set $\Omega^{\max}_{\Gamma}$ is naturally homeomorphic to the set of 
parameterized geodesics in one factor, as  shown in \cite{Tere}. 

This $\max$ metric is a Finsler metric. Bordifications through Finsler metrics 
of symmetric spaces have been used by Kapovich and Leeb \cite{KapovichLeeb} to obtain 
a characterization of Anosov representations. In a product
of $\CAT(-1)$ spaces, this
corresponds to the $\ell^1$ metric.
 
\medskip

So far we have seen that the $\max$ compactification is very convenient for 
diagonal actions, but it would be interesting to see in what other cases it is 
useful. For $\Gamma$ an infinite hyperbolic group, we consider $\rho_1$ and 
$\rho_2$ two quasi-convex representations on the isometries of two different 
$\mathrm{CAT}(-1)$ spaces $X_1$ and $X_2$, and their product action 
$\rho_1\times\rho_2$ on
$X_1\times X_2$. 
In analogy to 
the diagonal case, it is reasonable to ask under what conditions the large limit 
set    $\Lambda_{\rho_1\times\rho_2}$ of the product action  also remains inside the regular part of the 
boundary.  We  see that the large limit set being regular implies that  $ 
|d_1(\rho_1(\gamma)o, \rho_1(\gamma')o) - d_2(\rho_2(\gamma)o', 
\rho_2(\gamma')o')| $ is bounded for some $o\in X_1$, $o'\in X_2$
and uniformly for any $\gamma, \gamma'\in \Gamma$. Two representations satisfying this condition are 
said to be \emph{coarsely equivalent}.
As it turns out this  condition  is related to 
the so called marked length spectrum conjecture, since it implies that the 
translation lengths of the two representations are the same. Indeed, we  
prove the following:

\begin{proposition}\label{theoremequivalences} Let $X_1$, $X_2$ be proper $\mathrm{CAT}(-1)$ spaces and $\overline{X_1\times X_2}^{\max}$ the horofunction compactification with respect to $d_{\max}$. Let $\Gamma$ be an infinite hyperbolic group and $\rho_1\colon \Gamma \rightarrow \operatorname{Isom}(X_1)$, $\rho_2\colon \Gamma \rightarrow \operatorname{Isom}(X_2)$ two quasi-convex representations. The following are equivalent:
\begin{enumerate}[label=(\alph*)]
\item $\Lambda_{\rho_1\times\rho_2}\subset \partial_{\infty}^{\max}(X_1\times X_2)_{\reg}.$ 
\item $\rho_1,\,\rho_2  \text{ are coarsely equivalent. } $
\item  $\tau(\rho_1(\gamma)) = \tau(\rho_2(\gamma))$ for all $\gamma\in\Gamma$,
\end{enumerate}
where $\tau(\rho_i(\gamma))$ are the translation lengths for $i=1,2$.
\end{proposition}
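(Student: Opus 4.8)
The plan is to establish $(a)\Leftrightarrow(b)$ directly from the structure of the max compactification, the easy implication $(b)\Rightarrow(c)$, and finally the converse $(c)\Rightarrow(b)$, in which the real content lies. Throughout write $\ell_i(\eta)=d_i(o_i,\rho_i(\eta)o_i)$; since $d_i$ is $\rho_i$-invariant one has $d_i(\rho_i(\gamma)o_i,\rho_i(\gamma')o_i)=\ell_i(\gamma^{-1}\gamma')$, so that, as $\gamma,\gamma'$ range over $\Gamma$, the quantity in $(b)$ is exactly $\sup_{\eta}|\ell_1(\eta)-\ell_2(\eta)|$.

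For $(a)\Leftrightarrow(b)$ I would run the horofunction computation underlying the regular/singular splitting of $\partial_{\infty}^{\max}(X_1\times X_2)$. A limit point of the orbit of $(o_1,o_2)$ is the limit of $(x_1,x_2)\mapsto \max\bigl(d_1(x_1,z_1^n),d_2(x_2,z_2^n)\bigr)-\max\bigl(\ell_1(\gamma_n),\ell_2(\gamma_n)\bigr)$ for some $\gamma_n\to\infty$, where $z_i^n=\rho_i(\gamma_n)o_i$. Passing to a subsequence with $z_i^n\to\xi_i\in\partial_{\infty}X_i$ (possible since the $\rho_i$ are quasi-convex and the orbits leave every compact set), the limit horofunction is \emph{regular}, i.e.\ a shifted maximum of the Busemann functions $b_{\xi_1},b_{\xi_2}$, precisely when $\ell_1(\gamma_n)-\ell_2(\gamma_n)$ stays bounded; if this difference tends to $\pm\infty$, one factor dominates the maximum on every compact set and the limit is a \emph{singular} point of $\partial_{\infty}X_1\sqcup\partial_{\infty}X_2$. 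As changing the base orbit alters each $\ell_i$ by a bounded amount, the large limit set contains a singular point if and only if some sequence makes $|\ell_1-\ell_2|$ diverge. Hence $\Lambda_{\rho_1\times\rho_2}\subset\partial_{\infty}^{\max}(X_1\times X_2)_{\reg}$ if and only if $\sup_\eta|\ell_1(\eta)-\ell_2(\eta)|<\infty$, which is $(b)$.

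The implication $(b)\Rightarrow(c)$ is immediate: for an infinite-order $\gamma$ the isometry $\rho_i(\gamma)$ is loxodromic with $\tau(\rho_i(\gamma))=\lim_n\tfrac1n\ell_i(\gamma^n)$, so applying the bound $C$ of $(b)$ to $\eta=\gamma^n$ gives $|\tfrac1n\ell_1(\gamma^n)-\tfrac1n\ell_2(\gamma^n)|\le C/n\to0$, whence $\tau(\rho_1(\gamma))=\tau(\rho_2(\gamma))$; torsion elements have translation length $0$ on both sides.

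The heart of the proposition is $(c)\Rightarrow(b)$, a coarse form of marked length spectrum rigidity, and the step I expect to be the main obstacle. The difficulty is that $\ell_i(\eta)$ and $\tau(\rho_i(\eta))$ differ by the unbounded, $\eta$-dependent distance from $o_i$ to the axis of $\rho_i(\eta)$, and the two orbit maps are quasi-isometric embeddings with \emph{different} multiplicative constants, so $(c)$ cannot be used pointwise. I would argue through the boundary. Since the $\rho_i$ are convex cocompact, the orbit maps extend to equivariant homeomorphisms $f_i\colon\partial\Gamma\to\Lambda_{\rho_i}\subset\partial_{\infty}X_i$, and in each $\mathrm{CAT}(-1)$ factor one has, for infinite-order $\eta$ with fixed points $\eta^{\pm}\in\partial\Gamma$, the coarse identity
\[
\ell_i(\eta)=\tau(\rho_i(\eta))+2\,\bigl(f_i(\eta^+)\mid f_i(\eta^-)\bigr)_{o_i}+O(1),
\]
with error uniform because convex cocompactness gives a positive lower bound on translation lengths. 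By $(c)$ the translation-length terms agree, so $(b)$ reduces to showing that the Gromov products of the fixed-point pairs agree across the two representations up to a uniform constant. These I would recover from the marked length spectrum: for loxodromic $\alpha,\beta$ the asymptotics of $\tau(\rho_i(\alpha^m\beta^n))$ as $m,n\to\infty$ encode, up to bounded error, the Gromov products of the fixed points of $\alpha$ and $\beta$, so equality of all translation lengths forces the boundary correspondence $f_2\circ f_1^{-1}$ to preserve Gromov products of loxodromic fixed-point pairs up to a uniform additive constant, which is exactly what the displayed identity needs. The delicate points, which I would treat with care, are establishing the displayed identity and extracting the pairwise Gromov products from translation lengths of products with constants \emph{independent of} $\eta$; the torsion elements, for which the elliptic case replaces the axis picture, I would handle by a separate routine estimate.
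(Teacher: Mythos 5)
Your treatment of (a)$\Leftrightarrow$(b) and of (b)$\Rightarrow$(c) is correct and essentially the paper's own argument: regular versus singular accumulation of an orbit is detected by boundedness of $|d_1(\rho_1(\gamma_n)x,o_1)-d_2(\rho_2(\gamma_n)y,o_2)|$, the passage to arbitrary orbits is the content of Lemma~\ref{independenceoforbit}, and (b)$\Rightarrow$(c) is the same division-by-$n$ computation.

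The gap is in (c)$\Rightarrow$(b). The paper does not prove this implication from scratch: it transports the two metrics to $\Gamma$ via the orbit maps, checks (using quasi-convexity and Bourdon) that both are left-invariant and quasi-isometric to a word metric by the identity, and then invokes the Furman--Krat theorem (Theorem~\ref{equivalencescoarse}) asserting that for such metrics on a hyperbolic group equality of marked length spectra is equivalent to coarse equivalence. What you propose is instead an outline of a direct proof of that theorem, and it stops exactly where the difficulty is concentrated. Granting the coarse identity $\ell_i(\eta)=\tau(\rho_i(\eta))+2\bigl(f_i(\eta^+)\mid f_i(\eta^-)\bigr)_{o_i}+O(1)$ --- which does hold with uniform error once translation lengths are bounded below, as they are for a discrete quasi-convex group --- you still need a bound on $\bigl|(f_1(\eta^+)\mid f_1(\eta^-))_{o_1}-(f_2(\eta^+)\mid f_2(\eta^-))_{o_2}\bigr|$ that is uniform over all infinite-order $\eta\in\Gamma$, including those whose fixed points $\eta^{\pm}$ are very close in $\partial\Gamma$, i.e.\ whose Gromov products are arbitrarily large. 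The asymptotics of $\tau(\rho_i(\alpha^m\beta^n))$ yield, in the limit, equalities of cross-ratio-type combinations of \emph{four} boundary points, not of individual Gromov products of fixed-point pairs; upgrading ``all such combinations agree'' to ``each Gromov product agrees up to a single additive constant'' is precisely the content of the Furman/Krat result (and of the marked-length-spectrum-determines-the-cross-ratio circle of ideas). As written, your key step restates the theorem to be proved rather than proving it. Either carry out that uniform extraction in detail, or do as the paper does: reduce to two left-invariant metrics on $\Gamma$ and cite Furman and Krat, which makes (c)$\Rightarrow$(b) a one-line consequence of Proposition~\ref{ceandtranslationlength}.
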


If both representations $\rho_1$ and $\rho_2$, apart from being coarsely 
equivalent, are also cocompact, then the spaces $X_1$ and $X_2$ are 
\emph{almost-isometric}. This means that there exists an \emph{almost-isometry} between the spaces, 
which is a quasi-isometry with multiplicative constant one. 
This almost-isometry allows  to construct 
a coarse equivariant map between the regular parts of the ideal boundaries of $X_1\times X_1$ and $X_1\times X_2$,
so that the open set in $\partial_{\infty}^{\max}(X_1\times X_1)$ of Theorem~\ref{diagonalcase} 
is mapped to  an open set $\Omega^{\max}_{\Gamma} \subset \partial_{\infty}^{\max}(X_1\times X_2) $
with good properties:

\begin{theorem}\label{theoremcoarse} Let $X_1$, $X_2$ be proper 
$\mathrm{CAT}(-1)$ spaces and $\overline{X_1\times X_2}^{\max}$ the horofunction 
compactification with respect to $d_{\max}$. Let $\Gamma$ be a hyperbolic group 
and $\rho_1\colon \Gamma \rightarrow \operatorname{Isom} (X_1)$, $\rho_2\colon \Gamma \rightarrow 
\operatorname{Isom} (X_2)$ two cocompact discrete representations. If $\rho_1$ and $\rho_2$ are coarsely 
equivalent, then there exists an open subset $\Omega^{\max}_{\Gamma}\subset 
\partial_{\infty}^{\max}(X_1\times X_2)$ such that the product action of 
$\Gamma$ on $X_1\times X_2\cup \Omega^{\max}_{\Gamma}$ is properly discontinuous 
and cocompact.
\end{theorem}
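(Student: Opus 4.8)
The plan is to promote the diagonal Theorem~\ref{diagonalcase} for $X_1\times X_1$ to the product $X_1\times X_2$ by transport along an almost-isometry, as announced just before the statement. Since $\rho_1$ and $\rho_2$ are cocompact and discrete, the Milnor--\v{S}varc lemma makes each orbit map $\gamma\mapsto\rho_i(\gamma)o_i$ a quasi-isometry from $\Gamma$ (with a word metric) to $X_i$; composing the first with a quasi-inverse of the second produces a coarsely $\Gamma$-equivariant quasi-isometry $f\colon X_1\to X_2$, in the sense that $d_2\bigl(f(\rho_1(\gamma)x),\rho_2(\gamma)f(x)\bigr)$ is uniformly bounded. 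Coarse equivalence of $\rho_1,\rho_2$, which holds by hypothesis (equivalently by Proposition~\ref{theoremequivalences}), forces the multiplicative constant of $f$ to be $1$, so $f$ is an almost-isometry: $\lvert d_2(f(x),f(x'))-d_1(x,x')\rvert\le C$ for all $x,x'$. I would then observe that $F:=\mathrm{id}\times f\colon X_1\times X_1\to X_1\times X_2$ is again an almost-isometry for the respective $\max$ metrics, since the additive error survives taking a maximum, and that $F$ coarsely intertwines the diagonal action $\rho_1\times\rho_1$ with the product action $\rho_1\times\rho_2$.

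Next I would extend $F$ to the regular boundary and use it to define the domain. An almost-isometry of proper $\CAT(-1)$ spaces induces a homeomorphism $\partial f\colon\partial_{\infty}X_1\to\partial_{\infty}X_2$ and alters each Busemann function only by a bounded additive term that converges along rays. Reading this through the description of the regular part as maxima of two Busemann functions, homeomorphic to $\partial_{\infty}X_i\times\partial_{\infty}X_j\times\mathbb{R}$, the map $F$ extends to a homeomorphism
\[
\partial F\colon \partial_{\infty}^{\max}(X_1\times X_1)_{\reg}\longrightarrow \partial_{\infty}^{\max}(X_1\times X_2)_{\reg},\qquad (\xi,\eta,t)\longmapsto\bigl(\xi,\partial f(\eta),\,t+c(\xi,\eta)\bigr),
\]
with $c$ continuous; it is a homeomorphism onto its image because $\partial f$ is one. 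As $\rho_1$ is cocompact, the set $\Omega_{\mathrm{diag}}\subset\partial_{\infty}^{\max}(X_1\times X_1)$ provided by Theorem~\ref{diagonalcase} lies entirely in the regular part --- it is the space of parametrized geodesics of $X_1$ --- so I set
\[
\Omega^{\max}_{\Gamma}:=\partial F\bigl(\Omega_{\mathrm{diag}}\bigr)\subset\partial_{\infty}^{\max}(X_1\times X_2)_{\reg},
\]
an open set onto which $\partial F$ restricts to a homeomorphism.

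Finally I would transport the dynamics. Together, $F$ and $\partial F$ give a coarsely $\Gamma$-equivariant bijection between $X_1\times X_1\cup\Omega_{\mathrm{diag}}$ and $X_1\times X_2\cup\Omega^{\max}_{\Gamma}$ that is an almost-isometry on the interior and a homeomorphism on the ideal part. From this I would deduce proper discontinuity and cocompactness directly from Theorem~\ref{diagonalcase}: a sequence $\gamma_n\to\infty$ in $\Gamma$ that moves a compact set off itself in the diagonal model does the same in the product model, because a bounded additive error cannot cancel an escape to infinity, while a compact fundamental domain for the diagonal action is carried by $F$ to a set whose $\Gamma$-translates still cover $X_1\times X_2\cup\Omega^{\max}_{\Gamma}$.

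The step I expect to be the real obstacle is this last transport. Because $F$ is only a coarse equivalence of the interiors --- not an isometry, and not a homeomorphism of the full compactifications --- one cannot simply conjugate the action. The delicate point is to show that a sequence $p_n\in X_1\times X_2$ converges to a point of $\Omega^{\max}_{\Gamma}$ if and only if, under a coarse inverse of $F$, the associated sequence in $X_1\times X_1$ converges to the corresponding point of $\Omega_{\mathrm{diag}}$, and uniformly enough that the neighborhood bases used to verify proper discontinuity and cocompactness in the diagonal case survive. This reduces to the same continuity estimates that yield $\partial F$ in the second step, but now with basepoints sliding along $\Gamma$-orbits, so that the uniform boundedness of the errors of both $F$ and its coarse inverse is precisely what the horofunction topology on the regular part must absorb.
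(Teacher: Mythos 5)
Your overall strategy --- build an equivariant almost-isometry $f\colon X_1\to X_2$ from coarse equivalence plus cocompactness, and transport the diagonal result along $\mathrm{id}\times f$ --- is exactly the paper's, and your construction of $f$ and of the boundary homeomorphism $f_\infty=\partial f$ is fine. The gap is in the second step: you assert that $\mathrm{id}\times f$ extends to a \emph{homeomorphism} $\partial F$ of the regular parts of the form $(\xi,\eta,t)\mapsto(\xi,\partial f(\eta),t+c(\xi,\eta))$ with $c$ continuous, and the rest of your argument (conjugating the action, pushing forward a fundamental domain, transferring neighborhood bases) leans entirely on this. An almost-isometry only controls distances up to a bounded additive error, and that error need not converge along a ray: $d_2(f(g(n)),f(o))-d_1(g(n),o)$ is bounded by $K$ but can oscillate, so there is no canonical continuous choice of the correction on the $\mathbb{R}$-coordinate. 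The paper defines the analogous map $F\colon\Omega_1\to\Omega_2$ using a $\limsup$ in the third coordinate and explicitly warns that it ``may be non continuous''; your claim that the needed ``continuity estimates that yield $\partial F$'' exist is precisely what fails. You do sense in your last paragraph that the transport is the delicate point, but you locate the difficulty in the interior-to-boundary transition rather than in the non-existence of the continuous boundary extension itself.

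The paper's workaround is worth internalizing, because it is what actually closes the argument. First, the domain $\Omega_2$ is defined directly as $\varphi_2^{-1}\bigl((\partial_\infty X_1\times\partial_\infty X_2\setminus\Delta_{f_\infty})\times\mathbb{R}\bigr)$, using only the genuinely continuous map $f_\infty$ on the first two coordinates (this agrees set-theoretically with your $\partial F(\Omega_{\mathrm{diag}})$, so your domain is the right one). Second, proper discontinuity and cocompactness are proved not by conjugation but by purely coarse control of the height coordinate: one matches each point $y\in\Omega_2$ with a point $z\in\Omega_1$ having the same $(\xi,\eta,h)$-data and shows $\vert h_1(\gamma z)-h_2(\gamma y)\vert\le 6K$ uniformly in $\gamma$ (via the intermediate bounds $\vert h_2(F(z))-h_1(z)\vert\le K$ and $\vert h_2(F(\gamma z))-h_2(\gamma F(z))\vert\le 4K$), then runs a dynamical-relation argument on $\Omega_2$ and a separate lemma comparing accumulation of interior sequences $(x_n,y_n)$ and $(x_n,f(y_n))$. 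Bounded errors are enough for these arguments because proper discontinuity and cocompactness are coarse statements about the $h$-coordinate; continuity of the transport map is never needed and, in general, not available.
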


\section{Preliminaries}

A metric space is said to be \emph{proper} if all its closed balls are compact, and \emph{geodesic} if any two points can be joined by a geodesic segment.

A \emph{$\mathrm{CAT}(-1)$ space} $X$ is a geodesic metric space where triangles are thinner than comparison triangles in the hyperbolic plane. More precisely, for any two points $x,y$ in any geodesic triangle $\Delta\subset X$, and the corresponding comparison points $\overline{x}$, $\overline{y}$ in a comparison geodesic triangle $\overline{\Delta}\subset\mathbb{H}^2$:
$$ d_X(x,y) \leq d_{\mathbb{H}^2}(\overline{x}, \overline{y}).$$
Similarly, a $\mathrm{CAT}(0)$ space satisfies the same condition placing the 
comparison triangles in the euclidean plane. In particular,  $\mathrm{CAT}(-1)$ spaces are also $\mathrm{CAT}(0)$ 
spaces. A good reference for these spaces 
is \cite{BH}.

Two rays $c(t)$, $c'(t)$ in a metric space are said to be \emph{asymptotic} if there exists $C<\infty$ such that
 $d(c(t), c'(t)) \leq C$ for any $t\geq 0$. The \emph{visual 
boundary} $\partial_{\infty}X$ of a metric space $X$ is the set of equivalent 
classes of asymptotic rays. In a proper $\mathrm{CAT}(0)$ space $\overline{X} = X 
\cup \partial_{\infty}X$ can be given a topology (the cone topology, see 
\cite{BH}) such that both $\overline{X}$ and $\partial_{\infty}X$ are compact. 
$\overline{X}$ is known as the \emph{visual compactification}. 

A discrete group action on a topological space $X$ is \emph{properly discontinuous} if every 
compact set intersects finitely many of its translates. For proper metric spaces 
this is equivalent to the fact that every point has an open neighborhood which 
only intersects finitely many of its translates. The action is $\emph{cocompact}$ 
if there exists a compact set $K\subset X$ whose translates cover $X$.  
For $\Gamma$ a 
discrete group of the isometries of a proper $\mathrm{CAT}(0)$ space, the 
\emph{limit set} $\Lambda_{\Gamma}$ is defined as the set of accumulation points 
of an orbit on $\partial_{\infty}X$ and it is independent of the orbit. 
For a $\mathrm{CAT}(-1)$ space $X$, the 
complement of $\Lambda_{\Gamma}$ in $\partial_{\infty}X$ is the \emph{domain 
of discontinuity} $\Omega_{\Gamma}$ and $\Gamma$ acts properly discontinuously on 
$\Omega_{\Gamma}$  \cite{Coornaert2}.

The \emph{quasi-convex hull} of a set $S\subset\overline{X}$ is the union of all 
segments, rays, and geodesic lines of $X$ with endpoints in $S$. A set $S\subset 
\overline{X}$ is \emph{quasi-convex} if $S\cap X$   is contained in an epsilon 
neighborhood of its weak convex hull. A group $\Gamma$ of the isometries of a 
$\mathrm{CAT}(-1)$ space $X$ is \emph{quasi-convex} if it acts properly 
discontinuously on $X$ and any orbit is a quasi-convex set.

A \emph{quasi-isometric embedding} between metric spaces $X$, $Y$ is a map $f\colon X \rightarrow Y$ such that there exists constants $A\geq 1$ and $C\geq 0$ such that for all $x_1, x_2\in X$:
$$\frac{1}{A}d_X(x_1, x_2) - C \leq d_Y(f(x_1), f(x_2)) \leq Ad_X(x_1, x_2) + C.$$
If the map $f$ is also \emph{coarsely onto}, i.e. for each $y\in Y$ there exists $x\in X$ with:
$$d_Y(f(x), y) \leq C,$$
then it is a \emph{quasi-isometry}. An \emph{almost-isometry} is a quasi-isometry with multiplicative constant $A = 1$.

A quasi-convex group of isometries of a proper $\mathrm{CAT}(-1)$ space $X$ is hyperbolic and finitely generated. Moreover, the orbit map:
\begin{align*}
 \Gamma &\rightarrow X\\
 \gamma &\mapsto \gamma o
\end{align*}
is a quasi-isometric embedding for any $o\in X$ and it extends to an equivariant 
homeomorphism (which is also Lipschitz and quasi-conformal) from 
$\partial_{\infty}\Gamma$ to its limit set $\Lambda_{\Gamma}$, \cite{Bourdon}. 
The action of a quasi-convex group on $X\cup \Omega_{\Gamma}$ is properly 
discontinuous \cite{Coornaert2} and cocompact \cite{Swenson}.

\section{The max compactification}
\label{compactification}

Let $(X_1, d_1)$, $(X_2, d_2)$ be two proper $\mathrm{CAT}(-1)$ spaces, and consider the product space $X_1\times X_2$, together with the $\max$ metric $d_{\max}$ (or $\ell^{\infty}$ metric), which is defined as:
$$d_{\max}((x,y),(x',y'))= \max\{d_1(x,x'), d_2(y,y')\}$$
for any $(x,y)$, $(x',y')$ in $X_1\times X_2$. 
The metrics $d_{\max}$ and the product metric:
$$d_{\ell^2}((x,y),(x',y'))=\sqrt{d_1(x,x')^2 + d_2(y,y')^2}$$ are comparable so they induce the same topology in $X_1\times X_2$. 
For $X_1$ and $X_2$  proper geodesic spaces, $(X_1\times X_2, d_{\max})$ is also a proper geodesic space 
\cite[Prop.2.6.6]{Papadopoulos}. 
In this section we compute the horofunction compactification of $X_1\times X_2$ with respect to the metric $d_{\max}$.  

Let $X$ be a proper metric space and $C_*(X)$ its space of continuous functions 
(with the topology of uniform convergence on compact sets) modulo the additive 
action of the subspace of constant functions. The \emph{Gromov} or \emph{horofunction compactification} 
of $X$ is the closure in $C_*(X)$ of the image of the map:
 \begin{align*}
\iota\colon  X &\rightarrow C_*(X)\\
 x &\mapsto [d(x, \cdot)],
\end{align*}
which assigns to each point  $x$ in $X$  the class in $C_*(X)$ of the distance function with respect to this point, 
see \cite{BallmannGromov}. We denote the horofunction compactification of $X$ by $\overline{X}$. The ideal boundary, denoted by $\partial_{\infty}X$, is the set $\overline{X}\setminus \iota(X)$. For a proper metric space both  $\overline{X}$ and  $\partial_{\infty}X$ are compact and metrizable spaces. 

\begin{remark}\label{convergence}
Fixing a base point $o\in X$, the sequence $[d(x_n, \cdot)]$  converges to a class of functions $[f]\in C_{*}(X)$ if and only if the sequence of corresponding normalized distance functions, $d(x_n, \cdot) -d(x_n, o)$, converges to $f-f(o)$ uniformly on all balls $B(o, r)$. In fact, $C_{*}(X)$ is homeomorphic to the subspace of $C(X)$ of continuous functions with $f(o)=0$.
\end{remark}
\begin{definition}
A \emph{horofunction} $h$ is a continuous function in $C(X)$ such that its class $[h]$ belongs to $\partial_{\infty}X$. 
\end{definition}
\begin{remark}
We will usually call a class of horofunctions an ideal point and denote it by $\xi$. Then the horofunction $h$ in the class $\xi$ satisfying $h(o)=0$ is denoted by $h^o_{\xi}$.
\end{remark}

\begin{notation} When we say that a sequence $(x_n)_n$ converges to an ideal point $\xi$ in the horofunction compactification, $x_n \rightarrow \xi$, we mean that for a base point $o\in X$ the corresponding sequence of normalized distance functions converges uniformly on compact sets to the horofunction $h^o_{\xi}$.
\end{notation}

The level sets of a horofunction are known as \emph{horospheres} and the sublevel sets as \emph{horoballs}. Observe that two horofunctions in the same equivalence class differ by a constant and share the same set of horospheres and horoballs. The horofunctions of a proper $\text{CAT}(0)$ space are Busemann functions:  

\begin{definition}
A \emph{Busemann} function in a metric space $(X,d)$ is a function defined as:
$$z \mapsto \lim_{t\rightarrow+\infty} d(c(t), z) - t$$
for some geodesic ray $c(t)$ in $X$. 
\end{definition}

In a proper $\mathrm{CAT}(0)$ space $X$, given a point $o\in X$ and an ideal point $\xi\in\partial_{\infty}X$ there is a unique 
ray $c(t)$ such that $c(0)=o$ and its associated Busemann function is in the class $\xi$. This Busemann function is denoted by
$$
\beta^o_{\xi}(z)=\lim_{t\rightarrow+\infty} d(c(t), z) - t  ,
$$
%
The horofunction compactification and the visual compactification of a proper $\mathrm{CAT}(0)$ space are equivalent \cite[Cor.~8.20]{BH}. 

\begin{lemma}[cf.~\cite{BH}]
\label{lemma:propertiesbusemann}
For a $\CAT(0)$ space $X$:
 \begin{enumerate}[label=(\roman*)]
 \item If $\sigma\colon[0,+\infty)\to X$ is a ray in the class $\xi\in\partial_\infty X$,
then $\beta^o_\eta(\sigma(s))$ converges to $+\infty$ if $\eta\neq\xi$ and to $-\infty$ if $\eta = \xi$.
\item For any $p,q\in X$,   $\beta^p_\eta-\beta^q_\eta$ is a constant function. 
\end{enumerate}
\end{lemma}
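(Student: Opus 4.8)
The plan is to treat the three assertions---(ii), and the two cases of (i)---separately, exploiting that the Busemann functions $\beta^p_\eta$ and $\beta^q_\eta$ attached to a fixed ideal point $\eta$ come from the rays $c^p_\eta$ and $c^q_\eta$, which share the endpoint $\eta$ and are therefore asymptotic. For (ii) I would argue that, by the definition of $\beta^o_\xi$ together with the identification of the visual and horofunction compactifications recalled above (\cite[Cor.~8.20]{BH}), both $\beta^p_\eta$ and $\beta^q_\eta$ represent the single class $\eta\in\partial_{\infty}X\subset C_*(X)$; since points of $C_*(X)$ are continuous functions modulo additive constants, two representatives of the same class differ by a constant, which is exactly (ii). If a self-contained argument is wanted, one checks directly from the convexity and $1$-Lipschitz property of Busemann functions that $s\mapsto\beta^p_\eta\bigl(c^q_\eta(s)\bigr)$ is affine of slope $-1$, i.e. $\beta^p_\eta\bigl(c^q_\eta(s)\bigr)=\beta^p_\eta(q)-s$, together with the analogous statement obtained by exchanging $p$ and $q$; comparing the two forces $\beta^p_\eta-\beta^q_\eta$ to be constant.

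For case (i) with $\eta=\xi$ I would reduce to (ii). By uniqueness of the ray issuing from $\sigma(0)$ in a prescribed class, the ray $\sigma$ is itself $c^{\sigma(0)}_\eta$, so $\beta^{\sigma(0)}_\eta$ is the Busemann function of $\sigma$ and satisfies $\beta^{\sigma(0)}_\eta(\sigma(s))=\lim_{t\to\infty}\bigl(d(\sigma(s),\sigma(t))-t\bigr)=-s$. Part (ii) then gives $\beta^o_\eta(\sigma(s))=-s+\mathrm{const}$, which tends to $-\infty$. This case uses only the $\CAT(0)$ structure.

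For case (i) with $\eta\neq\xi$, the plan is to rewrite $\beta^o_\eta$ through the Gromov product $(x\,|\,y)_o=\tfrac12\bigl(d(o,x)+d(o,y)-d(x,y)\bigr)$. Taking $x=c^o_\eta(t)$ and using $d(o,c^o_\eta(t))=t$ together with the monotonicity in $t$ of $d(c^o_\eta(t),z)-t$, the quantity $(c^o_\eta(t)\,|\,z)_o$ is non-decreasing and converges, yielding the identity $\beta^o_\eta(z)=d(o,z)-2\,(\eta\,|\,z)_o$, which holds in any $\CAT(0)$ space. Evaluating at $z=\sigma(s)$, the term $d(o,\sigma(s))$ diverges to $+\infty$ as $s\to\infty$, so the desired conclusion $\beta^o_\eta(\sigma(s))\to+\infty$ reduces to showing that $(\eta\,|\,\sigma(s))_o$ remains bounded as $s\to\infty$.

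I expect this last boundedness to be the main obstacle, and it is the one point where curvature really enters: it asserts that the geodesics from $o$ toward $\eta$ and toward $\sigma(s)$ fellow-travel only for a uniformly bounded time. For the proper $\CAT(-1)$ spaces in play here, distinct ideal points have finite Gromov product and $(\eta\,|\,\sigma(s))_o$ converges to $(\eta\,|\,\xi)_o<\infty$ up to a bounded error (see \cite{Bourdon}), so the claim follows; by contrast, in a flat $\CAT(0)$ space the product can grow without bound along rays making an acute Tits angle with $\eta$, so some such input is genuinely needed. Once this bound is in hand, the remaining steps are routine bookkeeping with the Busemann identity above.
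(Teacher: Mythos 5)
The paper never proves this lemma---it is quoted with a bare pointer to \cite{BH}---so there is no in-paper argument to compare against; judged on its own, your proof is essentially correct, and it does more than the paper asks. Part (ii) via the identification of the visual and horofunction compactifications \cite[Cor.~8.20]{BH} is complete: both $\beta^p_\eta$ and $\beta^q_\eta$ arise as limits in $C_*(X)$ of normalized distance functions along sequences converging to $\eta$, hence represent the same class and differ by a constant. (Your self-contained fallback has a small gap: the slope~$-1$ computations along $c^p_\eta$ and $c^q_\eta$ give constancy of $\beta^p_\eta-\beta^q_\eta$ only along those two rays, not on all of $X$; running the same computation along the ray $c^z_\eta$ from an arbitrary $z$ shows the difference is constant along every ray into $\eta$, but gluing these constants together still requires an extra argument, so the compactification route is the one to keep.) The case $\eta=\xi$ of (i), reduced to (ii) via uniqueness of the ray from $\sigma(0)$, is fine in full $\CAT(0)$ generality, as is the identity $\beta^o_\eta(z)=d(o,z)-2(\eta\,|\,z)_o$ obtained from monotone convergence of $(c^o_\eta(t)\,|\,z)_o$.

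Your final observation is the important one, and it is a defect of the lemma as printed, not of your proof: statement (i) with $\eta\neq\xi$ is \emph{false} for general $\CAT(0)$ spaces. In the Euclidean plane, with $\sigma$ a ray making angle $\theta\in(0,\pi/2)$ with the ray defining $\eta$, one has $\beta^o_\eta(\sigma(s))=-s\cos\theta\to-\infty$, so boundedness of $(\eta\,|\,\sigma(s))_o$ genuinely fails and no argument can close the gap at this level of generality. The paper only ever applies the lemma to proper $\CAT(-1)$ factors (in the proofs of Propositions~\ref{homeo regular} and~\ref{joinprop}), and there your reduction does close: by continuity of the extended Gromov product \cite[Prop.~3.4.2]{Buyalo-Schroeder}---which the paper itself invokes later---$(\eta\,|\,\sigma(s))_o\to(\eta\,|\,\xi)_o<\infty$ for $\eta\neq\xi$, whence $\beta^o_\eta(\sigma(s))\geq d(o,\sigma(s))-2\sup_s(\eta\,|\,\sigma(s))_o\to+\infty$. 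So your proof is valid exactly in the setting in which the lemma is used, and the lemma's hypothesis should read $\CAT(-1)$ (or a visibility assumption) rather than $\CAT(0)$, at least for the $\eta\neq\xi$ clause.
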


The Gromov or  horofunction compactification of $(X_1 \times X_2, d_{\max})$ is denoted by $\overline{X_1\times X_2}^{\max}$ and its ideal boundary by $\partial_{\infty}^{\max}(X_1\times X_2)$.
We choose a base point $O=(o, o')$ with $o \in X_1$ and $o'\in X_2$ and then, as a representative of a class of normalized distance functions, we have the function: 
\begin{eqnarray*}
d_{\max}^{O}((x,y), \cdot) &=& d_{\max}((x,y), \cdot) - d_{\max}((x,y), (o,o'))\\ &=& \max\{d_1(x,\cdot), d_2(y, \cdot)\} - \max\{d_1(x,o), d_2(y,o')\}.
\end{eqnarray*}
Then, by Remark \ref{convergence}, $[d_{\max}((x_n, y_n), \cdot)]\rightarrow \xi\in\partial_{\infty}^{\max}(X_1\times X_2)$ if and only if $d_{\max}^{O}((x_n,y_n), \cdot) \rightarrow h^{O}_{\xi}$, where $h^{O}_{\xi}$ is the horofunction in the class $\xi$ that satisfies $h^{O}_{\xi}(O)=0$.

Given a diverging sequence  $(x_n, y_n) \subset (X_1 \times X_2, d_{\max})$,
we distinguish two cases, up to subsequence:
\begin{enumerate}[label=(\Roman*)]
 \item either $\vert d_1(x_n, o) - d_2(y_n, o') \vert\to \infty$,
 \item or $\vert d_1(x_n, o) - d_2(y_n, o') \vert$ remains bounded.
\end{enumerate}
Notice that if one of $d_1(x_n, o)$ or $d_2(y_n, o') $ is bounded, then we are in the first case, as we assume that  $(x_n, y_n) $     diverges.

\begin{proposition}
 \label{boundary points}
Let $(x_n, y_n) \subset (X_1 \times X_2, d_{\max})$ be a diverging sequence.  
\begin{enumerate}[label=(\Roman*)]
 \item If $\vert d_1(x_n, o) - d_2(y_n, o') \vert\to \infty$, then, up to a subsequence and permuting the coordinates,  there exists $\xi\in\partial_\infty X_1$ such that
 $$\lim_{n\rightarrow \infty} d_{\max}^{O}((x_n,y_n),(z,z'))  = \beta^{o}_{\xi}(z).$$
  \item If $\vert d_1(x_n, o) - d_2(y_n, o') \vert$ remains bounded, then, up to a subsequence,  there exist $\xi\in\partial_\infty X_1$ and $\xi'\in\partial_\infty X_2$ such that
  $$\lim_{n\rightarrow \infty} d_{\max}^{O}((x_n,y_n),(z,z'))=  
      \max\{\beta^{o}_{\xi}(z), \beta^{o'}_{\xi'}(z')-C\},
  $$ for some $C\in\mathbb{R}$.
\end{enumerate}
\end{proposition}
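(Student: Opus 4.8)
The plan is to reduce everything to the behaviour of the normalized distance function
\[
d_{\max}^{O}((x_n,y_n),(z,z'))=\max\{d_1(x_n,z),d_2(y_n,z')\}-\max\{a_n,b_n\},
\]
where I abbreviate $a_n=d_1(x_n,o)$ and $b_n=d_2(y_n,o')$. By Remark~\ref{convergence} it suffices to establish the claimed limits as uniform limits on every ball $B(O,r)$, i.e.\ for $z\in B(o,r)$ and $z'\in B(o',r)$. Since $(x_n,y_n)$ diverges we have $\max\{a_n,b_n\}\to\infty$. As $\overline{X_1}$ and $\overline{X_2}$ are compact (the spaces being proper) and their visual and horofunction compactifications agree, after passing to a subsequence I may assume $x_n\to\xi\in\overline{X_1}$ and $y_n\to\xi'\in\overline{X_2}$; whenever a factor diverges, the corresponding normalized distance $d_1(x_n,\cdot)-a_n$ (resp.\ $d_2(y_n,\cdot)-b_n$) converges uniformly on balls to the Busemann function $\beta^o_\xi$ (resp.\ $\beta^{o'}_{\xi'}$) by the equivalence of the two compactifications together with Lemma~\ref{lemma:propertiesbusemann}.

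For case (I), after permuting the coordinates I assume $a_n-b_n\to+\infty$; as $b_n\ge 0$ this forces $a_n\to\infty$, so $x_n$ diverges and $\xi\in\partial_\infty X_1$. The point is that the $X_1$-coordinate dominates uniformly on $B(O,r)$: for such $z,z'$,
\[
d_1(x_n,z)-d_2(y_n,z')\ \ge\ (a_n-d_1(o,z))-(b_n+d_2(o',z'))\ \ge\ (a_n-b_n)-2r,
\]
which tends to $+\infty$ independently of $(z,z')$. Hence for $n$ large both maxima are attained on the first factor, so $d_{\max}^{O}((x_n,y_n),(z,z'))=d_1(x_n,z)-a_n$ on the whole ball, and this converges uniformly to $\beta^o_\xi(z)$.

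For case (II), boundedness of $|a_n-b_n|$ together with $\max\{a_n,b_n\}\to\infty$ forces both $a_n\to\infty$ and $b_n\to\infty$, so $\xi\in\partial_\infty X_1$ and $\xi'\in\partial_\infty X_2$ are both genuine ideal points. After a further subsequence I arrange $a_n-b_n\to C$. Writing $f_n=d_1(x_n,\cdot)-a_n$ and $g_n=d_2(y_n,\cdot)-b_n$ and factoring $b_n$ out of both maxima gives
\[
d_{\max}^{O}((x_n,y_n),(z,z'))=\max\{f_n(z)+(a_n-b_n),\,g_n(z')\}-\max\{a_n-b_n,0\}.
\]
Since $\max$ is $1$-Lipschitz and $f_n\to\beta^o_\xi$, $g_n\to\beta^{o'}_{\xi'}$ uniformly on balls while $a_n-b_n\to C$, the right-hand side converges uniformly on $B(O,r)$ to $\max\{\beta^o_\xi(z)+C,\beta^{o'}_{\xi'}(z')\}-\max\{C,0\}$, which for $C\ge 0$ is exactly $\max\{\beta^o_\xi(z),\beta^{o'}_{\xi'}(z')-C\}$; for $C<0$ the same expression represents the asserted ideal point after adding the constant $-C$ (equivalently, relabelling the factors), so the limit has the stated form with $C=\lim_n(d_1(x_n,o)-d_2(y_n,o'))$.

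I expect the main obstacle to be the uniformity in both cases: in (I), pinning down that a single estimate makes the $X_1$-factor dominate simultaneously at every point of a ball, and in (II), propagating the uniform-on-balls convergence of the single-factor Busemann functions through the two nested maxima. Both rest on the fact, coming from the equivalence of the visual and horofunction compactifications, that a diverging sequence in one factor has normalized distances converging to a Busemann function uniformly on balls; granting this, the interplay of the two $\max$'s and the bookkeeping of the additive constant $C$ are purely formal.
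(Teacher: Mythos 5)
Your proof is correct and follows essentially the same route as the paper: normalize the two factors separately, use that each normalized distance $d_i(x_n,\cdot)-d_i(x_n,o)$ converges uniformly on balls to a Busemann function (after passing to a subsequence, via the equivalence of the visual and horofunction compactifications), and pass the outer $\max$ to the limit while tracking the constant $C_n=d_1(x_n,o)-d_2(y_n,o')$. The only differences are cosmetic — you normalize by $b_n$ and keep $\max\{C_n,0\}$ explicit where the paper assumes $C_n\ge 0$ after permuting factors, and you write out the domination estimate for case (I), which the paper dismisses as "similar".
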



\begin{proof}
We do the proof for case (II), the other  case can be obtained in a similar fashion. For each $n$, denote $C_n= d_1(x_n,o) - d_2(y_n,o')$ and assume 
(after permuting the factors) that
$C_n\geq 0$. Then, $d_{\max}^{O}((x_n,y_n),(z,z'))$ can be rewritten as:
\begin{multline*}
d_{\max}((x_n, y_n), (x,y))- d_{\max}((x_n,y_n), (o,o')) =\\  \max\{d_1(x_n,x) - d_1(x_n,o), d_2(y_n,y)-d_2(y_n,o')-C_n\}.
\end{multline*}
Using that the space is proper, that the maximum is continuous, and that $d_1(x_n,x) - 
d_1(x_n,o) \rightarrow \beta^o_{\xi} $ and
there exist $\xi\in\partial_\infty X_1$ and $\xi'\in\partial_\infty X_2$ such that 
$d_2(y_n,y)-d_2(y_n,o')-C_n 
\rightarrow \beta^{o'}_{\xi'}-C $ uniformly on compact sets of $X_1$ and $X_2$, 
respectively:
\begin{multline*}
\max\{d_1(x_n,x) - d_1(x_n,o), d_2(y_n,y)-d_2(y_n,o')-C_n\} \rightarrow\max\{\beta^o_{\xi}, \beta^{o'}_{\xi'}-C\}
\end{multline*}
uniformly on compact sets of $X_1\times X_2$.
\end{proof}
We call the set of ideal boundary points in $\partial^{\mathrm{max}}_\infty (X_1\times X_2$
with a representative of the form $\beta^{o}_{\xi}(z)$ or $\beta^{o'}_{\xi'}(z')$, i.e.~case (I), the 
\emph{singular part} of the boundary and we denote it by $\partial_{\infty}^{\max}(X_1 \times X_2)_{\sing}$. 
The rest of the ideal boundary  points, those with a representative of the form $\max\{\beta^{o}_{\xi}(z), \beta^{o'}_{\xi'}(z')+C\}$ with $C\in\mathbb{R}$, i.e.~case (II), define the \emph{regular part} of the boundary and we 
denote this set by $\partial_{\infty}^{\max}(X_1 \times X_2)_{\reg}$.

Using that the set of Busemann functions in one factor is naturally identified to the boundary of this factor, we have:

\begin{proposition}
\label{homeo singular}
There is a natural homeomorphism 
$$\varphi_{\sing}\colon  \partial_{\infty}^{\max}(X_1 \times X_2)_{\sing} \longrightarrow \partial_{\infty}X_1 \sqcup \partial_{\infty} X_2 $$
that consists in associating to a Busemann function that takes values only in the first (second) factor of $X_1 \times X_2$ the same Busemann function viewed as a point of the first (second)  
summand in $\partial_{\infty}X_1 \sqcup \partial_{\infty} X_2$.
\end{proposition}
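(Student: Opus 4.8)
The plan is to construct the map $\varphi_{\sing}$ explicitly, show it is well-defined, bijective, continuous, and has continuous inverse, using the characterization of the singular boundary points from Proposition~\ref{boundary points}(I) together with the description of the horofunction topology in Remark~\ref{convergence}. By definition, a point of $\partial_\infty^{\max}(X_1\times X_2)_{\sing}$ has a representative horofunction of the form $(z,z')\mapsto \beta^o_\xi(z)$ for some $\xi\in\partial_\infty X_1$, or of the form $(z,z')\mapsto \beta^{o'}_{\xi'}(z')$ for some $\xi'\in\partial_\infty X_2$. I would first record the (essentially tautological but necessary) fact that these two families of horofunctions are disjoint: a function that depends only on the first coordinate and goes to $-\infty$ along a ray in $X_1$ (by Lemma~\ref{lemma:propertiesbusemann}(i)) cannot coincide, up to an additive constant, with one that is constant in the first coordinate and depends only on the second, since $X_1$ is unbounded. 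This gives the disjoint-union target and defines $\varphi_{\sing}$ by sending $[\beta^o_\xi]\mapsto\xi\in\partial_\infty X_1$ and $[\beta^{o'}_{\xi'}]\mapsto\xi'\in\partial_\infty X_2$.

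Next I would check that $\varphi_{\sing}$ is well-defined and injective. Well-definedness amounts to: if two Busemann functions $\beta^o_\xi$ and $\beta^o_\eta$ in the first factor differ by an additive constant, then $\xi=\eta$. This follows from Lemma~\ref{lemma:propertiesbusemann}(i): evaluating along a ray $\sigma$ representing $\xi$, the function $\beta^o_\xi(\sigma(s))\to-\infty$ while $\beta^o_\eta(\sigma(s))\to+\infty$ for $\eta\neq\xi$, so they cannot differ by a constant unless $\xi=\eta$. Injectivity is the same statement. Surjectivity is immediate: every $\xi\in\partial_\infty X_1$ yields the Busemann function $\beta^o_\xi$ on $X_1$, which pulled back to $X_1\times X_2$ (ignoring the second coordinate) is a singular boundary point mapping to $\xi$, and symmetrically for $X_2$. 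So $\varphi_{\sing}$ is a bijection onto $\partial_\infty X_1\sqcup\partial_\infty X_2$.

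The substantive part is continuity of $\varphi_{\sing}$ and of its inverse. For continuity, suppose $\xi_n\to\xi$ in $\partial_\infty^{\max}(X_1\times X_2)_{\sing}$, say all in the first-factor family with representatives $\beta^o_{\xi_n}$ converging uniformly on compact sets of $X_1\times X_2$ to $\beta^o_\xi$; restricting to compact sets of the form $K\times\{o'\}$ gives uniform convergence of $\beta^o_{\xi_n}\to\beta^o_\xi$ on compacta of $X_1$, which by Remark~\ref{convergence} means precisely $\xi_n\to\xi$ in $\partial_\infty X_1$. The one point requiring care is that a convergent sequence of singular points cannot jump between the two summands in the limit: a sequence in the first-factor family has all limit horofunctions independent of the second coordinate, so the limit, if singular, stays in the first summand—this is exactly where the disjointness observation above is used, and I expect it to be the main (though mild) obstacle, since one must rule out the limit landing in the regular part or in the other summand. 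Continuity of the inverse is the reverse implication in Remark~\ref{convergence}: if $\xi_n\to\xi$ in $\partial_\infty X_1$, then $\beta^o_{\xi_n}\to\beta^o_\xi$ uniformly on compacta of $X_1$, and since these horofunctions are constant in the second coordinate, the convergence is automatically uniform on compacta of $X_1\times X_2$, giving convergence of the corresponding singular points. Handling the two summands symmetrically and assembling them via the disjoint-union topology completes the argument, the naturality in the statement being the fact that $\varphi_{\sing}$ simply reinterprets a one-factor Busemann function as a point of the corresponding boundary.
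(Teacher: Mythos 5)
Your proposal is correct; the paper actually gives no proof of this proposition at all, presenting it as immediate from the observation that Busemann functions depending on only one factor are naturally identified with ideal points of that factor, which is exactly the identification you spell out. Your verification of well-definedness, disjointness of the two families, and bicontinuity via uniform convergence on compact sets (restricting to slices $K\times\{o'\}$ and using Lemma~\ref{lemma:propertiesbusemann} and Remark~\ref{convergence}) is a sound elaboration of the routine details the authors chose to omit.
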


For the regular part, notice that we can get rid of the additive constant in Proposition~\ref{boundary points} by changing the base point. Thus
 regular points are  the 
classes modulo constant of the functions $\max\{\beta^p_{\xi}(z), 
\beta^{p'}_{\xi'}(z')\}$ for all $p\in X_1$, $p'\in X_2$, $\xi\in 
\partial_{\infty}X_1$ and $\xi'\in \partial_{\infty}X_2$.

\begin{proposition}
\label{homeo regular}
For each choice of base point $(o,o')\in X_1\times X_2$ there is a natural homeomorphism 
\begin{equation}
\label{homeoreg}
\begin{array}{r@{}l}
\varphi_{\reg}\colon  \partial_{\infty}^{\max}(X_1 \times X_2)_{\reg} &{}\longrightarrow  \partial_{\infty} X_1 \times \partial_{\infty} X_2\times \mathbb{R} \\ 
\left[ \max\{\beta_{\xi}^{p}(z), \beta^{p'}_{\xi'}(z')\}\right]  &{} \mapsto (\xi, \xi', \beta^{p}_{\xi}(o)-\beta^{p'}_{\xi'}(o')).
\end{array}
\end{equation}
\end{proposition}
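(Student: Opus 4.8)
The plan is to fix the base point $O=(o,o')$, produce an explicit inverse $\Psi$ of $\varphi_{\reg}$ at the level of normalized horofunctions, and then check that both $\varphi_{\reg}$ and $\Psi$ are continuous; one cannot invoke compactness of the domain to obtain one direction for free, because of the non-compact $\mathbb{R}$–factor. First I would record the normal form of a regular point. By Proposition~\ref{boundary points}(II) every regular class is represented by $\max\{\beta^o_\xi(z),\beta^{o'}_{\xi'}(z')-C\}$, and by Lemma~\ref{lemma:propertiesbusemann}(ii) changing the base point of a Busemann function alters it only by a constant, $\beta^p_\xi=\beta^o_\xi+\beta^p_\xi(o)$. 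Hence, modulo constants, a regular class is exactly the class of $\max\{\beta^o_\xi(z),\beta^{o'}_{\xi'}(z')-t\}$ for a unique $t\in\mathbb{R}$, and the third coordinate $\beta^p_\xi(o)-\beta^{p'}_{\xi'}(o')$ of the proposition equals this $t$ for any representative $\max\{\beta^p_\xi,\beta^{p'}_{\xi'}\}$ written without an additive constant; such a representative exists for every $t$, realized by pushing $p$ toward $\xi$ when $t\ge 0$ and $p'$ toward $\xi'$ when $t\le 0$. I would therefore set $\Psi(\xi,\xi',t)=[\max\{\beta^o_\xi(z),\beta^{o'}_{\xi'}(z')-t\}]$.

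Next I would check that $\varphi_{\reg}$ and $\Psi$ are mutually inverse bijections, which is essentially the well-definedness of $\varphi_{\reg}$. The ideal points $\xi,\xi'$ are intrinsic to the class: restricting a regular horofunction to the slice $\{o\}\times X_2$ yields, as a function of $z'$, a maximum of a constant and a translate of $\beta^{o'}_{\xi'}$, whose direction of descent to $-\infty$ recovers $\xi'$, and symmetrically the slice $X_1\times\{o'\}$ recovers $\xi$. Thus the class determines both $\beta^o_\xi$ and $\beta^{o'}_{\xi'}$ up to a common additive constant. Concretely, if $\max\{\beta^o_\xi,\beta^{o'}_{\xi'}\}$ and $\max\{\beta^o_\xi+a,\beta^{o'}_{\xi'}+b\}$ differ by a constant $k$, then comparing the two regions where each factor dominates (the first factor dominates where the second coordinate runs toward $\xi'$, and conversely) forces $a=b=k$, so $t=\beta^p_\xi(o)-\beta^{p'}_{\xi'}(o')$ is independent of the representative. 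Together with the normal form above, this shows $\varphi_{\reg}$ is well defined and that $\Psi$ is its two-sided inverse.

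For the continuity of $\Psi$ I would use that, for a single proper $\mathrm{CAT}(0)$ factor, the assignment $\xi\mapsto\beta^o_\xi$ is continuous from $\partial_\infty X_i$ into $C(X_i)$ with the topology of uniform convergence on compacta, since the horofunction and visual boundaries agree \cite[Cor.~8.20]{BH} and convergence of boundary classes is uniform convergence on balls of the normalized functions (Remark~\ref{convergence}). Thus if $(\xi_n,\xi'_n,t_n)\to(\xi,\xi',t)$, then $\beta^o_{\xi_n}\to\beta^o_\xi$ and $\beta^{o'}_{\xi'_n}\to\beta^{o'}_{\xi'}$ uniformly on compacta and $t_n\to t$; since $\max$ and subtraction are continuous, $\max\{\beta^o_{\xi_n}(z),\beta^{o'}_{\xi'_n}(z')-t_n\}\to\max\{\beta^o_\xi(z),\beta^{o'}_{\xi'}(z')-t\}$ uniformly on compacta of $X_1\times X_2$. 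By Remark~\ref{convergence} this is precisely convergence in $\partial_\infty^{\max}(X_1\times X_2)$, so $\Psi$ is continuous.

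Finally, for the continuity of $\varphi_{\reg}$ I would argue by subsequences, exploiting the compactness of $\partial_\infty X_1$ and $\partial_\infty X_2$. Let $\zeta_n=\Psi(\xi_n,\xi'_n,t_n)\to\zeta=\Psi(\xi,\xi',t)$. Passing to a subsequence, $\xi_n\to\eta\in\partial_\infty X_1$ and $\xi'_n\to\eta'\in\partial_\infty X_2$. The key point, and the main obstacle, is to rule out $t_n\to\pm\infty$: if $t_n\to+\infty$ then $\beta^{o'}_{\xi'_n}(z')-t_n\to-\infty$ uniformly on compacta and the normalized representatives converge to $\beta^o_\eta$, a singular horofunction, contradicting that $\zeta$ is regular; the case $t_n\to-\infty$ is excluded symmetrically. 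Hence $t_n$ is bounded and, along a further subsequence, $t_n\to\tau\in\mathbb{R}$. By the continuity of $\Psi$ just proved, $\zeta_n\to\Psi(\eta,\eta',\tau)$, so $\Psi(\eta,\eta',\tau)=\zeta=\Psi(\xi,\xi',t)$, and injectivity of $\Psi$ forces $(\eta,\eta',\tau)=(\xi,\xi',t)$. As every subsequence admits a further subsequence converging to the same limit $(\xi,\xi',t)$, the whole sequence converges, so $\varphi_{\reg}$ is continuous. Combined with the bijection and the continuity of $\Psi$, this proves that $\varphi_{\reg}$ is a homeomorphism.
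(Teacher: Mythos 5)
Your proof is correct and follows essentially the same strategy as the paper's: reduce to the normal form $[\max\{\beta^o_\xi,\beta^{o'}_{\xi'}-t\}]$, establish well-definedness and injectivity from the asymptotic behaviour of the horofunction (you read off $\xi,\xi'$ from slices, the paper evaluates along rays $(c(t),c'(t))$ in the product --- same substance), get continuity of the inverse from uniform convergence on compacta, and prove continuity of $\varphi_{\reg}$ by a compactness-plus-injectivity subsequence argument. The only notable difference is in the third coordinate: the paper asserts outright that it is continuous to get $C_n\to C$, whereas you obtain boundedness of $t_n$ by excluding $t_n\to\pm\infty$ (a diverging $t_n$ would force the limit class to be singular, the argument the paper uses later in Proposition~\ref{joinprop}) and then conclude by injectivity; this is a slightly more careful route to the same conclusion.
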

\begin{remark}
\label{homeo regular with a base point}
If we fix $p=o$ and $p'=o'$,  then homeomorphism \eqref{homeoreg} takes the form:
 \begin{equation}
\label{homeobasepoint}
\begin{array}{r@{}l}
\varphi_{\reg}\colon  \partial_{\infty}^{\max}(X_1 \times X_2)_{\reg} &{}\longrightarrow  \partial_{\infty} X_1 \times \partial_{\infty} X_2\times \mathbb{R} \\ 
\max\{\beta^{o}_{\xi}(z), \beta^{o'}_{\xi'}(z')-C\} &{}\mapsto (\xi, \xi', C)
\end{array}
\end{equation}
where $C\in\mathbb R$.
\end{remark}
\begin{proof}[Proof of Prop.~\ref{homeo regular}]

To prove that the map \eqref{homeobasepoint} is well defined, consider rays $c\colon [0,+\infty)\to X_1$   with $c(0)=p$ and 
 $c'\colon [0,+\infty)\to X_2$  with $c'(0)=p'$. Then, by   Lemma~\ref{lemma:propertiesbusemann}
$\lim_{t\to{+\infty}}\max\{\beta_{\xi}^{p}(c(t)), \beta^{p'}_{\xi'}(c'(t))\}=-\infty$ if and only if $c(+\infty)=\xi$ and $c'(+\infty)=\xi'$;
otherwise this limit is $+\infty$. 
From this property it easily follows that $\varphi_{\reg}$ is well defined and injective. In addition, surjectivity follows easily from 
formula \eqref{homeobasepoint} and the properties of Busemann functions (Lemma~\ref{lemma:propertiesbusemann}).
%
Again using formula~\eqref{homeobasepoint}, continuity of $\varphi_{\reg}^{-1}$ follows from construction. 

To prove continuity of $\varphi_{\reg}$, 
as ideal boundaries are metrizable, we use sequences. Let $\max\{\beta^{o}_{\xi_n}(z), \beta^{o'}_{\xi'_n}(z')-C_n\} $
be a sequence that converges to  $\max\{\beta^{o}_{\eta}(z), \beta^{o'}_{\eta'}(z')-C\} $.
The third coordinate of  $\varphi_{\reg}$ in \eqref{homeoreg} is clearly continuous, hence $C_n\to C$.
By compactness of $\partial_{\infty} X_i$, 
up to subsequence $\xi_n\to \xi_\infty$ and  $\xi'_n\to \xi'_\infty$.  By injectivity  of $\varphi_{\reg}$,  $\xi_\infty=\eta$ and $\xi'_\infty=\eta'$ and we get continuity.
\end{proof}
\begin{remark}\label{signos} 
Observe that 
a sequence $(x_n,y_n)$
converges to $(\xi, \xi', C)$ iff:
\begin{align*}
x_n &\rightarrow \xi\\
y_n &\rightarrow \xi'\\
d_1(x_n, o) - d_2(y_n, o')&\rightarrow C.
\end{align*}
\end{remark}
Let $\join(\partial_{\infty}X_1, \partial_{\infty} X_2)$ denote the topological join of $\partial_{\infty}X_1$ and $\partial_{\infty} X_2$.
Propositions~\ref{homeo singular} and ~\ref{homeo regular} can be improved as:

\begin{proposition}\label{joinprop} There is a natural homeomorphism
$$\partial_{\infty}^{\max}(X_1 \times X_2)\cong \join(\partial_{\infty}X_1, \partial_{\infty} X_2).$$ 
\end{proposition}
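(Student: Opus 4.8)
The plan is to exhibit an explicit continuous bijection from the join onto $\partial_{\infty}^{\max}(X_1\times X_2)$ and then upgrade it to a homeomorphism by a compactness argument. First I would realize the topological join as
$$\join(\partial_{\infty}X_1,\partial_{\infty}X_2)\cong\bigl(\partial_{\infty}X_1\times\partial_{\infty}X_2\times[-\infty,+\infty]\bigr)/\!\sim,$$
where at the end $+\infty$ the factor $\partial_{\infty}X_2$ is collapsed (so $(\xi,\xi',+\infty)\sim(\xi,\xi'',+\infty)$) and at $-\infty$ the factor $\partial_{\infty}X_1$ is collapsed; this is homeomorphic to the standard join via any homeomorphism $[-\infty,+\infty]\cong[0,1]$. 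Then I define a map $\Phi$ out of this model by sending an interior point $(\xi,\xi',C)$, $C\in\mathbb{R}$, to the regular point $\varphi_{\reg}^{-1}(\xi,\xi',C)=[\max\{\beta^{o}_{\xi}(z),\beta^{o'}_{\xi'}(z')-C\}]$, and sending the two collapsed ends to singular points through $\varphi_{\sing}^{-1}$: the point $\xi$ at the $+\infty$ end goes to $[\beta^{o}_{\xi}]$ and the point $\xi'$ at the $-\infty$ end goes to $[\beta^{o'}_{\xi'}]$. Because the regular and singular parts partition $\partial_{\infty}^{\max}(X_1\times X_2)$ while the interior and the two ends partition the join, Propositions~\ref{homeo singular} and~\ref{homeo regular} immediately give that $\Phi$ is a well-defined bijection; it descends to the quotient precisely because the image of an end does not depend on the collapsed coordinate.

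Since the join of compact spaces is compact and $\partial_{\infty}^{\max}(X_1\times X_2)$ is metrizable, hence Hausdorff, it then suffices to prove that $\Phi$ is continuous: a continuous bijection from a compact space to a Hausdorff space is automatically a homeomorphism. Continuity on the interior is exactly the content of Proposition~\ref{homeo regular}, so all the new work is continuity at the two ends, where regular points must be shown to degenerate to the correct singular points.

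The crux is therefore the following seam estimate. Let $(\xi_n,\xi'_n,C_n)$ be interior points of the join converging to the point $\xi$ at the $+\infty$ end; concretely $\xi_n\to\xi$ in $\partial_{\infty}X_1$ and $C_n\to+\infty$, while $\xi'_n\in\partial_{\infty}X_2$ is unconstrained. Using Remark~\ref{convergence}, I would show that the normalized horofunctions $\max\{\beta^{o}_{\xi_n}(z),\beta^{o'}_{\xi'_n}(z')-C_n\}$ converge to $\beta^{o}_{\xi}(z)$ uniformly on every compact box $K_1\times K_2\subset X_1\times X_2$ (these exhaust the compacta of the product, so this suffices). On $K_2$ each Busemann function $\beta^{o'}_{\xi'_n}$ is $1$-Lipschitz and vanishes at $o'$, hence is bounded in absolute value by $\max_{z'\in K_2}d_2(z',o')$, a bound independent of $n$ and of $\xi'_n$; since $C_n\to+\infty$, this forces $\beta^{o'}_{\xi'_n}(z')-C_n\to-\infty$ uniformly on $K_2$. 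Meanwhile $\beta^{o}_{\xi_n}\to\beta^{o}_{\xi}$ uniformly on $K_1$ by continuity of the (visual $=$ horofunction) boundary parametrization of the single factor $X_1$. Both sides are normalized at $O$ for $n$ large, since there $\max\{0,-C_n\}=0=\beta^{o}_{\xi}(o)$. Thus for $n$ large the maximum equals $\beta^{o}_{\xi_n}(z)$ throughout $K_1\times K_2$ and converges uniformly to $\beta^{o}_{\xi}(z)$, which is exactly convergence to $\Phi(\xi)=[\beta^{o}_{\xi}]$ in the horofunction compactification. The argument at the $-\infty$ end is symmetric, exchanging the two factors.

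The main obstacle is the uniformity hidden in this last step: the collapsed coordinate $\xi'_n$ may vary arbitrarily along the sequence, so one cannot reason pointwise in $\xi'$. What rescues the argument is that Busemann functions in a fixed factor are uniformly $1$-Lipschitz and normalized at the base point, yielding a bound on compacta that is \emph{completely independent} of the boundary point; this uniform bound is what lets the $-C_n$ term drive one coordinate of the maximum to $-\infty$ evenly across the whole box and across all choices of $\xi'_n$. Combined with the continuity of $\xi\mapsto\beta^{o}_{\xi}$ on compacta, this gives continuity of $\Phi$ at the seam, and the compact-to-Hausdorff argument then closes the proof.
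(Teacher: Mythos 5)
Your proposal is correct, and the heart of it --- the seam estimate showing that $1$-Lipschitzness of Busemann functions plus normalization at the base point forces $\max\{\beta^{o}_{\xi_n}(z),\beta^{o'}_{\xi'_n}(z')-C_n\}=\beta^{o}_{\xi_n}(z)$ on a box $B(o,R)\times B(o',R)$ once $C_n\geq 2R$, uniformly in the unconstrained coordinate $\xi'_n$ --- is exactly the computation in the paper's proof. Where you differ is in the logical packaging. The paper proves a two-sided claim: $\xi_n\to\xi$ and $C_n\to+\infty$ holds \emph{if and only if} the horofunctions converge to $\beta^{o}_{\xi}$; the converse direction is handled by a second case ($C_n$ bounded in $[-R,R]$ along a subsequence), where one extracts a sublimit $\max\{\beta^{o}_{\xi}(z),\beta^{o'}_{\xi'}(z')-C\}$ and argues, using that Busemann functions have slope $-1$ along rays to their ideal point, that such a function cannot coincide with a Busemann function in a single factor. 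You avoid that entire second case by orienting the map from the compact join into the Hausdorff (metrizable) boundary and invoking the compact-to-Hausdorff upgrade of a continuous bijection to a homeomorphism, so only the forward direction is needed. This buys a genuinely shorter argument at the cost of making the continuity of the inverse non-explicit; the paper's version, by contrast, records the useful fact that bounded $C_n$ can never produce a singular limit, which is essentially the observation reused later when analyzing which sequences accumulate in the regular versus singular part. One small point to make explicit in your write-up: a sequence in the join converging to a point of the $+\infty$ end may mix interior points with points of that end, so continuity at the seam needs both your estimate (for the interior subsequence) and Proposition~\ref{homeo singular} (for the subsequence within the end); since both subsequential limits agree, this is immediate, but it should be said.
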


\begin{proof}
In view of Propositions~\ref{homeo singular} and \ref{homeo regular} and Remark~\ref{homeo regular with a base point},
we have to prove the following claim: 
for sequences $(\xi_n)$ in $\partial_{\infty} X_1$,  $(\xi_n')$ in $\partial_{\infty} X_2$, and $(C_n)$ in $\mathbb R$
we have $\xi_n\to  \xi$ and $C_n\to +\infty$ as $n\to\infty$ if and only if 
the function $$(z,z')\mapsto \max \{\beta_{\xi_n}^o(z), \beta_{\xi_n'}^{o'}(z')-C_n  \}   $$
converges to $(z,z')\mapsto \beta_{\xi}^o(z)$ uniformly on compact sets. Notice that we do no require convergence on $(\xi'_n)$.
We also need the symmetric claim when $C_n\to -\infty$, and after replacing $\max \{\beta_{\xi_n}^o(z), \beta_{\xi_n'}^{o'}(z')-C_n  \}$
by $\max \{\beta_{\xi_n}^o(z)+C_n, \beta_{\xi_n'}^{o'}(z')  \}$ (a different function in the same equivalence class) but  the proof is symmetric.

To prove the claim, assume first that $C_n\to + \infty$. Take as compact set the ball centered at $(o,o')$: $B(o, R)\times B(o', R)$.
As Busemann functions are 1-Lipschitz
and we chose normalizations so that $\beta_{\xi_n}^o(o)=  \beta_{\xi_n'}^{o'}(o')=0  $, for $C_n\geq  2 R$ we have 
$\max \{\beta_{\xi_n}^o(z), \beta_{\xi_n'}^{o'}(z')-C_n  \}=  \beta_{\xi_n}^o(z)$ for $(z,z')\in B(o, R)\times B(o', R)$.
Here uniform convergence of $\beta_{\xi_n}^o$ on compact subsets follows from the horosphere compactification of $X_1$.
Next  assume  $C_n\in [-R, R]$. Here 
$\max \{\beta_{\xi_n}^o(z), \beta_{\xi_n'}^{o'}(z')-C_n  \}$ has a converging subsequence 
to  $\max \{\beta_{\xi}^o(z), \beta_{\xi'}^{o'}(z')-C  \}$, uniformly on compact subsets.
Using that the Busemann functions have slope -1 in rays pointing to
the ideal point, we see that the limit $\max \{\beta_{\xi}^o(z), \beta_{\xi'}^{o'}(z')-C  \}$ cannot 
be expressed a Busemann function in any of the factors, $\beta_{\xi}^o$ or $\beta_{\xi'}^{o'}$. 
\end{proof}

\section{ Diagonal actions}

Let $\Gamma$ be an infinite quasi-convex group of   isometries of a proper $\mathrm{CAT}(-1)$ space $X$. In this section we consider the diagonal action of $\Gamma$ on $X\times X$:
\begin{align*}
\Gamma\times X\times X &\rightarrow X\times X\\
(\gamma, x, y) &\mapsto (\gamma x, \gamma y).
\end{align*}
The diagonal action extends continuously to the ideal boundary of the $\max$ compactification. The following is straightforward:

\begin{lemma}\label{diagonalonboundary} The diagonal action on the points of $\partial^{\max}_{\infty}(X\times X)$ is given by:
\begin{align*}
\gamma [\beta^o_{\xi}] &= [\beta^o_{\gamma\xi}]\\
\gamma [\max\{\beta^o_{\xi}, \beta_{\xi'}^{o} - C\}] & = [\max\{\beta^o_{\gamma\xi}, \beta_{\gamma\xi'}^{o} - C + \beta^o_{\xi'}(\gamma^{-1}o) - \beta_{\xi}^{o}(\gamma^{-1}o)\}].  
\end{align*}
\end{lemma}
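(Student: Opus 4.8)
The plan is to reduce everything to a single transformation law for Busemann functions under isometries and then track additive constants carefully. First I would recall the general principle governing isometric actions on the horofunction compactification: if $\gamma$ is an isometry of a proper metric space $Y$, then since $d(\gamma x,\cdot)=d(x,\gamma^{-1}\cdot)$, the embedding $\iota$ satisfies $\iota(\gamma x)=[\,d(x,\cdot)\circ\gamma^{-1}\,]$, so the induced action on classes of functions is $\gamma\cdot[f]=[f\circ\gamma^{-1}]$. Here $Y=(X\times X,d_{\max})$ and $\gamma$ acts diagonally, so $\gamma^{-1}(z,z')=(\gamma^{-1}z,\gamma^{-1}z')$. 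Since isometries extend to homeomorphisms of the horofunction boundary, it suffices to evaluate $[f\circ\gamma^{-1}]$ on the explicit representatives of singular and regular points provided by Proposition~\ref{boundary points}.

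The key computational input I would establish is the Busemann transformation law
\begin{equation*}
\beta^o_\xi\circ\gamma^{-1}=\beta^{\gamma o}_{\gamma\xi}
\end{equation*}
for an isometry $\gamma$ of $X$. This follows directly from the definition: writing $\beta^o_\xi(w)=\lim_{t\to\infty}d(c(t),w)-t$ for the ray $c$ with $c(0)=o$ and $c(+\infty)=\xi$, one has $\beta^o_\xi(\gamma^{-1}w)=\lim_{t}d(\gamma c(t),w)-t$, and $\gamma c$ is precisely the ray from $\gamma o$ to $\gamma\xi$. Evaluating this identity at $w=o$ also yields the auxiliary equality $\beta^{\gamma o}_{\gamma\xi}(o)=\beta^o_\xi(\gamma^{-1}o)$, which will supply the additive constants appearing in the statement.

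For the singular case a point is represented by $(z,z')\mapsto\beta^o_\xi(z)$; applying the two displayed facts gives $\gamma\cdot[\beta^o_\xi]=[\beta^{\gamma o}_{\gamma\xi}]$, and since $\beta^{\gamma o}_{\gamma\xi}$ and $\beta^o_{\gamma\xi}$ differ by a constant by Lemma~\ref{lemma:propertiesbusemann}(ii), this equals $[\beta^o_{\gamma\xi}]$. For the regular case the representative is $(z,z')\mapsto\max\{\beta^o_\xi(z),\beta^o_{\xi'}(z')-C\}$, and precomposing with $\gamma^{-1}$ together with the transformation law yields $\max\{\beta^{\gamma o}_{\gamma\xi}(z),\beta^{\gamma o}_{\gamma\xi'}(z')-C\}$. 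I would then rewrite both Busemann functions with base point $o$ using Lemma~\ref{lemma:propertiesbusemann}(ii) and the auxiliary equality, obtaining $\max\{\beta^o_{\gamma\xi}(z)+\beta^o_\xi(\gamma^{-1}o),\,\beta^o_{\gamma\xi'}(z')+\beta^o_{\xi'}(\gamma^{-1}o)-C\}$, and finally factor out the global constant $\beta^o_\xi(\gamma^{-1}o)$ to pass to the class in $C_*(X\times X)$, which produces exactly $[\max\{\beta^o_{\gamma\xi},\,\beta^o_{\gamma\xi'}-C+\beta^o_{\xi'}(\gamma^{-1}o)-\beta^o_\xi(\gamma^{-1}o)\}]$.

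The computation is entirely routine once the transformation law is in place; the only point requiring care — and the closest thing to an obstacle — is the bookkeeping of additive constants in the regular case, where one must correctly identify which base-point change is discarded as a global constant on passing to $C_*(X\times X)$ and which remains as the shift inside the maximum. Keeping track of the sign conventions in the normalization $\beta^o_\eta(o)=0$ is precisely what makes the two surviving constant terms combine into the stated difference $\beta^o_{\xi'}(\gamma^{-1}o)-\beta^o_\xi(\gamma^{-1}o)$.
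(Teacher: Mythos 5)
Your proposal is correct, and the paper itself gives no argument here (it simply declares the lemma straightforward); your computation via $\gamma\cdot[f]=[f\circ\gamma^{-1}]$, the transformation law $\beta^o_\xi\circ\gamma^{-1}=\beta^{\gamma o}_{\gamma\xi}$, and the base-point change $\beta^{\gamma o}_{\gamma\xi}=\beta^o_{\gamma\xi}+\beta^o_\xi(\gamma^{-1}o)$ is exactly the intended routine verification, and the constants come out matching the statement.
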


\begin{remark} Under the identification in Remark \ref{homeo regular with a base point} the diagonal action maps a singular point $\xi$ to $\gamma\xi$,
and a regular point $(\xi,\xi', C)$, to $(\gamma\xi, \gamma\xi', C + \beta^o_{\xi}(\gamma^{-1}o) - \beta_{\xi'}^{o}(\gamma^{-1}o))$.
\end{remark}

 In this section we prove that there is an open subset 
$\Omega_{\Gamma}^{\max}\subset\partial_{\infty}^{\max}(X\times X)$ where the 
diagonal action of $\Gamma$ is properly discontinuous and cocompact. In
Subsection~\ref{projection} 
we prove that  the nearest point projection of $X\times X$ to
the diagonal $\Delta\subset X\times X$ extends continuously to
$\overline{X\times X}^{\max}$.
 In 
Subsection \ref{idealdomain} we use this projection to show that there 
exist such set $\Omega_{\Gamma}^{\max}\subset\partial_{\infty}^{\max}(X\times 
X)$. Moreover, we see that the action on the whole $X\times X \cup 
\Omega_{\Gamma}^{\max}$ is properly discontinuous and cocompact and that 
$\Omega_{\Gamma}^{\max}$ is the largest open set of the boundary that satisfies 
these conditions.  

\subsection{Extending the projection to the diagonal}
\label{projection}

The nearest point projection from $ X\times X$ to the diagonal for the $\max$ distance is given by the midpoint:
\begin{equation}
\label{eqn:projX}
\begin{array}{rcl}
\pi \colon X\times X &\rightarrow& \Delta\\
(x, y) &\mapsto& (m, m), 
\end{array}
\end{equation}
where $\Delta$ is the diagonal in $X\times X$:
$$ \Delta = \{(x, x) \;|\; x\in X\},$$
and $m$ is the midpoint of the geodesic segment joining $x$ and $y$.
By construction, $\pi$ is continuous and equivariant.

In this section we extend it continuously to a map 
$$
\tilde{\pi}\colon  \overline{X\times X}^{\max}  \rightarrow \overline{\Delta}^{\max}=\Delta\cup \Delta_\infty,
$$
where $ \overline{\Delta}^{\max}$ is the closure of $\Delta$ in $ \overline{X\times X}^{\max} $, and
 $$\Delta_{\infty} = \{(\xi, \xi)\; |\; \xi \in \partial_{\infty}X \},$$ 
 denotes the diagonal in $\partial_{\infty}X \times \partial_{\infty}X$. 
 For this purpose, we consider the decomposition
 $$
 \partial_{\infty}^{\max}(X\times X)= \partial_{\infty}^{\max}(X\times X)_{\sing} 
 \sqcup \varphi_{\reg}^{-1}(\Delta_{\infty}\times\mathbb{R}) \sqcup \Omega^{\max}
 $$ 
where
$$
\Omega^{\max} = \varphi_{\reg}^{-1}((\partial_{\infty}X\times\partial_{\infty}X\setminus\Delta_{\infty})\times\mathbb{R})\subset 
\partial_{\infty}^{\max}(X\times X)_{\reg}.
,$$
and and $\varphi_{\reg}$ is the homeomorphism in Proposition \ref{homeo regular}.
In  \cite{Tere} the projection is extended continuously to a map
$$
\Omega^{\max}\to \Delta
$$
Following \cite{Tere}, the extension uses  that $ \Omega^{\max} $ is naturally homeomorphic to the set $G$ of parameterized geodesics in $X$ (with the topology of uniform convergence on compact sets) through the map:
\begin{align*}
\varphi\colon  G &\longrightarrow \Omega^{\max}\\
 g &\mapsto \lim_{n\rightarrow \infty} (g(n), g(-n))
\end{align*}
Via this identification,  by \cite{Tere} the projection extends continuously to 
\begin{equation}
 \label{eqn:projG}
\begin{array}{rcl}
 G & \to & \Delta \\
 g & \mapsto & (g(0),g(0))
\end{array} 
\end{equation}
Thus it remains to extend it to $\varphi_{\reg}(\Delta_\infty\times\mathbb{R})$ and to $\partial_{\infty}^{\max}( X\times X)_{\sing}$.

\begin{definition} The extended projection 
$$
\tilde{\pi}\colon  \overline{X\times X}^{\max}  \rightarrow \overline{\Delta}^{\max}\cong\overline{X}
$$
is defined by \eqref{eqn:projX} and \eqref{eqn:projG}  on $X\times X\cup \Omega^{\max}$. 

On  $\varphi_{\reg}(\Delta_\infty\times\mathbb{R})$ it is the projection to
$\Delta_\infty$, 
and on  $\partial_{\infty}^{\max}( X\times X)_{\sing}\cong \partial_{\infty}X \sqcup \partial_{\infty}X$ it is the identification 
$ \partial_{\infty}X\cong \Delta_\infty$.
%
\end{definition}

\begin{remark}\label{identificationgeodesics} 
We have an equivariant homeomorphism $\varphi'=\varphi_{\reg}\circ \varphi$, given by:
\begin{align*}
\varphi'\colon  G &\rightarrow ((\partial_{\infty}X\times \partial_{\infty}X)\setminus{\Delta_{\infty}})\times\mathbb{R}\\
g &\mapsto (g(+\infty), g(-\infty), C_g),
\end{align*}
where:
$$ C_g = \lim_{n\rightarrow \infty} d(g(n), o) - d(g(-n), o) = \beta^{o}_{g(-\infty)}(g(0)) - \beta^{o}_{g(+\infty)}(g(0)).$$
Therefore, a geodesic $g$ corresponds to a point 
$$(g(+\infty), g(-\infty), C_g)\in ((\partial_{\infty}X\times \partial_{\infty}X)\setminus{\Delta_{\infty}})\times\mathbb{R} $$ 
which in its turn corresponds to the regular point: 
$$[\max\{\beta^o_{g(+\infty)}, \beta^o_{g(-\infty)} - C_g\}] \in \Omega^{\max}.$$
\end{remark}
 
%

For the continuity of
 $\tilde{\pi}$, we  use of  that the Gromov product extends continuously to the boundary of a proper $\mathrm{CAT}(-1)$ space 
 \cite[Proposition~3.4.2]{Buyalo-Schroeder}. The Gromov product of two points $x,y\in X$ with respect to a base point $o\in X$ is defined as:
$$(x|y)_o = \frac{1}{2}\left[ d(x,o) + d(y,o) - d(x,y) \right]$$
For $\xi, \xi'$ two points in the visual boundary of a proper $\mathrm{CAT}(-1)$ the Gromov product is defined as:
$$\lim_{i,j}(x_i|y_j)_o= (\xi|\xi')_o,$$
for any sequences $x_i\rightarrow \xi$, $y_j\rightarrow \xi'$. 

For $g$ a geodesic in $X$, the Gromov product of the ideal points $g(+\infty)$ and $g(-\infty)$ with respect to a base point $o$, can be written in terms of Busemann functions as:
$$(g(+\infty)|g(-\infty))_o= \frac{1}{2}\left[\beta^ {g(0)}_{g(+\infty)} (o) + \beta^{g(0)}_{g(-\infty)}(o)\right].$$
The Gromov product for ideal points satisfies:
 $$(\xi|\xi')_o= +\infty \text{ if and only if } \xi=\xi',$$
see \cite{Buyalo-Schroeder}. Similarly two sequences $x_i$, $y_j$ have the same limit iff:
$$(x_i|y_j)_o= +\infty.$$

\begin{theorem} The map $\tilde{\pi}\colon  \overline{X\times X}^{\max} \rightarrow \overline{X}$ is continuous and equivariant.
\end{theorem}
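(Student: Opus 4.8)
The plan is to prove equivariance and continuity separately, and to reduce continuity to the two strata on which $\tilde\pi$ is not yet known to be continuous, namely $\varphi_{\reg}(\Delta_\infty\times\mathbb R)$ and $\partial_{\infty}^{\max}(X\times X)_{\sing}$. Equivariance is immediate: the midpoint projection $\pi$ of \eqref{eqn:projX} is equivariant because isometries preserve geodesics and their midpoints; on $\Omega^{\max}$ the identification $\varphi'$ of Remark~\ref{identificationgeodesics} is equivariant and carries $g\mapsto(g(0),g(0))$ to the diagonal, so this piece is equivariant too; and on the remaining strata $\tilde\pi$ is defined through the natural identifications $\partial_{\infty}X\cong\Delta_\infty$ and the projection $\Delta_\infty\times\mathbb R\to\Delta_\infty$, both of which commute with the boundary action described in Lemma~\ref{diagonalonboundary}. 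Hence it remains to establish continuity.

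Since $\overline{X\times X}^{\max}$ is metrizable, I would check continuity on sequences. By \cite{Tere} the map is already continuous on $X\times X\cup\Omega^{\max}$, so I only need continuity at a point $p$ in $\varphi_{\reg}(\Delta_\infty\times\mathbb R)$ or in $\partial_{\infty}^{\max}(X\times X)_{\sing}$, against an arbitrary sequence converging to $p$ (coming from the interior, from $\Omega^{\max}$, or from the same stratum). The common mechanism is to locate the midpoint $m_n=\pi(x_n,y_n)$ relative to the base point $o$ by means of the Gromov product, whose continuous extension to $\partial_{\infty}X$ from \cite[Proposition~3.4.2]{Buyalo-Schroeder} detects the diagonal through $(\eta|\eta')_o=+\infty$ iff $\eta=\eta'$.

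Consider first a regular diagonal point $p=\varphi_{\reg}^{-1}(\xi,\xi,C)$, so $\tilde\pi(p)=\xi\in\Delta_\infty$. For an interior sequence $(x_n,y_n)\to p$, Remark~\ref{signos} gives $x_n\to\xi$, $y_n\to\xi$ and $d(x_n,o)-d(y_n,o)\to C$, so $(x_n|y_n)_o\to(\xi|\xi)_o=+\infty$. The $\mathrm{CAT}(-1)$ (coarsely tree-like) comparison then shows that the foot $z_n$ of $o$ on $[x_n,y_n]$ satisfies $d(o,z_n)=(x_n|y_n)_o+O(\delta)\to\infty$, hence $z_n\to\xi$, while the signed distance from $z_n$ to $m_n$ along $[x_n,y_n]$ equals $\tfrac12(d(y_n,o)-d(x_n,o))+O(\delta)\to-\tfrac{C}{2}$; thus $d(m_n,z_n)$ stays bounded and $m_n\to\xi$. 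For a sequence arising from $\Omega^{\max}$, i.e.\ geodesics $g_k$ with $(g_k(+\infty),g_k(-\infty),C_{g_k})\to(\xi,\xi,C)$, the same estimate applied to the ideal endpoints, via the Busemann formula for $(g_k(+\infty)|g_k(-\infty))_o$, gives $d(o,g_k)\to\infty$, and the bounded $C_{g_k}$ controls the offset of $g_k(0)$ from the foot of $o$ on $g_k$, whence $\tilde\pi(g_k)=(g_k(0),g_k(0))\to(\xi,\xi)$.

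Finally, at a singular point, say $p=\xi\in\partial_\infty X$ in the first summand with $\tilde\pi(p)=\xi$, an interior sequence $(x_n,y_n)\to p$ satisfies $x_n\to\xi$ and $d(x_n,o)-d(y_n,o)\to+\infty$. The triangle inequality gives $d(o,m_n)\ge\tfrac12\left(d(x_n,o)-d(y_n,o)\right)\to\infty$, and the coarse tree computation places $m_n$ within $O(\delta)$ of $[o,x_n]$ with $(m_n|x_n)_o\to\infty$; as $x_n\to\xi$ this forces $m_n\to\xi$. Sequences approaching $p$ from $\Omega^{\max}$ are treated identically, now with $C_{g_k}\to+\infty$ and $g_k(+\infty)\to\xi$ pushing $g_k(0)$ toward $\xi$. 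The step I expect to be the main obstacle is exactly this uniform control: keeping the Gromov-product and fellow-traveling estimates valid as the transverse distance $d(o,[x_n,y_n])$ and the longitudinal parameter $C$ degenerate simultaneously, so that the bounded-distance conclusion $m_n\to\xi$ survives the passage to the boundary stratum. The continuity of the Gromov product up to $\partial_\infty X$ is precisely the tool that legitimizes these limits.
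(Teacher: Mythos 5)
Your proposal is correct and follows essentially the same route as the paper: the same reduction to the diagonal--regular and singular strata, the same case split between interior sequences and sequences of regular ideal points (identified with geodesics via Remark~\ref{identificationgeodesics}), and the same key tool, namely the continuous extension of the Gromov product to the boundary with $(\xi|\xi')_o=+\infty$ iff $\xi=\xi'$. The only difference is that where you use coarse thin-triangle estimates with $O(\delta)$ errors (and flag their uniformity under degeneration as the main obstacle), the paper uses exact Gromov-product identities such as $2(x_n|m_n)_o=(x_n|y_n)_o+d(m_n,o)+\tfrac12\left[d(x_n,o)-d(y_n,o)\right]$ and $(g_n(0)|\xi_n)_o+(g_n(0)|\xi_n')_o=d(g_n(0),o)+(\xi_n|\xi_n')_o$, which hold exactly and make the uniform-control issue you worry about disappear.
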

\begin{proof}
The equivariance  follows from naturality. 
For the continuity, 
we have also seen in \cite{Tere} that $\tilde{\pi}$ restricted to $X\times X \cup \Omega^{\max}$ is continuous, 
but it remains to be proved that it is continuous in the rest of the points. There are mainly two cases to check. First, we have to see that the image of a sequence of points $(x_n, y_n)$ in $X\times X$ that converges to an ideal point, either in the singular part or in the diagonal of the regular part of the boundary, converges to the image of this ideal point. This is case (I) in the proof. Second, we have to check that the image of a sequence of ideal points that converges to an ideal point either in the diagonal of the regular part or in the singular part of the boundary, converges to the image of the ideal point. This is case (II) in the proof. Along all the proof, $m_n$ will denote the midpoint of the segment joining $x_n$ and $y_n$.

{\bf Case (I)}.
Consider a sequence $(x_n, y_n)$ in $X\times X$ converging to an ideal point. We distinguish two subcases: the limit of the sequence is a singular point (a) or the limit is a point in the diagonal of the regular part (b).

 \emph{Subcase (a)}. Suppose, up to permuting factors, that the sequence converges to a singular point in the boundary of the first factor: $(x_n, y_n) \rightarrow [\beta_{\xi}^o]$. Therefore,  $x_n\rightarrow \xi$ and $d(x_n,o)-d(y_n,o)\rightarrow +\infty$. By the triangle inequality: $d(m_n, o) \geq d(m_n, y_n) - d(y_n, o)$, and using the definition of the Gromov product, we have:
\begin{eqnarray*}
(x_n|m_n)_o  \geq  \frac{1}{2}\left[d(x_n,o) -d(y_n,o)\right].
\end{eqnarray*}
Henceforth $(x_n|m_n)_o\rightarrow +\infty$, and by the properties of the Gromov product, $x_n$ and $m_n$ have the same limit. Therefore, $\tilde{\pi}((x_n, y_n)) = m_n \rightarrow \tilde{\pi}([\beta_{\xi}^o])=\xi$.

 \emph{Subcase (b)}. Now suppose that the sequence converges to a diagonal point in the regular part of the boundary: $(x_n, y_n) \rightarrow [\max\{\beta_{\xi}^o,\beta_{\xi}^o+ C\}]$. In this case $x_n\rightarrow \xi$, $y_n\rightarrow \xi$ and $d(x_n, o) -d(y_n,o) \rightarrow -C$. 

Using the definition of the Gromov product again and reorganizing terms, we have: 
\begin{equation}
\label{eq1}
2(x_n|m_n)_o= (x_n|y_n)_o + d(m_n, o) + \frac{1}{2}\left[d(x_n,o) - d(y_n,o)\right].
\end{equation}
\noindent Observe that $d(m_n, o)\geq 0$, $\frac{1}{2}\left[d(x_n,o) - d(y_n,o)\right]$ is 
uniformly bounded and $(x_n|y_n)_o\rightarrow\infty$, since both $x_n$ and $y_n$ converge to the same point. Then, from equation \eqref{eq1}, we deduce that $(x_n|m_n)_o \rightarrow +\infty$, which implies that $m_n\rightarrow \xi$, and $\tilde{\pi}((x_n, y_n)) = m_n \rightarrow \tilde{\pi}([\max\{\beta_{\xi}^o,\beta_{\xi}^o+ C\}])=\xi$.    

\smallskip
{\bf Case (II).}
Next we deal with a sequence of regular ideal points of the form $[\max\{\beta^{o}_{\xi_n}, \beta^{o}_{\xi_n'}+C_n\}]$ in $\partial_{\infty}^{\max}(X\times X)$ with limit a regular point in the diagonal, subcase (a), or a singular point, subcase (b). From now on, $g_n$ will denote the geodesic corresponding to a point 
$[\max\{\beta^{o}_{\xi_n}, \beta^{o}_{\xi_n'}+C_n\}]$ under the identification $\Omega^{\max} \cong G$.

 \emph{Subcase (a)}. Suppose that the sequence converges to a regular diagonal point: $[\max\{\beta^{o}_{\xi_n}, \beta^{o}_{\xi_n'}+C_n\}] \rightarrow [\max\{\beta^{o}_{\xi}, \beta^{o}_{\xi}+C\}].$ In this case $\xi_n\rightarrow \xi$, $\xi'_n\rightarrow \xi$ and $C_n\rightarrow C$.
Now, for each $n$, we consider a sequence of points $x_k$ in $X$ such that $x_k\rightarrow \xi_n$. Using the fact that the Gromov product extends continuously to the boundary of a $\mathrm{CAT}(-1)$ space, and the definition of Busemann function, we can write:
\begin{multline*}
 (g_n(0)|\xi_n)_o = \lim_{k\rightarrow \infty} (g_n(0)|x_k)_{o}= \lim_{k\rightarrow \infty} \frac{1}{2}\left[d(g_n(0), o) + d(x_k, o)\right. \\\left.- d(g_n(0), x_k)\right]= \frac{1}{2}\left[d(g_n(0), o) + \beta^{g_n(0)}_{\xi_n}(o)\right].
 \end{multline*}
Similarly, taking a sequence $y_k$ in $X$ for each $n$, with $y_k\rightarrow \xi'_n$:
$$(g_n(0)|\xi_n')_o = \frac{1}{2}\left[d(g_n(0), o) + \beta^{g_n(0)}_{\xi_n'}(o)\right].$$
Adding the two equalities above we obtain:
\begin{multline}
\label{eq2}
(g_n(0)|\xi_n)_o + (g_n(0)|\xi_n')_o = d(g_n(0), o)+\frac{1}{2}\beta^{g_n(0)}_{\xi_n}(o)+\frac{1}{2}\beta^{g_n(0)}_{\xi_n'}(o)\\= d(g_n(0), o) + (\xi_n|\xi'_n)_o. 
\end{multline}
By compactness of $\overline{X}$, after passing to a subsequence we may assume that 
$g_n(0) \rightarrow \eta\in\overline{X}$. Then, since $d(g_n(0), o)\geq 0$ and $(\xi_n|\xi'_n)_o\rightarrow (\xi|\xi)_o= +\infty$, by equality \eqref{eq2}
we have that $(\eta|\xi)_o=+\infty$. So $\xi=\eta$ and $g_n(0)\rightarrow\xi$. Therefore, every convergent subsequent of $g_n(0)$ converges to $\xi$ and then $g_n(0)$ converges to $\xi$. So $\tilde{\pi}([\max\{\beta^{o}_{\xi_n}, \beta^{o}_{\xi_n'}+C_n\}] ) = g_n(0) \rightarrow \tilde{\pi}([\max\{\beta_{\xi}^o,\beta_{\xi}^o+ C\}])=\xi$.

\emph{Subcase (b)}. Next we suppose that the sequence converges to a singular point, a Busemann function in the second factor (up to permutation of factors):
 $[\max\{\beta^{o}_{\xi_n}, \beta^{o}_{\xi_n'}+C_n\}] \rightarrow [\beta^{o}_{\xi'}]$ with $\xi_n\rightarrow \xi$, $\xi_n'\rightarrow \xi'$ and since $\beta^{o}_{\xi'}$ is a Busemann function in the second factor,  $C_n\rightarrow +\infty$. Observe that  $\xi$ and $\xi'$ might be equal. 

Similarly to the preceding case, for each $n$:
$$(g_n(0)|\xi'_n)_o =\frac{1}{2}\left[ d(g_n(0), o) - \beta^o_{\xi'_n}(g_n(0))\right], $$
$$(g_n(0)|\xi_n)_o = \frac{1}{2}\left[d(g_n(0), o) - \beta^o_{\xi_n}(g_n(0))\right], $$
and we can combine the two equalities to get:
\begin{eqnarray*}
(g_n(0)|\xi'_n)_o&=& \frac{1}{2}d(g_n(0), o) - \frac{1}{2}\beta^o_{\xi'_n}(g_n(0))\\ &=&(g_n(0)|\xi_n)_o + \frac{1}{2}\beta^o_{\xi_n}(g_n(0))- \frac{1}{2}\beta^o_{\xi'_n}(g_n(0))\\ &=& (g_n(0)|\xi_n)_o + \frac{1}{2}C_n.
\end{eqnarray*}
Here we have used that $C_n =\beta^o_{\xi_n}(g_n(0))- \beta^o_{\xi'_n}(g_n(0))$ 
by Remark \ref{identificationgeodesics}. Now, since $(g_n(0)|\xi_n)_o\geq 0$ and 
$C_n\rightarrow +\infty$ we have that $(g_n(0)|\xi'_n)_o\rightarrow +\infty$ and 
$g_n(0)\rightarrow \xi'$. So $\tilde{\pi}([\max\{\beta^{o}_{\xi_n}, 
\beta^{o}_{\xi_n'}+C_n\}] ) = g_n(0) \rightarrow 
\tilde{\pi}([\beta^{o}_{\xi'}])=\xi'$.
\end{proof}
\subsection{The ideal domain $\Omega^{\max}_{\Gamma}$}
\label{idealdomain}

Let $\Gamma$ be an infinite quasi-convex group of   isometries of $X$. We 
denote by $\Lambda_{\Gamma}$ its limit set, which  is the set of accumulation 
points of an orbit in $\partial_{\infty}X$, and by  $\Omega_{\Gamma}$,  its 
domain of discontinuity, which is the complement of the limit set in 
$\partial_{\infty} X$. The action of $\Gamma$ on $X\cup \Omega_{\Gamma}$ is 
properly discontinuous and cocompact \cite{Coornaert2,Swenson}. 
Next we show that the diagonal action of $\Gamma$ on the inverse image under the 
projection $\tilde{\pi}$ of $X\cup \Omega_{\Gamma}$ is also properly 
discontinuous and cocompact:

\begin{theorem}
\label{quasiconvexdomain}
Let $X$ be a proper  $\mathrm{CAT}(-1)$ space and let $\Gamma \subset 
\operatorname{Isom}(X)$ be an infinite quasi-convex group. The diagonal action 
of $\Gamma$ on $\tilde{\pi}^{-1}(X\cup \Omega_\Gamma)$ is properly discontinuous 
and cocompact.
\end{theorem}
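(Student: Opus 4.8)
The plan is to obtain both properties essentially for free by transporting them through the continuous equivariant map $\tilde{\pi}\colon\overline{X\times X}^{\max}\to\overline{X}$ of the previous theorem, using that $\Gamma$ already acts properly discontinuously and cocompactly on the base $X\cup\Omega_\Gamma$ by \cite{Coornaert2,Swenson}. First I would record that $X\cup\Omega_\Gamma=\overline{X}\setminus\Lambda_\Gamma$ is open and $\Gamma$-invariant in $\overline{X}$, since $\Lambda_\Gamma$ is closed; hence $\mathcal{O}:=\tilde{\pi}^{-1}(X\cup\Omega_\Gamma)$ is an open $\Gamma$-invariant subset of the compact space $\overline{X\times X}^{\max}$, and $\tilde{\pi}$ restricts to a continuous equivariant map $\mathcal{O}\to X\cup\Omega_\Gamma$. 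Everything then reduces to two elementary transport arguments.

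For proper discontinuity I would argue directly from the definition. Let $K\subset\mathcal{O}$ be compact; then $\tilde{\pi}(K)$ is compact in $X\cup\Omega_\Gamma$, so by proper discontinuity of the base action the set $F=\{\gamma\in\Gamma:\gamma\tilde{\pi}(K)\cap\tilde{\pi}(K)\neq\emptyset\}$ is finite. If $\gamma K\cap K\neq\emptyset$, choosing a point $w$ in the intersection and using equivariance gives $\tilde{\pi}(w)\in\gamma\tilde{\pi}(K)\cap\tilde{\pi}(K)$, so $\gamma\in F$; thus $\{\gamma:\gamma K\cap K\neq\emptyset\}\subseteq F$ is finite. (Equivalently, pulling back a neighborhood $V$ of $\tilde{\pi}(p)$ that meets only finitely many of its translates shows that every $p\in\mathcal{O}$ has the neighborhood $\tilde{\pi}^{-1}(V)$ with the same property.)

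For cocompactness the key observation I would make is that $\tilde{\pi}$ is a \emph{proper} map: it is continuous from the compact space $\overline{X\times X}^{\max}$ to the Hausdorff space $\overline{X}$, so the preimage of any compact set is closed in a compact space and therefore compact. Concretely the fiber over $x\in X$ is the set of pairs with midpoint $x$ together with the geodesics through $x$, which is compact by an Arzel\`a--Ascoli argument, since such geodesics are $1$-Lipschitz and pinned at $x$. Then I would pick, by cocompactness of the base, a compact $D\subset X\cup\Omega_\Gamma$ with $\Gamma D=X\cup\Omega_\Gamma$ and set $\tilde{D}=\tilde{\pi}^{-1}(D)$; since $D\subset X\cup\Omega_\Gamma$ we have $\tilde{D}\subset\mathcal{O}$, and $\tilde{D}$ is compact by properness. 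Equivariance then yields
$$\Gamma\tilde{D}=\tilde{\pi}^{-1}(\Gamma D)=\tilde{\pi}^{-1}(X\cup\Omega_\Gamma)=\mathcal{O},$$
so $\tilde{D}$ is a compact set whose translates cover $\mathcal{O}$, which is exactly cocompactness.

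The main obstacle to watch is precisely the one the compactification is designed to overcome: the interior fibers of the projection (pairs with a prescribed midpoint) are unbounded, so a priori one might fear that cocompactness fails. This worry evaporates once one notes that the fibers close up to compact sets inside $\overline{X\times X}^{\max}$, equivalently that $\tilde{\pi}$ is proper; all the genuinely analytic work, namely the continuity of $\tilde{\pi}$ across the singular and diagonal strata, has already been carried out in the preceding theorem, so no new estimates are required here.
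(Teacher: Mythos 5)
Your proposal is correct and follows essentially the same route as the paper: both arguments transport proper discontinuity and cocompactness through the continuous, equivariant, proper map $\tilde{\pi}$, relying on the fact (from \cite{Coornaert2,Swenson}) that $\Gamma$ acts properly discontinuously and cocompactly on $X\cup\Omega_\Gamma$. The only cosmetic difference is that the paper pulls back the closure of a Dirichlet domain satisfying the three relevant properties at once, whereas you pull back an arbitrary compact fundamental set and handle the two properties separately.
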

\begin{proof}
Besides being  continuous and equivariant, $\tilde{\pi}\colon  \overline{X\times X}^{\max} \rightarrow \overline{X}$
 is proper, since it is a continuous map from a compact to a Hausdorff space. 
 
In \cite{Swenson} it is shown that for a Dirichlet domain $D\subset X$ its closure in $\overline{D}\subset\overline X$
is a compact set that satisfies:
\begin{enumerate}[label=(\roman*)]  
\item $\overline{D}\subset X\cup\Omega_\Gamma$, 
\item $\bigcup_{\gamma\in\Gamma} \gamma \overline{D}=X\cup\Omega_\Gamma$,
and 
\item for every compact $K\subset X\cup\Omega_\Gamma$, $|\{\gamma\in\Gamma\mid \gamma K\cap K\neq\emptyset\}|<\infty$. 
\end{enumerate}
Therefore,  $\tilde{\pi}^{-1}( \overline{D} )$ satisfies the corresponding properties for the action on $\tilde{\pi}^{-1}(X\cup \Omega_\Gamma)$,
that imply the theorem.
\end{proof}

Now, let $\Omega^{\max}_{\Gamma}$ be the intersection of $\tilde{\pi}^{-1}(X\cup \Omega_{\Gamma})$ with the ideal boundary of $\overline{X\times X}^{\max}$:
$$\Omega^{\max}_{\Gamma}= \tilde{\pi}^{-1}(X\cup \Omega_{\Gamma})\cap \partial_{\infty}^{\max}(X\times X),$$
and let $\Delta_{\Gamma}$ be the subset of the diagonal in $\partial_{\infty} X \times \partial_{\infty} X$ that corresponds to the limit points of the action of $\Gamma$ on $X$:
  $$\Delta_{\Gamma}= \{(\xi, \xi) \in \partial_{\infty} X \times  \partial_{\infty} X\text{ with }\xi\in \Lambda_{\Gamma}\}.$$

\begin{remark} Via the homeomorphism in Proposition \ref{joinprop} that identifies $\partial_{\infty}^{\max}(X\times X)$ with $\join(\partial_{\infty} X, \partial_{\infty} X)$:
\label{homeoomega}
$$\Omega^{\max}_{\Gamma}\cong((\partial_{\infty} X \times  \partial_{\infty} X)\setminus \Delta_{\Gamma})\times \mathbb{R} \cup \Omega^1_{\Gamma} \cup \Omega^2_{\Gamma}, $$
where $\Omega^i_{\Gamma}$ is $\Omega_{\Gamma}$ seen in the factor $\partial_{\infty}X_i$, for each $i=1,2$. Observe that when $\Gamma$ is a cocompact group $\Omega_{\Gamma}^{\max}$ is just the set $\Omega^{\max}$ of the previous subsection.
\end{remark}

In Proposition \ref{uniquedomain} we will show that $\Omega^{\max}_{\Gamma}$ is 
the largest open set of the boundary where the diagonal action is properly 
discontinuous. But first, let us study the limit set of this action on 
$\partial_{\infty}^{\max}(X \times X)$. Since $(X\times X, d_{\max})$ is no 
longer $\mathrm{CAT}(0)$, the set of accumulation points of each orbit depends 
on the orbit. We define the \emph{large limit set} of the diagonal action as the union 
of all the accumulation sets of each orbit on 
$\partial_{\infty}^{\max}(X\times X)$:
$$\Lambda = \displaystyle\bigcup_{(x, y) \in X\times X} \overline{\Gamma(x,y)} 
\cap \partial_{\infty}^{\max}(X \times X) $$ 
\begin{lemma}
\label{limit set} The large limit set $\Lambda$ of the diagonal action of $\Gamma$ on $\overline{X\times X}^{\max}$ seen under the homeomorphism $\varphi_{\reg}$ is
$$\Delta_{\Gamma}\times \mathbb{R}.$$
\end{lemma}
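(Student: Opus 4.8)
The plan is to establish the identity $\Lambda = \Delta_\Gamma\times\mathbb{R}$ (under $\varphi_{\reg}$) by proving the two inclusions separately, the crucial structural input being that the diagonal action is by isometries, so $d(\gamma x,\gamma y)=d(x,y)$ is independent of $\gamma$. Throughout I would fix the basepoint $O=(o,o)$ and use the criterion of Remark~\ref{signos}: a divergent sequence $(\gamma_n x,\gamma_n y)$ accumulates at the regular point $(\xi,\xi',C)$ exactly when $\gamma_n x\to\xi$, $\gamma_n y\to\xi'$ in $\overline X$ and $d(\gamma_n x,o)-d(\gamma_n y,o)\to C$. The recurring computational device is to rewrite orbit distances through the inverse, $d(\gamma_n x,o)=d(x,\gamma_n^{-1}o)$, so that limits of normalized distance functions become Busemann function values.

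First I would prove $\Lambda\subseteq\Delta_\Gamma\times\mathbb{R}$. Fix $(x,y)\in X\times X$ and a sequence $\gamma_n$ with $(\gamma_n x,\gamma_n y)$ diverging and converging to an ideal point $\zeta$. Since $|d(\gamma_n x,o)-d(\gamma_n y,o)|\le d(\gamma_n x,\gamma_n y)=d(x,y)$ is bounded, we are always in case (II), so $\zeta$ is regular, say $\zeta=(\xi,\xi',C)$. Because $d(\gamma_n x,\gamma_n y)$ is bounded while $d(\gamma_n x,o)\to\infty$, the Gromov product $(\gamma_n x\,|\,\gamma_n y)_o\to+\infty$, forcing $\xi=\xi'$ by the characterization recalled in Subsection~\ref{projection}. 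Finally $d(\gamma_n x,\gamma_n o)=d(x,o)$ is bounded, so $\gamma_n o\to\xi$ as well, whence $\xi\in\Lambda_\Gamma$; thus $\zeta\in\Delta_\Gamma\times\mathbb{R}$.

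For the reverse inclusion I would start from an arbitrary $(\xi,\xi,C)\in\Delta_\Gamma\times\mathbb{R}$ and construct an orbit accumulating at it. Pick $\gamma_n$ with $\gamma_n o\to\xi$ (possible as $\xi\in\Lambda_\Gamma$) and, after passing to a subsequence by compactness of $\overline X$, assume $\gamma_n^{-1}o\to\eta\in\partial_\infty X$. Then for any $x,y$, writing $z_n=\gamma_n^{-1}o\to\eta$ gives $d(\gamma_n x,o)-d(\gamma_n y,o)=d(x,z_n)-d(y,z_n)\to\beta^o_\eta(x)-\beta^o_\eta(y)$, where convergence of the normalized distance functions to the Busemann function is the equivalence of the visual and horofunction compactifications. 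It then remains to choose $(x,y)$ realizing $\beta^o_\eta(x)-\beta^o_\eta(y)=C$: taking a ray $c$ from $o$ to $\eta$ (so $\beta^o_\eta(c(t))=-t$) and setting $(x,y)=(o,c(C))$ if $C\ge 0$ and $(x,y)=(c(-C),o)$ if $C\le 0$ works. Since $\gamma_n x$ and $\gamma_n y$ stay within bounded distance of $\gamma_n o\to\xi$, both converge to $\xi$, and by Remark~\ref{signos} the orbit converges to $(\xi,\xi,C)$, as required.

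The routine parts are the two boundedness estimates and the Gromov-product argument, which only repeat the techniques of the previous subsection. The step I expect to carry the real content is the reverse inclusion, specifically the identification of $\lim_n\bigl(d(x,\gamma_n^{-1}o)-d(y,\gamma_n^{-1}o)\bigr)$ with $\beta^o_\eta(x)-\beta^o_\eta(y)$ and the free realization of the parameter $C$; this is where one genuinely uses that the $\mathbb{R}$-coordinate of a regular point records a difference of Busemann values and that passing to $\gamma_n^{-1}$ converts the orbit's spread into an honest boundary limit $\eta$.
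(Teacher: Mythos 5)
Your proof is correct and follows essentially the same route as the paper: the forward inclusion via $|d(\gamma_n x,o)-d(\gamma_n y,o)|\le d(x,y)$ and the bounded-distance (or Gromov product) argument forcing a diagonal regular limit, and the reverse inclusion by sending $\gamma_n o\to\xi$, extracting an ideal accumulation point $\eta$ of $\gamma_n^{-1}o$, and choosing $x,y$ with the prescribed Busemann difference. You in fact supply more detail than the paper does (the explicit computation $d(\gamma_n x,o)-d(\gamma_n y,o)\to\beta^o_\eta(x)-\beta^o_\eta(y)$ and the concrete choice of $x,y$ on a ray), and your sign convention $\beta^o_\eta(x)-\beta^o_\eta(y)=C$ is the one consistent with Remark~\ref{signos}.
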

\begin{proof}
First, observe that the limit of any sequence $(\gamma_n x, \gamma_n y)$ that 
converges to the ideal boundary is contained in $\Delta_{\Gamma}\times 
\mathbb{R}$. Indeed, by the triangle inequality, $|d(\gamma_nx, o)- d(\gamma_n 
y,o)|\leq d(x,y)$, so the limit will be a regular point. Moreover if $\gamma_n 
x\rightarrow \xi$ then $\gamma_n y\rightarrow \xi$ since $d(\gamma_nx, \gamma_n 
y)= d(x,y)$, i.e.~the sequences $\gamma_n x$ and $\gamma_n y$ stay within a 
bounded distance. Therefore the limit point lies in $\Delta_{\Gamma}\times 
\mathbb{R}$. 

Moreover, any point $(\xi, \xi, C)$ for $\xi\in\Lambda_{\Gamma}$ belongs to the limit set. Take any sequence $\gamma_n$ such that $\gamma_no\rightarrow \xi$. Let $\xi'$ be an accumulation point for $\gamma^{-1}_n o$ and $x$,$y$ two points satisfying:
$$\beta^o_{\xi'}(y)-\beta^o_{\xi'}(x)=C.$$ 
Then $(\xi, \xi, C)$ is the limit of the sequence $(\gamma_nx, \gamma_ny)$. 
\end{proof}

\begin{remark} 
\label{remark:largelimitset}
The large limit set $\Lambda$ is not closed but observe that the complement of $\Omega^{\max}_{\Gamma}$ is $\overline{\Lambda}$, which is indeed closed.
\end{remark}
\begin{proposition}
\label{uniquedomain} $\Omega^{\max}_{\Gamma}$ is the largest open subset of $\partial_{\infty}^{\max}(X\times X)$  such that the action of $\Gamma$ on $X\times X\cup \Omega^{\max}_{\Gamma}$ is properly discontinuous.
\end{proposition}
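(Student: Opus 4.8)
The plan is to split the statement into two halves. First I would observe that $X\times X\cup\Omega^{\max}_{\Gamma}=\tilde\pi^{-1}(X\cup\Omega_{\Gamma})$. Indeed, $\tilde\pi$ maps $X\times X$ onto $\Delta\subset X$, so $X\times X\subset\tilde\pi^{-1}(X\cup\Omega_{\Gamma})$, while by definition $\Omega^{\max}_{\Gamma}=\tilde\pi^{-1}(X\cup\Omega_{\Gamma})\cap\partial^{\max}_{\infty}(X\times X)$; since every point of $\tilde\pi^{-1}(X\cup\Omega_{\Gamma})$ is either interior or ideal, the two sets coincide. Consequently Theorem~\ref{quasiconvexdomain} already shows that the diagonal action on $X\times X\cup\Omega^{\max}_{\Gamma}$ is properly discontinuous (indeed cocompact), so it only remains to prove maximality: every open $W\subset\partial^{\max}_{\infty}(X\times X)$ for which the action on $X\times X\cup W$ is properly discontinuous must satisfy $W\subset\Omega^{\max}_{\Gamma}$.

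By Remark~\ref{remark:largelimitset} the complement of $\Omega^{\max}_{\Gamma}$ in the ideal boundary is the closure $\overline{\Lambda}$ of the large limit set, so it is enough to show $W\cap\overline{\Lambda}=\emptyset$. Since $W$ is open, if it met $\overline{\Lambda}$ it would already meet $\Lambda$ itself, so I argue by contradiction assuming $W$ contains a point $p\in\Lambda$. By the definition of the large limit set (cf.\ Lemma~\ref{limit set}) there is a base point $(x,y)\in X\times X$ and a sequence $(\gamma_n)$ in $\Gamma$ with $\gamma_n(x,y)\to p$ in $\overline{X\times X}^{\max}$. Because $p$ lies on the ideal boundary while $(x,y)$ is interior and the action on $X\times X$ is properly discontinuous, the orbit $\Gamma(x,y)$ is discrete in $X\times X$, so the $\gamma_n$ may be taken pairwise distinct.

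The key step is then to exhibit a single compact set that meets infinitely many of its translates inside $X\times X\cup W$. I would take
$$
K=\{(x,y)\}\cup\{\gamma_n(x,y):n\in\mathbb{N}\}\cup\{p\}.
$$
As a convergent sequence together with its limit $p\in W$ and the one extra interior point $(x,y)$, the set $K$ is compact and contained in $X\times X\cup\{p\}\subset X\times X\cup W$. For every $n$ one has $\gamma_n^{-1}\bigl(\gamma_n(x,y)\bigr)=(x,y)\in K$, hence $\gamma_n^{-1}K\cap K\neq\emptyset$; as the elements $\gamma_n^{-1}$ are pairwise distinct, infinitely many translates of $K$ meet $K$, contradicting proper discontinuity. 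Therefore $W$ contains no point of $\Lambda$, hence none of $\overline{\Lambda}$, and $W\subset\partial^{\max}_{\infty}(X\times X)\setminus\overline{\Lambda}=\Omega^{\max}_{\Gamma}$.

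The only delicate points I foresee are bookkeeping ones: justifying that the $\gamma_n$ can be chosen pairwise distinct (using that the orbit accumulates only on the ideal boundary, the interior action being properly discontinuous) and checking that $K$ is genuinely compact in the subspace topology of $X\times X\cup W$. Both are routine once the convergence $\gamma_n(x,y)\to p$ is in hand, and no geometric input beyond Lemma~\ref{limit set} and Remark~\ref{remark:largelimitset} is required: the argument is the standard mechanism by which an accumulation point of an orbit obstructs proper discontinuity.
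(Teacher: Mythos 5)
Your proposal is correct and follows essentially the same route as the paper: identify the complement of $\Omega^{\max}_{\Gamma}$ in the ideal boundary with the closure of the large limit set (Remark~\ref{remark:largelimitset}, Lemma~\ref{limit set}), so that any strictly larger open set contains a point of $\Lambda$, which obstructs proper discontinuity. The paper leaves the final obstruction implicit, whereas you spell out the compact-set argument (and phrase maximality as ``every properly discontinuous open set is contained in $\Omega^{\max}_{\Gamma}$''), but this is elaboration rather than a different proof.
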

\begin{proof} By Remark \ref{homeoomega}, any open set $A$ containing $\Omega^{\max}_{\Gamma}$, and strictly larger than $\Omega^{\max}_{\Gamma}$, contains points of $\Delta_{\Gamma}\times \mathbb{R}$. Then, by Lemma \ref{limit set}, $A$ contains points of the limit set so the action on $A$ cannot be properly discontinuous.
\end{proof}

\section{Examples}
In this section we consider some examples for the $\max$ compactification of 
diagonal actions. The first one is the diagonal action of a cocompact group of 
isometries of a riemannian manifold. The second one is the action of
convex cocompact Kleinian groups on 
$\mathbb{H}^n\times\mathbb{H}^n$. Finally we describe an example of the 
max compactification of a diagonal action on the product of two trees. This is 
also an example where the nearest point projection to the diagonal is not a 
fibration.
\subsection{Compact riemannian manifolds}
\label{maxcompactificationriemannian}
Recall that the fibre, restricted to $X\times X$, of $\tilde{\pi}$ over a point $(a,a)$ in the diagonal is the set of points $(x,y)$ such that $a$ is the midpoint of the segment joining $x$ and $y$. If $X=\mathbb{H}^2$, then the fibre over $(a,a)\in\Delta\subset \mathbb{H}^2\times\mathbb{H}^2$ is the set:
$$F_a = \{(x, s_ax)\; | \; x\in\mathbb{H}^2\} $$
where $s_ax$ is the symmetric point of $x$ with respect to $a$. Then $F_a\cong \mathbb{H}^2$ for any $a$, through the map $(x, s_ax)\mapsto x$, and all the fibres over the diagonal are homeomorphic to disks.
The boundary at infinity of the fibre over $(a,a)$ is the set of parameterized geodesics with $g(0)=a$, so:
$$\partial_{\infty}^{\max} F_a = \{ g\;|\; g(0)= a \}\cong (T_a\mathbb{H}^2)^1$$
For $S=\mathbb{H}^2/\Gamma$ a compact hyperbolic surface, the $\max$ compactification of $(\mathbb{H}^2\times\mathbb{H}^2)/\Gamma$ is the fibration by closed disks of $S=\mathbb{H}^2/\Gamma$, so:
$$\overline{(\mathbb{H}^2\times\mathbb{H}^2)/\Gamma}^{\max} \cong US,$$
where $US =\{ (x, v)\in TS \; | \; |v|\leq 1\}$.

The same is true for a Cartan-Hadamard manifold $X$ of dimension $n$ and 
sectional curvature 
$\leq -1$: $\partial_{\infty}F_x$ is identified with the unit tangent sphere at $x$, 
$(T_{x}X)^1 \cong S^{n-1}$, and the fibre over each point of the diagonal is a 
closed disk. If $M=X/\Gamma$ is a compact 
manifold, then the compactification of $(X\times X)/\Gamma$ with respect to the 
$\max$ metric is homeomorphic to the fibration by closed disks of $M=X/\Gamma$:
$$\overline{(X\times X)/\Gamma}^{\max} \cong UM,$$
where $UM =\{ (x, v)\in TM \; | \; |v|\leq 1\}$.

\subsection{Convex cocompact Kleinian groups}

Let $\Gamma <\operatorname{Isom}^+(\mathbb{H}^n)  $ be a discrete torsion free subgroup, that is convex cocompact.
Assume that $M=\mathbb{H}^n/\Gamma$ is not compact, then it has finitely many ends and its
compactification consists in adding a compact conformal $(n-1)$ manifold $N_i^{n-1}=\Omega_i/\Gamma_i$,
where $\Omega_i$ is a connected component of the discontinuity domain $\Omega=\partial_\infty \mathbb{H}^3
\setminus \Lambda_\Gamma$.

The compactification $\overline{(\mathbb{H}^n\times\mathbb{H}^n)/\Gamma}^{\max}$ is the union of 
the fibration by
compact balls on $M$ and a finite collection of products of the conformal ideal manifolds with intervals,  
$N_i^{n-1}\times \overline{\mathbb{R}}$, where $\overline{\mathbb{R}}=\mathbb{R}\cup\{-\infty,+\infty\}\cong [0,1]$.

To understand how these products $N_i^{n-1}\times [0,1]$ are attached, we consider a diverging geodesic 
ray in $r\colon [0,+\infty)\to M$, corresponding or a point in  $\partial_\infty 
M$.
For each $t\in [0,+\infty)$, the fibre $\pi^{-1} (r(t))$ is a compactified  hyperbolic space $\overline{\mathbb{H}^n}$,
we aim to understand how they fit with a compactified $\mathbb{R}$ when $t\to\infty$. Assume that 
$r$ is a ray in $\mathbb{H}^n$.  Points in  $\pi^{-1} (r(t))$ are of the form 
$(\exp_{r(t)}(  v), \exp_{r(t)}(-  v))$, for some  
$v\in T_{r(t)}\mathbb H^n$. To compare different fibers,  let $ V $ be a parallel vector field along $r$
that is unitary. Let $\theta\in [0,2\pi)$ be the angle between $r'(t)$ and $V(t)$, which is constant by parallelism. Then every point of 
$\pi^{-1} (r(t))$ is written as
$$
(\exp_{r(t)}( s\, V (t)), \exp_{r(t)}(-  s\, V (t)))
$$
for some $V $ as above and $s\in\mathbb{R}_{\geq 0}$.

\begin{proposition}\label{colapseSchottky}
Given  $V$ as above and $s\in\mathbb{R}_{\geq 0}$, 
$$
\lim_{t\to+\infty} (\exp_{r(t)}( s\, V (t)), \exp_{r(t)}(-  s\, V 
(t)))= \max\{ \beta_{r(+\infty)}^{r(0)}, \beta_{r(+\infty)}^{r(0)}- 2 d\}
$$
where $d\in\mathbb{R}$ is defined by the relation 
\begin{equation}
\label{eqn:projection}
\tanh d= \cos\theta \tanh s
 \end{equation}
and $\theta$ is the (constant) angle between $r'$ and $V$. 
\end{proposition}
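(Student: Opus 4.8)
The plan is to compute the limiting horofunction directly from the definition of convergence in the max compactification (Remark~\ref{signos}), by tracking the three data attached to the point $(\exp_{r(t)}(sV(t)),\exp_{r(t)}(-sV(t)))$ as $t\to+\infty$: the limits of each coordinate in $\overline{X}$, and the difference of distances to the base point. Since both coordinates are obtained from $r(t)$ by exponentiating $\pm sV(t)$ in directions making a fixed angle $\theta$ with $r'$, I expect both coordinates to converge to the \emph{same} ideal point $r(+\infty)\in\partial_\infty\mathbb{H}^n$, so that the limit is a regular point $(\xi,\xi,C)$ lying in the diagonal of the regular part, of the form $\max\{\beta_{r(+\infty)}^{r(0)},\beta_{r(+\infty)}^{r(0)}-C\}$. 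The substance of the proposition is therefore the identification $C=2d$ with $d$ given by \eqref{eqn:projection}.

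\emph{First step: both coordinates converge to $r(+\infty)$.} I would fix the base point at $r(0)$ and work in the upper half-space or Poincaré ball model of $\mathbb{H}^n$. For each fixed $s$, the point $\exp_{r(t)}(sV(t))$ is at bounded distance $s$ from $r(t)$, and $r(t)\to r(+\infty)$; since in a $\mathrm{CAT}(-1)$ space a sequence staying within bounded distance of a geodesic ray converges to the same ideal endpoint, both $\exp_{r(t)}(\pm sV(t))\to r(+\infty)$. This places the limit in the diagonal regular part and reduces everything to computing the third coordinate
$$
C=\lim_{t\to+\infty}\big(d(\exp_{r(t)}(sV(t)),r(0))-d(\exp_{r(t)}(-sV(t)),r(0))\big).
$$

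\emph{Second step: the hyperbolic trigonometry.} This is where the relation $\tanh d=\cos\theta\tanh s$ enters and is the main computation. I would set up the two hyperbolic right-angled (or general) triangles with one vertex at $r(0)$, one at $r(t)$, and the third at $\exp_{r(t)}(\pm sV(t))$; in each the side from $r(t)$ has length $s$, the side from $r(0)$ to $r(t)$ has length $t$ (asymptotically), and the angle at $r(t)$ between $r'(t)$ and $\pm V(t)$ is $\theta$, respectively $\pi-\theta$. Applying the hyperbolic law of cosines $\cosh c=\cosh t\cosh s-\sinh t\sinh s\cos(\text{angle})$ to each triangle, subtracting, and letting $t\to+\infty$ (so $\cosh t,\sinh t\to\infty$ at the same exponential rate), the leading terms combine; a short asymptotic expansion shows the difference of the two hypotenuses tends to a finite limit which I expect to equal $2d$ with $d$ satisfying \eqref{eqn:projection}. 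The cleanest route is probably to recognize $d$ as the signed distance from $r(t)$ to the foot of the perpendicular from $\exp_{r(t)}(sV(t))$ onto $r$: the right triangle with hypotenuse $s$, angle $\theta$ at $r(t)$ and legs along and perpendicular to $r$ gives exactly $\tanh d=\cos\theta\tanh s$, and by symmetry the two feet are at signed distances $+d$ and $-d$, so the Busemann-function difference is $2d$.

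\emph{Main obstacle.} The genuine work is the asymptotic cancellation in the second step: one must verify that the two hypotenuse lengths each diverge like $t$ but that their difference converges, and that this finite difference is \emph{exactly} $2d$ and not merely asymptotic to it. I would control this either by the law-of-cosines expansion above, or more robustly by computing the Busemann function $\beta_{r(+\infty)}^{r(0)}$ directly at $\exp_{r(0)}(\pm s V(0))$-type points via the explicit formula for Busemann functions in $\mathbb{H}^n$ and using that $d(\cdot,r(0))-t$ converges to the Busemann value; the projection interpretation $\tanh d=\cos\theta\tanh s$ makes the perpendicular-foot computation elementary and avoids delicate error-term bookkeeping. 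Once $C=2d$ is established, the form of the limit follows from Remark~\ref{signos} and Proposition~\ref{homeo regular}.
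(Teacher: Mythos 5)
Your proposal is correct and follows essentially the same route as the paper: the bounded distance $s$ from $\exp_{r(t)}(\pm s\,V(t))$ to $r(t)$ forces both coordinates to converge to $r(+\infty)$, so the limit is a maximum of Busemann functions centered there, and the remaining constant is computed from the hyperbolic law of cosines applied to the triangles with vertices $r(0)$, $r(t)$, $\exp_{r(t)}(\pm s\,V(t))$. The asymptotic cancellation you flag as the main obstacle is handled in the paper by the clean identity $\lim_{t\to+\infty}e^{d_+(t)-d_-(t)}=\lim_{t\to+\infty}\cosh d_+(t)/\cosh d_-(t)=(\cosh s-\cos\theta\sinh s)/(\cosh s+\cos\theta\sinh s)$, whose logarithm equals $-2d$ exactly by the relation \eqref{eqn:projection}.
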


The relation \eqref{eqn:projection}  means that $d$  is the signed distance from 
$r(t)$ to the orthogonal projection of $\exp_{r(t)}( s\, V (t))$ to 
the ray $r$, see Figure~\ref{fig:triang}.
This proposition explains how  the  $\overline{\mathbb{H}^n}$   in the fibre are attached 
to a segment in the limit: the whole $\mathbb{H}^n$ is projected orthogonally to the geodesic containing $r$,
and, by continuous extension,  $\overline{\mathbb{H}^n}$ is projected to $\overline{\mathbb{R}}=\mathbb{R}\cup\{-\infty,+\infty\}$.

\begin{figure}
\begin{center}
\begin{tikzpicture}[scale=.7]
\begin{scope}[rotate=30]
 \draw [thick] (-2,0) -- ( 5,0);
 \draw [thick] (0,0) -- ( 3,2);
 \draw [thin]  (3,0) --  (3,2) ;
\draw [thin] (1,0) arc [radius=1, start angle=0, end angle= 35];
 \end{scope}
 \node at (.5,-.5) {$r(t)$};
 \node at (1.5,.5) {$d$};
 \node at (4.5,2.2) {$r$};
 \node at (1,1) {$\theta$};
 \node at (.5,2) {$s$};
 \node at (2.5,3.6) {${\exp_{r(t)}( s\, V (t))}$};  
 \end{tikzpicture}
\end{center}
   \caption{
   The triangle for Equality~\eqref{eqn:projection}\label{fig:triang} in Proposition~\ref{colapseSchottky}}
\end{figure}
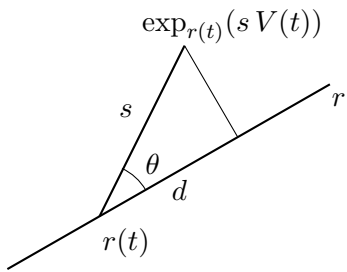

\begin{proof}
As the distance from $\exp_{r(t)}( \pm s\, V (t))$ to $r(t)$ is  $s$, we 
know that the limit is a maximum of Busemann functions centered at 
$r(+\infty)$. 
Therefore we only have to compute the limit 
$$
\lim_{t\to+\infty} d(\exp_{r(t)}( s\, V (t)), r(0)) - d( \exp_{r(t)}(-  
s\, V (t)), r(0) ).
$$
Set $d_{\pm}(t)=d(\exp_{r(t)}( \pm s\, V (t)), r(0))$. By the hyperbolic 
cosine formula:
\begin{align*}
 \cosh d_+(t) = \cosh s\cosh t - \cos\theta \sinh s\sinh t ,\\
 \cosh d_-(t) = \cosh s\cosh t + \cos\theta \sinh s\sinh t .
\end{align*}
Hence
$$
\lim_{t\to+\infty} e^{d_+(t)-d_-(t)}=\lim_{t\to+\infty}\frac{\cosh d_+(t) 
}{\cosh d_-(t)  }
=\frac{\cosh s-\cos \theta\sinh s}{\cosh s+\cos \theta\sinh s} 
$$
By taking logarithms:
$$
\lim_{t\to+\infty} {d_+(t)-d_-(t)}=\log \frac{1-\cos\theta\tanh 
s}{1+\cos\theta\tanh s}
=\log \frac{1-\tanh d}{1+\tanh d}
=-2d . \qedhere
$$
\end{proof}

\subsection{Constant valence trees}
\label{exampletrees}
Let $T$ be the tree of valence $4$ and edges of unit length, it is the Cayley graph of $\mathbb F_2$,
the free group on two generators.
Let  $\Delta\subset T\times T$ denote the diagonal. If we consider the nearest point projection 
to the diagonal, then the fibres over different points of $\Delta$ might be 
different. In fact, given $(a,a)\in \Delta$ there are three possible fibres, 
which are topologically not equivalent, depending on whether $a\in T$ is a 
vertex, a midpoint of an edge or a generic point in an edge.
\begin{theorem}
\label{thm:tree}
For $T$ the tree of valence four and edges of unit length, the 
fibres over $\Delta$ are of one of the following mutually exclusive types:
\begin{enumerate}
\item\label{generic}Generic fibres. For $a\in T$ a generic point in an edge, the 
fibre over $(a,a)$ is a tree of valence $4$. The metric in the fibre depends on 
the distance from $a$ to its 
nearest vertex in $T$, $L$ with $0<L<1/2$.
Along any path through $(a,a)$, the length of consecutive edges alternate 
between $2L$ and $L'= 1-2L$. The point $(a,a)$ is the midpoint of an edge of 
length $2L$.
\begin{figure}[h!]
\centering
\includegraphics[scale=0.2, angle= 90]{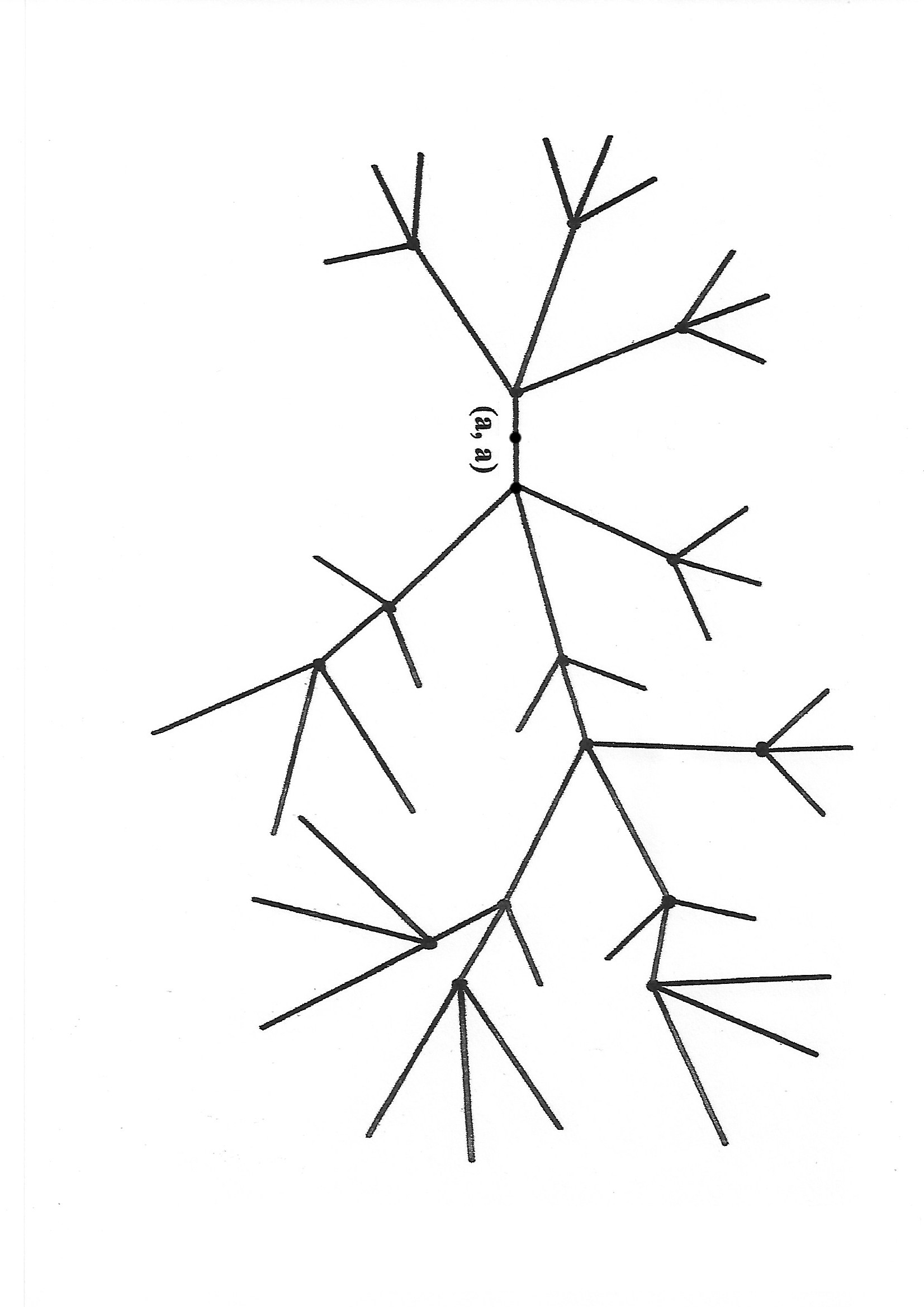}
\caption{Case 1}\label{arbol1}
\end{figure}

\item\label{midpoint}Midpoint fibres. For $a\in T$ a midpoint of an edge, the 
fibre over $(a,a)$ is a tree of constant valence $10$. All edges have length $1$ 
and the point $(a,a)$ is the midpoint of an edge. This is the limit of the 
previous case when $L\rightarrow 1/2$.
\begin{figure}[h!]
\centering
\includegraphics[scale=0.25, angle =90]{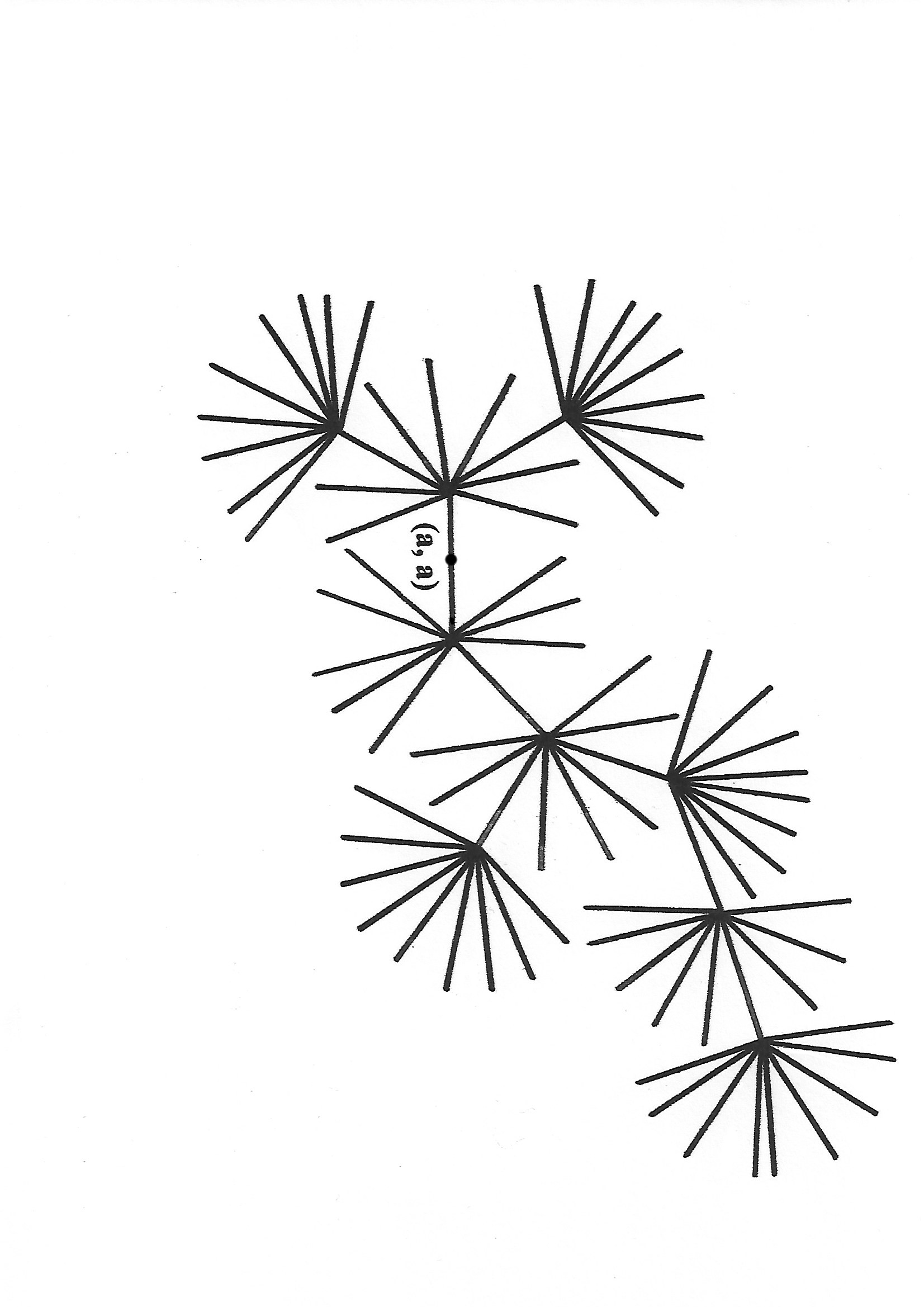}
\caption{Case~2}\label{arbol2}
\end{figure}

\item\label{vertex}Vertex fibres. For $a \in T$ a vertex, the fibre over $(a,a)$ 
is a tree of constant valence $10$, except in the point $(a,a)$, which is a 
valence $12$ vertex. All edges have length $1$.
This is the limit of the generic case when $L\rightarrow 0$, taking into account 
that there are four fibres approaching to the base point, one for each edge in 
$T$ incident to $a$.
\begin{figure}[h!]
\centering
\includegraphics[scale=0.2, angle= 90]{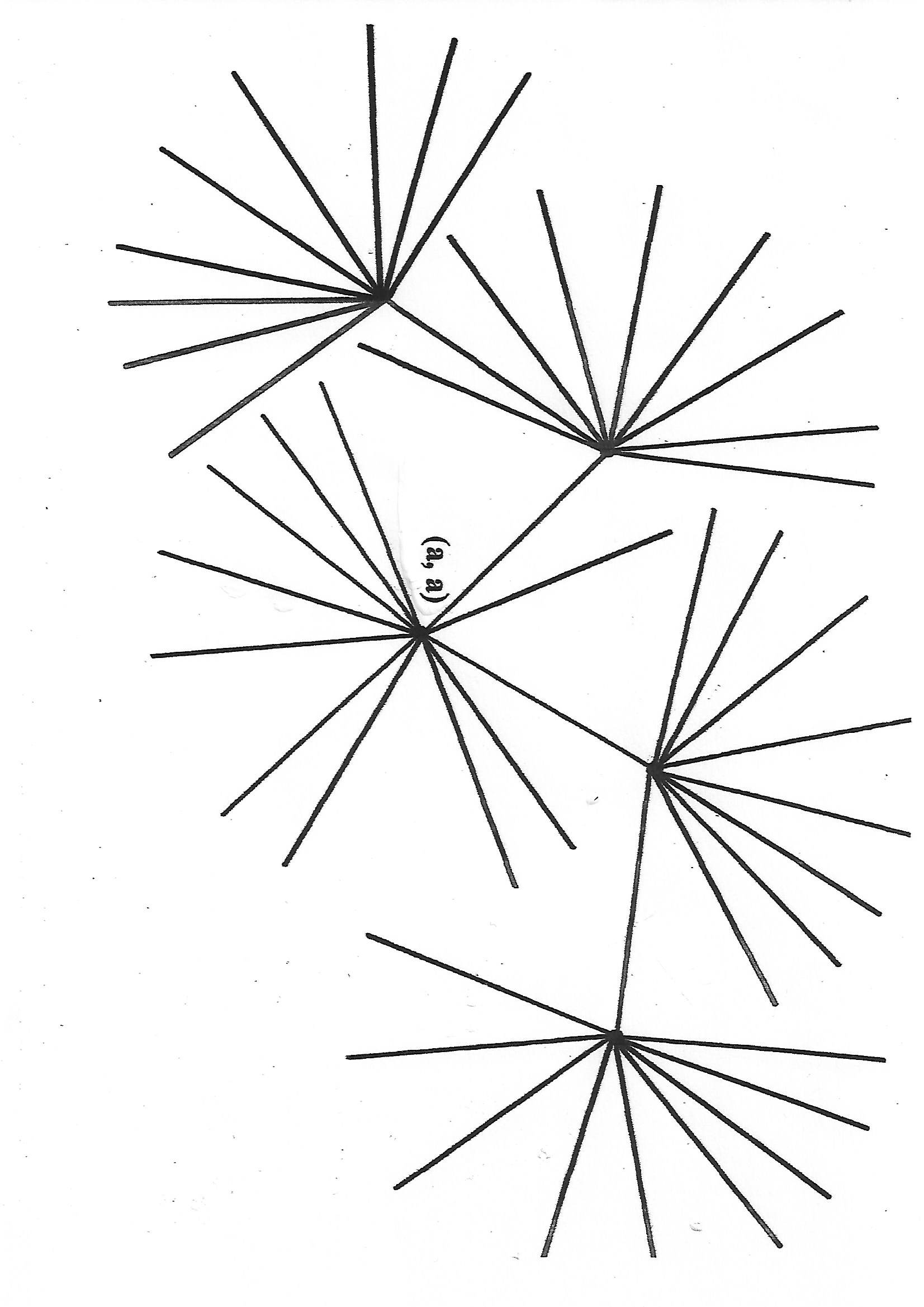}
\caption{Case 3}\label{arbol3}
\end{figure}
\end{enumerate}
\end{theorem}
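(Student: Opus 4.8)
The plan is to describe the fibre $F_a=\tilde\pi^{-1}((a,a))\cap(T\times T)$ explicitly, recognise it as an $\mathbb R$-tree, and then read off its simplicial data. First I would characterise the fibre. Since $T$ is a tree, any two points are joined by a unique segment with a unique midpoint, so by the definition of $\pi$ in \eqref{eqn:projX} a pair $(x,y)$ lies in $F_a$ exactly when $d(x,a)=d(y,a)$ and $a\in[x,y]$; in terms of the Gromov product this is the pair of conditions $d(x,a)=d(y,a)=:r$ and $(x|y)_a=0$. Thus $F_a$ is the set of \emph{ordered geodesic segments of $T$ centred at $a$}, with $r$ the common distance from the two endpoints to $a$ (the ``radius''), and the degenerate radius-$0$ segment is the basepoint $(a,a)$.

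Next I would compute the induced max metric. For $(x,y),(x',y')\in F_a$ of radii $r,r'$, let $s_x$ (resp.\ $s_y$) be the distance from $a$ to the point where $[a,x]$ and $[a,x']$ (resp.\ $[a,y]$ and $[a,y']$) branch in $T$, so that $d(x,x')=r+r'-2s_x$ and $d(y,y')=r+r'-2s_y$. Setting $s_0=\min\{s_x,s_y\}$ gives $d_{\max}((x,y),(x',y'))=r+r'-2s_0$, where $s_0$ is the radius at which the two centred segments first diverge. Since $s_x,s_y$ are confluence functions of $T$ rooted at $a$, a short computation shows that their minimum again satisfies the ultrametric inequality $s_0(p_1,p_3)\ge\min\{s_0(p_1,p_2),s_0(p_2,p_3)\}$ (both sides reduce to the minimum of the four coordinatewise confluences). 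Hence $s_0$ is a Gromov product based at $(a,a)$, the formula $d_{\max}=r+r'-2s_0$ is a tree metric, and $F_a$ is an $\mathbb R$-tree rooted at $(a,a)$ with $r$ as height; in particular it is connected, since every segment shrinks to $(a,a)$.

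Then I would extract the simplicial structure. A point of $F_a$ is a branch point precisely when the centred segment admits several continuations to larger radius, and a continuation branches exactly when one of the endpoints sits at a vertex of $T$: an endpoint at a valence-$4$ vertex has $3$ outgoing continuations and an interior endpoint has $1$, so the number of forward continuations is the product of these and the valence is that product plus $1$ for the inward edge. The branch radii are thus the values of $r$ at which $x$ or $y$ meets a vertex; writing $L$ for the distance from $a$ to its nearest vertex, along any path the $x$-endpoint meets vertices at $r=L,L+1,\dots$ and the $y$-endpoint at $r=1-L,2-L,\dots$. I would then specialise: (i) generic $0<L<1/2$: the two families of branch radii interlace, so at each branch exactly one endpoint is at a vertex, giving valence $3\cdot1+1=4$, the gaps alternate between $1-2L=L'$ and $2L$, and $(a,a)$ is the interior midpoint of the central length-$2L$ edge; (ii) midpoint $L=1/2$: the two families coincide ($L'=0$), both endpoints branch simultaneously, giving valence $3\cdot3+1=10$ and all edges of length $1$; (iii) vertex $L=0$: at $(a,a)$ the two endpoints both start at the valence-$4$ vertex $a$, giving $4\cdot3=12$ ordered pairs of distinct directions and hence valence $12$, while at every other branch both endpoints are at vertices (valence $10$), all edges having length $1$.

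The \emph{main obstacle}, and really the only substantial point, is the middle step: verifying that the max metric on $F_a$ is exactly $r+r'-2\min\{s_x,s_y\}$ and that the minimum of the two coordinatewise confluence functions is again a confluence function, so that $F_a$ is an honest $\mathbb R$-tree. The rest is bookkeeping of branch radii, where the only care needed is in the two degenerate configurations: the midpoint case should be treated as the genuine $L=1/2$ fibre (with the $L'$-edges collapsed) rather than a mere limit, and the vertex case requires observing that the central $2L$-edge collapses as $L\to0$ while the four edges of $T$ incident to $a$ each contribute continuations, which is exactly what raises the central valence from $10$ to $12$.
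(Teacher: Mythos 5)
Your proposal is correct and follows essentially the same route as the paper: both identify the fibre with the set of ordered segments of $T$ centred at $a$ (equivalently, pairs of unit-speed paths leaving $a$ in distinct directions), locate the branch points at the radii where an endpoint meets a vertex of $T$, and obtain the valences and edge lengths by counting continuations ($3\cdot 1+1=4$, $3\cdot 3+1=10$, $4\cdot 3=12$) and by computing $d_{\max}$ between consecutive branch points. The only real divergence is how the tree structure is justified --- the paper orients the edges away from $(a,a)$ and notes each vertex has in-degree one, whereas you compute the induced metric $d_{\max}=r+r'-2\min\{s_x,s_y\}$ and check the ultrametric inequality to exhibit $F_a$ as an $\mathbb{R}$-tree; your version is slightly more work but also delivers the edge lengths for free, and you treat all three cases explicitly where the paper details only the vertex case.
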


\begin{proof}


The fibre of $(a,a)$ is the set of pairs 
$(x,y)\in T\times T$ so that the midpoint of the segment $\overline{xy}$ is $a$. To reach all such a pairs, we start from $a$ and 
consider pairs of paths obtained by moving at speed one along $T$ and pointing away from $a$.
 The first requirement is that
we start in different directions, ie for small times the points $(x, y)$ of the pair are already different. When one of the points reach a vertex, we consider all
possible continuations along different edges.

This construction provides  a graph structure to the fibre: when $x$ and $y$ move along an edge this yields an edge of the fibre, when at least one of them reaches a
vertex of $T$, this yields a vertex of the fibre. This fibre is in fact a tree, because we can orient each edge of the initial tree $T$
so that it points in the direction opposite $a$ (when $a$ lies in the interior of an edge, we split this edge along $a$). 
This yields an orientation of the edges of the fibre, so that each edge points away from $(a,a)$. In addition,
at every vertex only edge points to this vertex, the other edges point away,  hence it is a tree. 

We describe the tree for the fibre of a  vertex in $T$, case 3, the other two cases are a  
follow from similar arguments. Let $a$ be a vertex and 
denote by $(x,y)$ a point in the fibre over $(a,a)$. For each $x$ in a edge 
incident to $a$, there are three possibilities for $y$, such that $a$ is the 
midpoint of $x$ and $y$, one from each of the three remaining edges. In total 
there are $4$ edges incident to $a$, so there are $4\cdot 3 =12$ edges incident to 
$(a,a)$. Next we follow $x$ and $y$ along two edges, always satisfying 
$d(x,a)= d(y,a)$, until both $x$ and $y$ are two vertices $v$ and $v'$. Then, 
for $x$ there are three new possibilities, one for each new edge incident to 
$v$, and if we follow the path along one of the edges there are three 
possibilities for $y$, such that $d(x,a)= d(y,a)$, one for each new edge incident to 
the vertex $v'$. So there are $3\cdot 3 + 1= 10$ edges incident to $(v, 
v')$. This pattern repeats, given rise to a tree with valence $10$ in all 
vertices except the base point. The distance between two consecutive vertices, 
for instance $(a,a)$ and $(v,v')$, is $d_{\max}((a,a), (v, v')) = \max\{d(a,v), 
d(a,v')\} = d(a,v) =1$, so the edges have length $1$.  
\end{proof}
\begin{remark}
Let $T$ be again the tree of valence $4$, also the Cayley graph of $\mathbb{F}_2$.
The projection  $T\times T\to \Delta\cong T$ induces a map from $(T\times T)/\mathbb{F}_2 $ to 
$T/\mathbb{F}_2$, which is a wedge of two circles. The fibres are trees, as in 
Theorem~\ref{thm:tree}, and the tree depends on the point on the wedge $T/\mathbb{F}_2$: Case~3
for the vertex of the wedge, Case 2 for the midpoints of the edges, and Case~1
for the remaining (generic) pints.
The ideal boundary of each fibre in the max compactification is its 
boundary at infinity as a tree, which is a Cantor set in any case.
\end{remark}

The previous considerations of course apply to free groups of rank $n$ and their Cayley graphs.


\section{Product actions}
Let $(X_1, d_1)$ and $(X_2, d_2)$ be two proper $\mathrm{CAT}(-1)$ spaces and 
let $\Gamma$ be an infinite hyperbolic group with $\rho_1\colon \Gamma 
\rightarrow \operatorname{Isom}(X_1)$, $\rho_2\colon \Gamma \rightarrow \operatorname{Isom}(X_2)$ two \emph{quasi-convex 
representations} (by $\rho_i$ \emph{quasi-convex} we mean that it has finite kernel and that $\rho_i(\Gamma)$ is a discrete quasiconvex group). 
In this section we will study under what conditions the large 
limit set $\Lambda_{\rho_1\times\rho_2} $ of the product action
\begin{align*}
\Gamma\times X_1\times X_2 &\rightarrow X_1\times X_2\\
(\gamma, x, y) &\mapsto (\rho_1(\gamma) x, \rho_2(\gamma) y)
\end{align*}
lies inside the regular part of the boundary. We will see that asking the large limit set to lie 
in the regular part of the boundary is in fact a very restrictive condition, which is related to the marked length spectrum conjecture. 
Indeed, in Section \ref{regularlimitsets} we will prove the following proposition:

\begin{proposition}\label{equivalences} Let $X_1$, $X_2$ be proper $\mathrm{CAT}(-1)$ spaces and $\overline{X_1\times X_2}^{\max}$ the horofunction compactification with respect to $d_{\max}$. Let $\Gamma$ be an infinite hyperbolic group and $\rho_1\colon  \Gamma \rightarrow \operatorname{Isom}(X_1)$, $\rho_2\colon  \Gamma \rightarrow \operatorname{Isom}(X_2)$ two quasi-convex representations. The following are equivalent:
\begin{enumerate}[label=(\alph*)]
\item\label{limitset} $\Lambda_{\rho_1\times\rho_2}\subset \partial_{\infty}^{\max}(X_1\times X_2)_{\reg}$ .
\item\label{ce}$\rho_1 \simeq_{C.E.} \rho_2$.
\item \label{samemls}$\tau(\rho_1(\gamma)) = \tau(\rho_2(\gamma))$ for all $\gamma\in\Gamma$.
\end{enumerate}
\end{proposition}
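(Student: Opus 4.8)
The plan is to establish the cycle $(a)\Leftrightarrow(b)$, $(b)\Rightarrow(c)$, and $(c)\Rightarrow(b)$, the last being the substantial one. Throughout I fix base points $o\in X_1$, $o'\in X_2$ and abbreviate $\ell_1(\gamma)=d_1(\rho_1(\gamma)o,o)$, $\ell_2(\gamma)=d_2(\rho_2(\gamma)o',o')$ and $\tau_i(\gamma)=\tau(\rho_i(\gamma))$. By equivariance, condition $(b)$ is equivalent to the uniform boundedness of $|\ell_1(\gamma)-\ell_2(\gamma)|$ over $\gamma\in\Gamma$. For $(a)\Leftrightarrow(b)$ I would invoke Proposition~\ref{boundary points}: a diverging orbit $(\rho_1(\gamma_n)x,\rho_2(\gamma_n)y)$ accumulates in the singular part exactly when $|d_1(\rho_1(\gamma_n)x,o)-d_2(\rho_2(\gamma_n)y,o')|\to\infty$, and in the regular part when this stays bounded. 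If $(b)$ holds, the triangle inequality gives $|d_1(\rho_1(\gamma_n)x,o)-\ell_1(\gamma_n)|\le d_1(x,o)$ and likewise in $X_2$, so the difference remains bounded and every accumulation point is regular, proving $(b)\Rightarrow(a)$. Conversely, if $(b)$ fails, choose $\gamma_n$ with $|\ell_1(\gamma_n)-\ell_2(\gamma_n)|\to\infty$; then the orbit of $(o,o')$ along $\gamma_n$ diverges and, after passing to a subsequence, accumulates at a singular point, so the large limit set is not regular. This is the contrapositive of $(a)\Rightarrow(b)$.

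The implication $(b)\Rightarrow(c)$ is immediate: applying coarse equivalence to the powers $\gamma^n$ gives $|\ell_1(\gamma^n)-\ell_2(\gamma^n)|\le C$, so dividing by $n$ and letting $n\to\infty$ yields $\tau_1(\gamma)=\tau_2(\gamma)$, since $\tau_i(\gamma)=\lim_n\ell_i(\gamma^n)/n$.

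The hard direction is $(c)\Rightarrow(b)$, a coarse marked-length-spectrum rigidity statement, for which I would pass to the boundary. Since $\rho_1,\rho_2$ are quasi-convex with finite kernel, by \cite{Bourdon} the orbit maps extend to equivariant homeomorphisms $\partial_\infty\Gamma\to\Lambda_i\subset\partial_\infty X_i$, giving an equivariant homeomorphism $F\colon\Lambda_1\to\Lambda_2$. I would use two ingredients. First, the coarse relation valid in a $\delta$-hyperbolic space for a loxodromic $\gamma$ with fixed points $\gamma^\pm_i\in\Lambda_i$, namely $\ell_i(\gamma)=\tau_i(\gamma)+2\,(\gamma^+_i\mid\gamma^-_i)_{o_i}+O(\delta)$, where $(\cdot\mid\cdot)_{o_i}$ is the Gromov product extended to the boundary \cite{Buyalo-Schroeder}; the finitely many torsion conjugacy classes have bounded $\ell_i$ and are handled trivially. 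Second, the fact that the marked length spectrum determines the boundary cross-ratio on the limit set, which is Bourdon's correspondence between translation lengths and the Möbius structure at infinity of a $\CAT(-1)$ space. I would argue that $\tau_1=\tau_2$ forces $F$ to preserve the cross-ratio: translation lengths of products of loxodromics recover the cross-ratios of their fixed points, which are dense in $\partial_\infty\Gamma\times\partial_\infty\Gamma$, so by continuity $F$ is Möbius. Now two boundary two-point functions with equal cross-ratio differ by a coboundary, i.e.\ $(\gamma^+_2\mid\gamma^-_2)_{o'}-(\gamma^+_1\mid\gamma^-_1)_{o}=u(\gamma^+)+u(\gamma^-)+\text{const}$ for some continuous $u$ on the compact space $\partial_\infty\Gamma$, which is therefore bounded. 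Substituting into the coarse relation and using $\tau_1=\tau_2$ gives $|\ell_1(\gamma)-\ell_2(\gamma)|\le 2\sup|u|+O(\delta)$ uniformly in $\gamma$, which is exactly $(b)$.

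The main obstacle is $(c)\Rightarrow(b)$: upgrading the equality of the scalar invariants $\tau_i$ to a uniform additive bound on the displacement functions. The two-point Gromov product of a pair of fixed points is \emph{not} determined by the translation lengths of individual elements, so one genuinely needs the four-point cross-ratio and the fact that translation lengths of products of loxodromics recover it; the decisive final step is that the resulting discrepancy is a coboundary given by a continuous function on the compact boundary, so compactness is what converts ``asymptotically equal'' into ``uniformly close''.
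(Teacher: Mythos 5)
Your treatment of $(a)\Leftrightarrow(b)$ and of $(b)\Rightarrow(c)$ is correct and coincides with the paper's: the paper proves $(a)\Leftrightarrow(b)$ by exactly your triangle-inequality estimate (this is Lemma~\ref{independenceoforbit} together with Proposition~\ref{boundary points}), and $(b)\Rightarrow(c)$ is the easy half of Proposition~\ref{ceandtranslationlength}. Where you diverge is the hard implication $(c)\Rightarrow(b)$: the paper does not prove this from scratch but deduces it from the Furman--Krat theorem (Theorem~\ref{equivalencescoarse}) applied to the two left-invariant metrics $d_{\Gamma_i}(\gamma,\gamma')=d_i(\rho_i(\gamma)o_i,\rho_i(\gamma')o_i)$ on $\Gamma$, which are quasi-isometric to a word metric because the representations are quasi-convex. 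Your cross-ratio route is a genuine alternative, close in spirit to how marked-length rigidity is approached in the $\CAT(-1)$ literature, but as written it has a concrete gap.

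The gap is the sentence dismissing torsion: it is false that the conjugacy class of a torsion element has bounded $\ell_i$. For $\gamma_0$ of finite order and $g\in\Gamma$, $\ell_i(g\gamma_0g^{-1})=d_i(\rho_i(g)^{-1}o_i,\rho_i(\gamma_0)\rho_i(g)^{-1}o_i)$ grows roughly like twice the distance from $\rho_i(g)^{-1}o_i$ to the fixed set of $\rho_i(\gamma_0)$, hence is unbounded whenever $\Gamma$ is infinite (already in $\mathbb{Z}/2*\mathbb{Z}$ the conjugates $t^n\gamma_0t^{-n}$ have word length $2n+1$). These elements have $\tau_i=0$ and no pair of fixed points on $\partial_\infty X_i$, so your formula $\ell_i(\gamma)=\tau_i(\gamma)+2(\gamma_i^+\mid\gamma_i^-)_{o_i}+O(\delta)$ does not apply to them, and they are precisely elements for which the bound $|\ell_1(\gamma)-\ell_2(\gamma)|\le C$ still has to be established. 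A second, smaller omission: the hypotheses allow $\Gamma$ virtually cyclic, in which case the limit set has two points, there are no cross-ratios, and the density-of-fixed-point-pairs argument is vacuous; that case is easy but must be argued separately. Either supply an argument for torsion conjugates (for instance by comparing $\ell_i(\gamma)$ with $\ell_i(\gamma h^n)$ for a fixed loxodromic $h$ and large $n$), or do what the paper does and quote Furman--Krat, which handles all elements at once.
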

Here $\tau$ denotes the translation length of an isometry, see Definition~\ref{coarselyequal} below,
so condition \ref{samemls} means that the two representations have the same 
translation lengths. Condition \ref{ce} in Theorem \ref{equivalences} stands for 
the two representations being coarsely equivalent:
\begin{definition}\label{coarselyequal} The   representations $\rho_1$, $\rho_2$ 
are said to be coarsely equivalent, written $\rho_1 \simeq_{C.E.} \rho_2$, if 
there exists $C>0$ such that:
$$ |d_1(\rho_1(\gamma)o, \rho_1(\gamma')o) - d_2(\rho_2(\gamma)o', 
\rho_2(\gamma')o')| \leq C$$
for some $o\in X_1$, $o'\in X_2$ and for all $\gamma, \gamma'\in \Gamma$.
\end{definition} 
If the representations are coarsely equivalent for some base-points $o\in X_1$, $o'\in X_2$ then they are coarsely equivalent for any 
choice of base-points in $X_1$ and $X_2$. 
We will see later in Lemma \ref{independenceoforbit} how the bound depends on  the choice of base-points.
\begin{definition}\label{def:almostisometry}
 A $K$-almost-isometry between two metric spaces is a map $f\colon X_1\to X_2$ such that 
 $$
\vert d(x,y)-d(f(x),f(y))\vert\leq K,\quad \forall x, y\in K,
 $$
 and $X_2$ lies in the $K$-neighborhood of $f(X_1)$.
\end{definition}

If $\rho_1$ and $\rho_2$ are coarsely equivalent and cocompact, then the spaces 
$X_1$ and $X_2$ are equivariantly almost-isometric. 
In this case it is possible to find a subset $\Omega^{\max}_{\Gamma}$ of 
$\partial_{\infty}^{\max}(X_1\times X_2)$ where the product action is properly 
discontinuous and cocompact.  In Section \ref{compactificationproduct} we use 
the existence of this almost-isometry between $X_1$ and $X_2$ and its extension 
to the ideal boundaries of the spaces to prove the following theorem:

\begin{theorem}
\label{Thm:Omega2}
Let $X_1$, $X_2$ be proper $\mathrm{CAT}(-1)$ spaces and 
$\overline{X_1\times X_2}^{\max}$ the horofunction compactification with respect 
to $d_{\max}$. Let $\Gamma$ be a hyperbolic group and $\rho_1\colon  \Gamma 
\rightarrow \operatorname{Isom}(X_1)$, $\rho_2\colon  \Gamma \rightarrow \operatorname{Isom}(X_2)$ two cocompact discrete
representations with finite kernel. If $\rho_1$ and $\rho_2$ are coarsely equivalent, then there 
exists a subset $\Omega^{\max}_{\Gamma}\subset 
\partial_{\infty}^{\max}(X_1\times X_2)$ where the product action of $\Gamma$ is 
properly discontinuous and cocompact.
\end{theorem}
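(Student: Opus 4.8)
The plan is to reduce the statement to the diagonal case of Theorem~\ref{quasiconvexdomain} by replacing the second factor with the first one via an almost-isometry. Since $\rho_1,\rho_2$ are cocompact and discrete with finite kernel, both orbit maps $\gamma\mapsto\rho_1(\gamma)o$ and $\gamma\mapsto\rho_2(\gamma)o'$ are quasi-isometries onto $X_1$ and $X_2$, and in particular $\Lambda_{\rho_i(\Gamma)}=\partial_\infty X_i$, so the domains of discontinuity $\Omega_{\rho_i(\Gamma)}$ are empty and every relevant ideal domain will sit in the regular part. Coarse equivalence (Definition~\ref{coarselyequal}) says precisely that the correspondence $\rho_1(\gamma)o\mapsto\rho_2(\gamma)o'$ distorts distances by at most the additive constant $C$, i.e.\ has multiplicative constant $1$; after quotienting by the finite kernels and extending over a compact fundamental domain this produces a $\Gamma$-equivariant almost-isometry $f\colon X_1\to X_2$ in the sense of Definition~\ref{def:almostisometry}, with $o'=f(o)$.

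The concrete tool I would build is a \emph{continuous} equivariant coarse inverse. Using that $X_1$ is $\CAT(0)$, I would straighten the correspondence $\rho_2(\gamma)o'\mapsto\rho_1(\gamma)o$ by geodesic interpolation (barycentres) to obtain a continuous, $\Gamma$-equivariant almost-isometry $g\colon X_2\to X_1$, i.e.\ $g(\rho_2(\gamma)y)=\rho_1(\gamma)g(y)$. As a quasi-isometry of proper $\CAT(-1)$ spaces, $g$ extends to an equivariant homeomorphism $\partial g\colon\partial_\infty X_2\to\partial_\infty X_1$ \cite{Bourdon}, inverse to the boundary extension $\partial f$ of $f$. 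Via Proposition~\ref{joinprop}, which identifies $\partial_\infty^{\max}(X_1\times X_2)$ with $\join(\partial_\infty X_1,\partial_\infty X_2)$, the pair $(\mathrm{id},\partial g)$ induces a homeomorphism between the regular parts that on $\varphi_{\reg}$-coordinates reads $(\xi,\zeta,t)\mapsto(\xi,\partial g(\zeta),t)$; this is the coarse equivariant identification of the regular boundaries of $X_1\times X_2$ and $X_1\times X_1$ alluded to conceptually.

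Next I would define the projection $\Pi\colon X_1\times X_2\to X_1$ by sending $(x,y)$ to the midpoint (for $d_1$) of $x$ and $g(y)$. It is continuous and, because $g$ is equivariant, it intertwines the product action with the $\rho_1$-action on $X_1$. Mimicking the proof of the theorem on $\tilde\pi$, I would show $\Pi$ extends continuously to $\widetilde\Pi\colon\overline{X_1\times X_2}^{\max}\to\overline{X_1}$, running the very same Gromov-product estimates but now with the quantities $d_1(x,g(y))$, $\beta^{g(y)}_{\cdot}$, etc.\ carrying additive errors bounded by the almost-isometry constant of $g$; these errors do not affect whether a Gromov product tends to $+\infty$, so all the limit computations of that proof go through. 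I would then set
\[
\Omega^{\max}_\Gamma=\widetilde\Pi^{-1}(X_1)\cap\partial_\infty^{\max}(X_1\times X_2).
\]
Using $\varphi_{\reg}$ and $\partial g$, this set is $\bigl((\partial_\infty X_1\times\partial_\infty X_2)\setminus\operatorname{graph}(\partial f)\bigr)\times\mathbb R$, which by equivariance of $\partial f$ is exactly the complement of $\overline{\Lambda_{\rho_1\times\rho_2}}$ (the large limit set being the graph of $\partial f$ times $\mathbb R$, regular by Proposition~\ref{equivalences}, cf.\ Lemma~\ref{limit set} and Remark~\ref{homeoomega}); it is open because $\widetilde\Pi$ is continuous. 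Finally, $\widetilde\Pi$ is a continuous map from a compact space to a Hausdorff one, hence proper; pulling back a compact fundamental domain $\overline D\subset X_1$ for the cocompact $\rho_1$-action, the compact set $\widetilde\Pi^{-1}(\overline D)$ has $\Gamma$-translates covering $X_1\times X_2\cup\Omega^{\max}_\Gamma$, and proper discontinuity follows from the corresponding finiteness for compacta in $X_1$, exactly as in Theorem~\ref{quasiconvexdomain}.

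The hard part will be the two places where the almost-isometry, rather than a genuine isometry, intervenes. First, constructing $g$ so that it is simultaneously continuous, equivariant, and almost-isometric, and verifying that its boundary extension is an equivariant homeomorphism. Second, and more delicate, checking that the continuous extension of $\Pi$ to the $\max$-compactification survives the additive-$K$ slack coming from $g$: the regular ideal points carry the $\mathbb R$-coordinate $d_1(x_n,o)-d_2(y_n,o')$, which under $g$ is only controlled up to the constant $K$ and need not converge on its own, so the argument must be arranged so that the relevant Gromov products still diverge to $+\infty$ and the limit point of $\widetilde\Pi$ is forced regardless of this bounded indeterminacy. This is exactly the step where one leans on the honest continuity of $\widetilde\Pi$ for openness of $\Omega^{\max}_\Gamma$, while treating $g$ only coarsely; once it is established, properness plus the cocompact fundamental domain deliver both conclusions at once.
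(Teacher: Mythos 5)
Your overall strategy---transferring the problem to the diagonal action on $X_1\times X_1$ through an equivariant almost-isometry, and taking $\Omega^{\max}_\Gamma$ to be the complement of $\operatorname{graph}(f_\infty)\times\mathbb{R}$ in the regular part---is the same as the paper's. The gap is in the central technical claim: that the midpoint projection $\Pi(x,y)=\operatorname{mid}(x,g(y))$ extends \emph{continuously} to $\overline{X_1\times X_2}^{\max}$. It does not, and the obstruction is exactly the additive slack you flag at the end but do not resolve. If $(x_n,y_n)$ converges to a regular ideal point $(\xi,\zeta,t)$ with $\partial g(\zeta)\neq\xi$, then $d_1(x_n,o)-d_2(y_n,o')\to t$, but the quantity governing the midpoint of $x_n$ and $g(y_n)$ is $d_1(x_n,o)-d_1(g(y_n),o)$, which is only bounded (within $K+d_1(o,g(o'))$ of $t$) and need not converge: it depends on the sequence, not on its limit. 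By Remark~\ref{identificationgeodesics}, two sequences converging to the same $(\xi,\zeta,t)$ whose quantities $d_1(x_n,o)-d_1(g(y_n),o)$ converge to different constants produce midpoints converging to \emph{different} points of $X_1$, shifted along the geodesic from $\partial g(\zeta)$ to $\xi$. So no continuous $\widetilde\Pi$ exists at off-graph regular points---precisely where you need $\widetilde\Pi$ to land in $X_1$ so that $\widetilde\Pi^{-1}(\overline D)$ is a compact fundamental set. The Gromov-product argument forces the limit only when that limit is an ideal point (singular or on the graph), not when it is an interior point of $X_1$. Ruling this out would require $g$ to pull back Busemann functions to Busemann functions up to constants, which does not follow from coarse equivalence. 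Without continuity, the properness of $\widetilde\Pi$ and the pullback of a Dirichlet domain, i.e.\ the entire second half of your argument, collapse.

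The paper circumvents exactly this point by renouncing continuity: it builds a transfer map $F\colon\Omega_1\to\Omega_2$ that is explicitly allowed to be non-continuous, proves coarse bounds $\vert h_2(F(\gamma z))-h_2(\gamma F(z))\vert\leq 4K$ and $\vert h_1(\gamma z)-h_2(\gamma y)\vert\leq 6K$ for matched points (Lemmas~\ref{lemma:bounds} and~\ref{lemma:6K}), and then establishes proper discontinuity via the dynamical-relation criterion and cocompactness via a sequence argument (Proposition~\ref{prop:omega2} and Lemma~\ref{lemma:omega12}). Bounded, non-convergent errors in the $\mathbb{R}$-coordinate are harmless for deciding accumulation in the open set $\Omega_2$, whereas they are fatal for pointwise convergence of midpoints. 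To repair your proof you would have to replace the continuous $\widetilde\Pi$ by this kind of coarse bookkeeping of the $h$-coordinate, which is essentially the paper's argument.
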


\subsection{Regular limit sets and coarsely equivalent representations}
\label{regularlimitsets}
Let $\Gamma$ be a group acting on a space $X$ with two metrics $d_1$ and $d_2$ that are $\Gamma$-invariant.
\begin{definition}Two metrics $d_1$ and $d_2$  on a space $X$ are coarsely equivalent if there exists a constant $C\geq 0$ such that for all $x$, $y\in X$,
$$|d_1(x,y) - d_2(x,y)|\leq C.$$
\end{definition}
\begin{definition}\label{samemkls}
 The metrics $d_1$, $d_2$ have the same marked length spectrum with respect to the action of $\Gamma$ if $\tau_1(\gamma) = \tau_2(\gamma)$ for all $\gamma\in \Gamma$, where $\tau_i(\gamma)$ is the translation length of $\gamma$ for $d_i$ defined by 
$$\tau_i(\gamma) = \lim_{n\rightarrow\infty} \frac{d_i(x, \gamma^n(x))}{n}$$
for any $x\in X$.
\end{definition}
The equivalence of both definitions for hyperbolic groups follows from results of Furman \cite{Furman} and Krat \cite{Krat}. 
\begin{theorem}\label{equivalencescoarse}(Furman, Krat) Let $\Gamma$ be a 
hyperbolic group acting on itself with two left invariant metrics $d_1$, $d_2$ 
which are quasi-isometric to a word metric by the identity map. Then $d_1$ and $d_2$ are coarsely 
equivalent if and only if $d_1$ and $d_2$ have the same marked length spectrum.
\end{theorem}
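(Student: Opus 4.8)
The plan is to prove the two implications separately, the forward one being elementary and the reverse one being the substantive content of the cited work of Furman and Krat. Throughout I would use that, by left-invariance, $d_i(x,y)=d_i(e,x^{-1}y)$, so coarse equivalence of $d_1$ and $d_2$ is exactly the uniform bound $\sup_{g\in\Gamma}|d_1(e,g)-d_2(e,g)|<\infty$. For the implication ``coarsely equivalent $\Rightarrow$ same marked length spectrum'', suppose $|d_1(e,g)-d_2(e,g)|\le C$ for all $g$. Applying this to $g=\gamma^n$ gives $|d_1(e,\gamma^n)-d_2(e,\gamma^n)|\le C$; dividing by $n$ and letting $n\to\infty$, the left-hand side tends to $|\tau_1(\gamma)-\tau_2(\gamma)|$ by Definition~\ref{samemkls}, while the bound $C/n$ tends to $0$. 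Hence $\tau_1(\gamma)=\tau_2(\gamma)$ for every $\gamma$.

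For the reverse implication the goal is to deduce the uniform bound $\sup_g|d_1(e,g)-d_2(e,g)|<\infty$ from the equality $\tau_1=\tau_2$ on all of $\Gamma$. First I would record the geometric setup: since each $d_i$ is quasi-isometric to the word metric through the identity, $(\Gamma,d_i)$ is Gromov-hyperbolic, the identity extends to one and the same boundary $\partial_\infty\Gamma$, every infinite-order element acts loxodromically with the same pair of fixed points, and for each metric the stable translation length agrees with the minimal displacement up to a constant depending only on the hyperbolicity constant, $|\tau_i(\gamma)-\min_x d_i(x,\gamma x)|\le K_i$. Thus the marked length spectrum already determines the displacement data of each metric up to bounded error; what remains is to upgrade this pointwise information into a global, uniform comparison of the two metrics.

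The conceptual core, following Furman, is to encode each metric dynamically. On the abstract geodesic flow space of $\Gamma$ (Gromov--Mineyev), a proper metric space carrying commuting $\Gamma$- and $\mathbb{R}$-actions whose periodic orbits modulo $\Gamma$ correspond to conjugacy classes of infinite-order elements, each metric $d_i$ determines a positive H\"older reparametrization function $f_i$ whose period over the orbit attached to the class of $\gamma$ equals $\tau_i(\gamma)$. The hypothesis $\tau_1=\tau_2$ says precisely that $f_1$ and $f_2$ have equal periods over every periodic orbit, so by the Liv\v{s}ic periodic-orbit theorem for hyperbolic flows the difference $f_1-f_2$ is a coboundary with H\"older transfer function. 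Unwinding the construction, this transfer function bounds the difference of the two displacement functions and yields the desired uniform estimate $\sup_g|d_1(e,g)-d_2(e,g)|<\infty$, that is, coarse equivalence.

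I expect the main obstacle to be exactly this last step: verifying that a left-invariant metric quasi-isometric to the word metric descends to a \emph{sufficiently regular} reparametrization of the abstract flow, with periods genuinely equal to the translation lengths, and that the Liv\v{s}ic transfer function translates back into an honest bound on $|d_1-d_2|$ rather than a merely asymptotic one. This regularity analysis is the technical heart of the arguments of Furman and of Krat, the latter proceeding more combinatorially by controlling the fellow-traveling of $d_1$- and $d_2$-quasigeodesics. Since we invoke Theorem~\ref{equivalencescoarse} only as a black box, I would cite these estimates rather than reproduce them.
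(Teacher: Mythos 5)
The paper states this result purely as a citation to Furman and Krat and supplies no proof of its own, so there is nothing internal to compare against; your treatment is consistent with that, and your elementary argument for the implication ``coarsely equivalent $\Rightarrow$ equal translation lengths'' (bounding $|d_1(e,\gamma^n)-d_2(e,\gamma^n)|/n\le C/n$ and letting $n\to\infty$) is correct. For the converse you rightly identify that the substance lies in the cited works (Liv\v{s}ic-type rigidity in Furman, fellow-traveling estimates in Krat), and deferring to them as a black box is exactly what the paper itself does.
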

We are interested in a hyperbolic group acting on two proper $\text{CAT(-1)}$ spaces $(X_1, d_1)$, $(X_2, d_2)$ via quasi-convex representations $\rho_1$, $\rho_2$ into their respective groups of isometries. A hyperbolic group  $\Gamma$ acts on itself by left translations. Moreover, fixing $o_i\in X_i$, each representation $\rho_i$ induces an orbit map from $\Gamma$ to the target space $X_i$ for $i=1,2$:
\begin{align*}
O_i\colon \Gamma &\rightarrow X_i\\
\gamma &\mapsto \rho_i(\gamma)o_i
\end{align*}
These orbit maps  induce left invariant metrics $d_{\Gamma_i}$ in $\Gamma$ by:
$$d_{\Gamma_i}(\gamma, \gamma') = d_{i}(\rho_i(\gamma)o_i, \rho_i(\gamma')o_i)$$
so that for $i=1,2$, $(\Gamma, d_{\Gamma_i})$ are $\Gamma$ invariant metric 
spaces. Moreover, since $(X_i, d_i)$ are proper $\mathrm{CAT}(-1)$ spaces and 
the representations are quasi-convex, these metrics are quasi-isometric to a word 
metric by the identity map, see  \cite{Bourdon}. 
\begin{remark}
\label{ceandrepresentations} The metrics $d_{\Gamma_1}$ and $d_{\Gamma_2}$
are coarsely equivalent if and only if $\rho_1\simeq_{C.E.} \rho_2$.
Indeed, in both cases the condition to be satisfied is that there exists a 
constant $C$ such that $$ |d_1(\rho_1(\gamma)o_1, \rho_1(\gamma')o_1) - 
d_2(\rho_2(\gamma)o_2, \rho_2(\gamma')o_2)| \leq C$$
for some $o_1\in X_1$, $o_2\in X_2$ and for all $\gamma, \gamma'\in \Gamma$.
\end{remark}
%

Using  Remark~\ref{ceandrepresentations}, Theorem~\ref{equivalencescoarse} yields:
\begin{proposition}
\label{ceandtranslationlength} Let $X_1$, $X_2$ be proper $\mathrm{CAT}(-1)$ spaces.
Let $\Gamma$ be an infinite hyperbolic group and $\rho_1\colon  \Gamma 
\rightarrow \operatorname{Isom}(X_1)$, $\rho_2\colon  \Gamma \rightarrow 
\operatorname{Isom}(X_2)$ two quasi-convex representations. Then:
$$\rho_1 \simeq_{C.E.} \rho_2 \Leftrightarrow \tau(\rho_1(\gamma))= 
\tau(\rho_2(\gamma))$$
for all $\gamma\in \Gamma$.
\end{proposition}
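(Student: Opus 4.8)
The plan is to transfer the statement to the two induced left-invariant metrics on $\Gamma$ and then invoke the Furman–Krat theorem (Theorem~\ref{equivalencescoarse}). Fixing base-points $o_i\in X_i$, the orbit maps induce the $\Gamma$-invariant metrics $d_{\Gamma_i}(\gamma,\gamma')=d_i(\rho_i(\gamma)o_i,\rho_i(\gamma')o_i)$ on $\Gamma$. First I would recall, from Remark~\ref{ceandrepresentations}, that $\rho_1\simeq_{C.E.}\rho_2$ holds if and only if $d_{\Gamma_1}$ and $d_{\Gamma_2}$ are coarsely equivalent. Since each $\rho_i$ is quasi-convex and $X_i$ is a proper $\mathrm{CAT}(-1)$ space, the orbit map is a quasi-isometric embedding, so $d_{\Gamma_i}$ is quasi-isometric to a word metric on $\Gamma$ by the identity \cite{Bourdon}; thus the hypotheses of Theorem~\ref{equivalencescoarse} are satisfied.

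Next I would apply Theorem~\ref{equivalencescoarse} to the pair $(d_{\Gamma_1},d_{\Gamma_2})$: coarse equivalence of these two metrics is equivalent to their having the same marked length spectrum, that is, $\tau_1(\gamma)=\tau_2(\gamma)$ for all $\gamma\in\Gamma$, where by Definition~\ref{samemkls}
$$
\tau_i(\gamma)=\lim_{n\to\infty}\frac{d_{\Gamma_i}(e,\gamma^n)}{n}
=\lim_{n\to\infty}\frac{d_i\bigl(o_i,\rho_i(\gamma)^n o_i\bigr)}{n}.
$$

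The remaining step, and the one I expect to be the crux, is to identify this orbit-computed marked length $\tau_i(\gamma)$ with the intrinsic translation length $\tau(\rho_i(\gamma))$ of the isometry $\rho_i(\gamma)$ of $X_i$. Here I would use the standard fact that, for an isometry $g$ of a proper $\mathrm{CAT}(0)$ space, the stable translation length $\lim_n d(x,g^n x)/n$ is independent of $x$ and, when $g$ is semisimple, coincides with the minimal displacement $\inf_x d(x,gx)=\tau(g)$. Since $\rho_i$ has finite kernel, an element $\gamma$ of infinite order maps to a loxodromic (hence semisimple) isometry of the $\mathrm{CAT}(-1)$ space $X_i$, whose axis realizes $\tau(\rho_i(\gamma))$, while for $\gamma$ of finite order both sides vanish. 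In either case $\tau_i(\gamma)=\tau(\rho_i(\gamma))$, so the condition $\tau_1=\tau_2$ on $\Gamma$ is precisely $\tau(\rho_1(\gamma))=\tau(\rho_2(\gamma))$ for all $\gamma$, and chaining the equivalences yields the proposition.

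The main obstacle is exactly the identification in the last paragraph: one must ensure the orbit-computed stable length equals the intrinsic translation length, which rests on the semisimplicity (loxodromic nature) of $\rho_i(\gamma)$ guaranteed by quasi-convexity together with the finiteness of the kernel. Everything else is a formal concatenation of Remark~\ref{ceandrepresentations} with Theorem~\ref{equivalencescoarse}.
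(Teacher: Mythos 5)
Your proposal is correct and follows essentially the same route as the paper, which deduces the proposition directly from Remark~\ref{ceandrepresentations} together with the Furman--Krat theorem (Theorem~\ref{equivalencescoarse}) applied to the orbit metrics $d_{\Gamma_i}$. The only addition is your careful identification of the orbit-computed stable length with the intrinsic translation length via semisimplicity; the paper sidesteps this by taking the stable translation length $\lim_n d_i(x,\gamma^n x)/n$ as the definition of $\tau$ (Definition~\ref{samemkls}), under which the identification is immediate.
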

%

Now we will work towards the equivalence $(a)\Leftrightarrow(b)$ in Proposition~\ref{equivalences}. Recall that the \emph{large limit set} of 
the product action ${\rho_1\times\rho_2}$ is
 the union of the accumulation sets of all the orbits on $\partial_{\infty}^{\max}(X_1\times X_2)$:
$$\Lambda_{\rho_1\times\rho_2} = \displaystyle \bigcup_{(x,y)\in X_1\times X_2} \overline{ \{ (\rho_1(\gamma)x,\rho_2(\gamma)y)\mid\gamma\in \Gamma \} }
\cap \partial_{\infty}^{\max}(X_1\times X_2).$$
Recall also that the regular part of the ideal boundary can be identified with the product of the ideal boundaries of each factor and $\mathbb{R}$:
$$\partial_{\infty}(X_1\times X_2)_{\reg} \cong \partial X_1 \times \partial X_2 \times \mathbb{R}.$$
Fixing a base point $(o,o')\in X_1\times X_2$, a sequence $(x_n, y_n)\subset X_1\times X_2$ converges to a point $(\xi, \xi', C)$ in the regular part if:
$$x_n \rightarrow \xi\in \partial X_1,$$
$$y_n \rightarrow \xi' \in \partial X_2,\text{ and }$$
$$d_1(x_n, o) - d_2(y_n,o')\rightarrow C\in\mathbb{R}.$$

\begin{proposition}
If $\Lambda_{\rho_1\times\rho_2}\subset\partial_{\infty}^{\max}(X_1\times X_2)_{\reg}$ then $\rho_1\simeq_{C.E.}\rho_2$.
\end{proposition}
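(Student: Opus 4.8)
The plan is to argue by contraposition: assuming $\rho_1 \not\simeq_{C.E.} \rho_2$, I will exhibit a point of the large limit set $\Lambda_{\rho_1\times\rho_2}$ lying in the singular part of the boundary, contradicting the hypothesis. The starting observation is that, by Remark~\ref{ceandrepresentations} together with the left-invariance of the metrics $d_{\Gamma_1}, d_{\Gamma_2}$, coarse equivalence of $\rho_1$ and $\rho_2$ is equivalent to the boundedness of the single-variable quantity $\gamma\mapsto |d_1(\rho_1(\gamma)o, o) - d_2(\rho_2(\gamma)o', o')|$. Hence, if the two representations are not coarsely equivalent, there is a sequence $(\gamma_n)$ in $\Gamma$ with $|d_1(\rho_1(\gamma_n)o, o) - d_2(\rho_2(\gamma_n)o', o')| \to \infty$; after passing to a subsequence and, if necessary, swapping the roles of $X_1$ and $X_2$, I may assume the difference tends to $+\infty$.

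Next I would track the orbit of the base point $(o,o')$ along this sequence. Since $d_1(\rho_1(\gamma_n)o, o) - d_2(\rho_2(\gamma_n)o', o') \to +\infty$ forces $d_1(\rho_1(\gamma_n)o, o)\to +\infty$, the orbit points $(\rho_1(\gamma_n)o, \rho_2(\gamma_n)o')$ diverge in $(X_1\times X_2, d_{\max})$. I then apply Proposition~\ref{boundary points}: after passing to a further subsequence this diverging sequence converges in $\overline{X_1\times X_2}^{\max}$, and because $|d_1(\rho_1(\gamma_n)o, o) - d_2(\rho_2(\gamma_n)o', o')|\to\infty$ we are precisely in case (I), so the limit is a Busemann function $\beta^o_{\xi}$ in the first factor, i.e.\ a point of $\partial_{\infty}^{\max}(X_1\times X_2)_{\sing}$.

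To close the argument, note that this singular point is by construction an accumulation point of the orbit $\{(\rho_1(\gamma)o, \rho_2(\gamma)o')\mid \gamma\in\Gamma\}$, hence it belongs to $\Lambda_{\rho_1\times\rho_2}$. Being singular, it contradicts the assumption $\Lambda_{\rho_1\times\rho_2}\subset \partial_{\infty}^{\max}(X_1\times X_2)_{\reg}$. This establishes the contrapositive and yields $\rho_1\simeq_{C.E.}\rho_2$.

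I expect the whole argument to be essentially bookkeeping once the initial reduction is in place; the steps requiring care are the two translations that make everything fit. First, the passage from coarse equivalence of $\rho_1,\rho_2$ (a two-variable condition on $d_1,d_2$) to boundedness of a one-variable difference of distances, which is exactly where left-invariance of $d_{\Gamma_1}, d_{\Gamma_2}$ and Remark~\ref{ceandrepresentations} are used. Second, the correct invocation of case (I) of Proposition~\ref{boundary points}, which is what certifies that a diverging difference $|d_1-d_2|$ produces a \emph{singular} rather than a regular boundary point; this is the sole geometric input of the proof.
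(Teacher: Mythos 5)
Your argument is correct and is essentially the paper's own proof, just written out in more detail: the paper likewise argues by contradiction, extracting from the failure of coarse equivalence a sequence $\gamma_n$ with $|d_1(o,\rho_1(\gamma_n)o)-d_2(o',\rho_2(\gamma_n)o')|$ unbounded and observing that the orbit of the base point then accumulates in the singular part. Your two ``translations'' (reducing the two-variable condition to a one-variable one via left-invariance, and invoking case (I) of Proposition~\ref{boundary points}) are exactly the steps the paper leaves implicit, and both are valid.
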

\begin{proof}
Suppose that $\rho_1$ and $\rho_2$ are not coarsely equivalent. This means that there is a sequence $\gamma_n$ in $\Gamma$
such that 
$|d_1(o, \rho_1(\gamma_n)o)-d_2(o', \rho_2(\gamma_n)o')|$ is unbounded. By definition of singular points, this means that 
$(\rho_1(\gamma_n)o, \rho_2(\gamma_n)o')$ accumulates in the singular part, which is a contradiction.
\end{proof}

For the implication $(b)\Rightarrow (a)$ we need a couple of lemmas. The first one is a direct consequence of the triangle inequality:
\begin{lemma}\label{triangleinequality} Let $x$, $y$, $z$ and $t$ be four points in a metric space $(X,d)$. Then:
$$|d(x,y) - d(z,t)| \leq d(x,z) + d(y,t).$$
\end{lemma}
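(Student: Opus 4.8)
The plan is to prove the two inequalities that together give the absolute value bound, each obtained by a double application of the triangle inequality chaining the four points through an intermediate pair. Since the statement is symmetric under the simultaneous swap $(x,y)\leftrightarrow(z,t)$, it suffices to establish one direction carefully and invoke symmetry for the other.

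First I would bound $d(x,y)$ by routing through $z$ and then $t$: the triangle inequality gives $d(x,y)\le d(x,z)+d(z,y)$ and then $d(z,y)\le d(z,t)+d(t,y)$, so that $d(x,y)\le d(x,z)+d(z,t)+d(y,t)$. Rearranging yields $d(x,y)-d(z,t)\le d(x,z)+d(y,t)$, which is the first of the two desired inequalities.

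Next I would apply the identical argument with the roles of the pairs $(x,y)$ and $(z,t)$ interchanged, routing $d(z,t)$ through $x$ and then $y$, to obtain $d(z,t)-d(x,y)\le d(x,z)+d(y,t)$. Combining these two inequalities, one of which bounds $d(x,y)-d(z,t)$ and the other its negative, immediately gives $|d(x,y)-d(z,t)|\le d(x,z)+d(y,t)$.

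There is no genuine obstacle here: the only point requiring a moment's care is matching the intermediate points correctly so that the terms collected on the right-hand side are exactly $d(x,z)$ and $d(y,t)$ rather than some other pairing, and noting that $d(t,y)=d(y,t)$ by symmetry of the metric. The whole argument is elementary and the lemma is recorded only because it is used repeatedly in the sequel.
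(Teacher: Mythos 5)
Your proof is correct and is exactly the standard argument the paper has in mind: it records the lemma as "a direct consequence of the triangle inequality," and the two chained applications of the triangle inequality (once in each direction, then combining to get the absolute value) are precisely that consequence. Nothing further is needed.
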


\begin{lemma}
\label{independenceoforbit}
If $\rho_1\simeq_{C.E.}\rho_2$ then for any $x\in X_1$, $y\in X_2$:
$$|d_1(\rho_1(\gamma)x, \rho_1(\gamma') x) - d_2(\rho_2(\gamma)y, \rho_2(\gamma')y)| \leq C + 2(d_1(x,o) + d_2(y,o'))$$
for all $\gamma$, $\gamma'$ in $\Gamma$ and for a $C$ depending only on $o$ and $o'$.
\end{lemma}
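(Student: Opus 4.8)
The plan is to reduce the statement to three applications of the triangle inequality, exploiting that each $\rho_i(\gamma)$ is an isometry to transport the estimate from the reference base points $o, o'$ to the arbitrary points $x, y$. Fix $\gamma,\gamma'\in\Gamma$. I would first bound the quantity of interest by inserting the orbit points of the reference base points as intermediaries, using the triangle inequality for the absolute value:
\begin{multline*}
|d_1(\rho_1(\gamma)x, \rho_1(\gamma')x) - d_2(\rho_2(\gamma)y, \rho_2(\gamma')y)|
\leq |d_1(\rho_1(\gamma)x, \rho_1(\gamma')x) - d_1(\rho_1(\gamma)o, \rho_1(\gamma')o)| \\
+ |d_1(\rho_1(\gamma)o, \rho_1(\gamma')o) - d_2(\rho_2(\gamma)o', \rho_2(\gamma')o')|
+ |d_2(\rho_2(\gamma)o', \rho_2(\gamma')o') - d_2(\rho_2(\gamma)y, \rho_2(\gamma')y)|.
\end{multline*}

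Next I would control the two outer terms separately. For the first, I apply Lemma~\ref{triangleinequality} to the four points $\rho_1(\gamma)x$, $\rho_1(\gamma')x$, $\rho_1(\gamma)o$, $\rho_1(\gamma')o$, which bounds that term by $d_1(\rho_1(\gamma)x, \rho_1(\gamma)o) + d_1(\rho_1(\gamma')x, \rho_1(\gamma')o)$. Since $\rho_1(\gamma)$ and $\rho_1(\gamma')$ are isometries, each of these two distances equals $d_1(x,o)$, so the first outer term is at most $2\,d_1(x,o)$. The symmetric argument in $X_2$, using that $\rho_2(\gamma)$ and $\rho_2(\gamma')$ are isometries, bounds the third term by $2\,d_2(y,o')$. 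Crucially, this is where the $\gamma$-dependence disappears: isometry invariance makes the displacement of the base point independent of $\gamma,\gamma'$.

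The middle term is precisely the coarse-equivalence hypothesis $\rho_1\simeq_{C.E.}\rho_2$ at the base points $o,o'$, hence bounded by $C$ for all $\gamma,\gamma'$. Adding the three bounds yields the claimed inequality $C + 2(d_1(x,o)+d_2(y,o'))$, uniformly in $\gamma,\gamma'$, with $C$ depending only on $o$ and $o'$. There is no genuine obstacle here: the lemma is an exercise in the triangle inequality, and the only thing to keep straight is the bookkeeping of which base points and which isometries appear in each factor. This also makes transparent the remark preceding the lemma that coarse equivalence for one pair of base points implies it for every pair.
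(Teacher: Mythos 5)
Your proof is correct and follows essentially the same route as the paper: insert the base-point orbit distances as intermediaries, bound the two outer terms by Lemma~\ref{triangleinequality} together with the isometry invariance (giving $2d_1(x,o)$ and $2d_2(y,o')$), and bound the middle term by the coarse-equivalence constant $C$.
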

\begin{proof}
We add and subtract $d_1(\rho_1(\gamma)o, \rho_1(\gamma')o)$ and $d_2(\rho_2(\gamma)o', \rho_2(\gamma') o')$ and apply the triangle inequality:
\begin{multline*}
|d_1(\rho_1(\gamma)x, \rho_1(\gamma') x) - d_2(\rho_2(\gamma)y, \rho_2(\gamma')y)|  \\
\leq  |d_1(\rho_1(\gamma)x, \rho_1(\gamma') x) - d_1(\rho_1(\gamma)o, \rho_1(\gamma')o)| \\
\quad \quad \quad \quad  + |d_1( \rho_1(\gamma)o , \rho_1(\gamma') o )-  d_2( \rho_2(\gamma) o', \rho_2(\gamma') o')| \\
  + |d_2(\rho_2(\gamma)o', \rho_2(\gamma') o')- d_2(\rho_2(\gamma)y, \rho_2(\gamma')y)|
\end{multline*}
Next we find a bound for each summand of the right-hand side. By Lemma~\ref{triangleinequality}:
\begin{multline*}
|d_1(\rho_1(\gamma)x, \rho_1(\gamma') x) - d_1(\rho_1(\gamma)o, \rho_1(\gamma')o)| \\
\leq d_1(\rho_1(\gamma)x,\rho_1(\gamma)o )+d_1(\rho_1(\gamma') x, \rho_1(\gamma')o) = 2d_1(x, o)
\end{multline*}
and
\begin{multline*}
|d_2(\rho_2(\gamma)o', \rho_2(\gamma')  o')-d_2(\rho_2(\gamma)y, \rho_2(\gamma')y)| \\
\leq  d_2(\rho_2(\gamma)o',\rho_2(\gamma)y) )+d_2(\rho_2(\gamma') o',\rho_2(\gamma')y)= 2d_2(y, o') .
\end{multline*}
In addition, by assumption:
$$|d_1( \rho_1(\gamma)o , \rho_1(\gamma') o )-  d_2( \rho_2(\gamma) o', \rho_2(\gamma') o')| \leq C,$$
so the result follows.
\end{proof}
\begin{remark} Observe that Lemma \ref{independenceoforbit} implies that the definition of coarse equivalence does not depend on the orbit.
\end{remark}



\begin{proposition}
If $\rho_1\simeq_{C.E.}\rho_2$ then $\Lambda_{\rho_1\times\rho_2}\subset\partial_{\infty}^{\max}(X_1\times X_2)_{\reg}$.
\end{proposition}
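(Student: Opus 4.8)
The plan is to reduce everything to a single uniform bound and then invoke Proposition~\ref{boundary points}. Recall from that proposition that a diverging sequence $(x_n,y_n)$ in $(X_1\times X_2,d_{\max})$ accumulates in the regular part exactly when $|d_1(x_n,o)-d_2(y_n,o')|$ stays bounded (case (II)), and in the singular part when this difference diverges (case (I)). So, fixing the base point $O=(o,o')$ and an arbitrary $(x,y)\in X_1\times X_2$, it suffices to show that
$$
|d_1(\rho_1(\gamma)x,o)-d_2(\rho_2(\gamma)y,o')|
$$
is bounded uniformly in $\gamma\in\Gamma$; then every boundary accumulation point of the orbit of $(x,y)$ is regular, and since the large limit set $\Lambda_{\rho_1\times\rho_2}$ is the union of such accumulation sets over all $(x,y)$, it lies entirely in $\partial_{\infty}^{\max}(X_1\times X_2)_{\reg}$.

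First I would compare the orbit of $(x,y)$ with the orbit of $O$. As the $\rho_i$ act by isometries, $d_1(\rho_1(\gamma)x,\rho_1(\gamma)o)=d_1(x,o)$ and $d_2(\rho_2(\gamma)y,\rho_2(\gamma)o')=d_2(y,o')$, so two applications of the triangle inequality give
$$
|d_1(\rho_1(\gamma)x,o)-d_2(\rho_2(\gamma)y,o')|\le |d_1(\rho_1(\gamma)o,o)-d_2(\rho_2(\gamma)o',o')|+d_1(x,o)+d_2(y,o').
$$
The first term on the right is now controlled by coarse equivalence: taking $\gamma'=e$ in Definition~\ref{coarselyequal} and using $\rho_i(e)=\mathrm{id}$ (so $\rho_1(e)o=o$ and $\rho_2(e)o'=o'$) gives $|d_1(\rho_1(\gamma)o,o)-d_2(\rho_2(\gamma)o',o')|\le C$. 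Hence
$$
|d_1(\rho_1(\gamma)x,o)-d_2(\rho_2(\gamma)y,o')|\le C+d_1(x,o)+d_2(y,o')
$$
for all $\gamma\in\Gamma$, which is the desired uniform bound. This is essentially Lemma~\ref{independenceoforbit} specialized to $\gamma'=e$, together with one triangle inequality to pass from $x,y$ to the base points $o,o'$.

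With this bound the conclusion follows directly from Proposition~\ref{boundary points}. I do not anticipate a genuine obstacle: the entire content sits in the displayed estimate, which is a soft consequence of coarse equivalence and the triangle inequality. The one point worth stating explicitly is that the bound also excludes the degenerate possibility that one factor stays bounded while the other escapes to infinity — the case-(I) scenario flagged just before Proposition~\ref{boundary points} — since such behaviour would force $|d_1-d_2|\to\infty$ and contradict the estimate; thus every diverging orbit sequence genuinely falls in case (II), and its limit is regular.
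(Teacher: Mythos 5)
Your proof is correct and follows essentially the same route as the paper: the paper also reduces to the uniform bound $|d_1(\rho_1(\gamma)x,o)-d_2(\rho_2(\gamma)y,o')|\leq d_1(x,o)+d_2(y,o')+C$, obtained by applying the coarse-equivalence bound at the base points (Lemma~\ref{independenceoforbit} with $\gamma'=\mathrm{Id}$) and then the triangle inequality, and concludes via the case dichotomy of Proposition~\ref{boundary points}.
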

\begin{proof}
We want to show that the sequences of the form $(\rho_1(\gamma_n)x, \rho_2(\gamma_n)y)$ accumulate
in the regular part; 
equivalently $|d_1(\rho_1(\gamma_n)x, o) - d_2(\rho_2(\gamma_n)y, o')|$ is 
bounded, so every limit point of the sequence is in the regular part. 
Applying Lemma \ref{independenceoforbit} with $x=o$, $y=o'$, $\gamma=\gamma_n$ and $\gamma '= \mathrm{Id}$ we get that $|d_1(\rho_1(\gamma_n)o, o) - d_2(\rho_2(\gamma_n)o', o')| \leq C$. 
Then, it follows from the triangle inequality that $|d_1(\rho_1(\gamma_n)x, o) - d_2(\rho_2(\gamma_n)y, o')| \leq d_1(x,o) + d_2(y, o') + C$.
\end{proof}

\subsection{Compactification of product actions}
\label{compactificationproduct}
In this section we consider $\rho_1\colon  \Gamma\rightarrow 
\operatorname{Isom}(X_1)$, $\rho_2\colon  \Gamma\rightarrow 
\operatorname{Isom}(X_2)$ two discrete 
cocompact coarsely equivalent representations. We will show that, as in the 
diagonal case, there exists an open subset $\Omega\subset \partial^{\max}_\infty(X_1\times X_2)$ 
of the ideal 
boundary such that the product action on $X_1\times X_2\cup 
\Omega$ 
 is properly discontinuous and cocompact.

\begin{lemma}
\label{lemma:ai}
 If $\rho_1\colon  \Gamma\rightarrow \operatorname{Isom}(X_1)$, $\rho_2\colon  \Gamma\rightarrow \operatorname{Isom}(X_2)$ are coarsely equivalent cocompact representations, then there exists an equivariant almost-isometry $f\colon X_1\rightarrow X_2$.
\end{lemma}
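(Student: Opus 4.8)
The plan is to build $f$ by fixing its values on a bounded fundamental domain and then transporting them by the group, so that equivariance holds \emph{exactly} and the only error in the distance comparison is additive and governed by the coarse-equivalence constant. Since $\rho_1$ is discrete, cocompact and has finite kernel, the action of $\Gamma$ on $X_1$ (through $\rho_1$) is properly discontinuous and cocompact, so there is a bounded set $F\subset X_1$ meeting every orbit exactly once; put $D_1=\sup_{x_0\in F}d_1(x_0,o_1)<\infty$. First I would fix the values of $f$ on $F$: for $x_0\in F$ the stabilizer $S_{x_0}=\{\gamma\in\Gamma:\rho_1(\gamma)x_0=x_0\}$ is finite, hence $\rho_2(S_{x_0})$ is a finite group of isometries of the $\CAT(0)$ space $X_2$, which fixes the circumcenter of the bounded orbit $\rho_2(S_{x_0})\,o_2$ (the center of a bounded set, see \cite{BH}); I would set $f(x_0)$ equal to this circumcenter. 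Every $x\in X_1$ is $\rho_1(\gamma)x_0$ for a unique $x_0\in F$ and a $\gamma$ unique up to right multiplication by $S_{x_0}$, so defining $f(\rho_1(\gamma)x_0)=\rho_2(\gamma)f(x_0)$ is unambiguous precisely because $f(x_0)$ is $\rho_2(S_{x_0})$-fixed, and it is equivariant by construction.

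Second, I would record the uniform bounds. Because the $\rho_1$-action is properly discontinuous and $F$ is bounded, the set $\Sigma=\{\gamma\in\Gamma:\rho_1(\gamma)\overline F\cap\overline F\neq\emptyset\}$ is finite and contains $\bigcup_{x_0\in F}S_{x_0}$; hence $D_2:=\max_{\gamma\in\Sigma}d_2(\rho_2(\gamma)o_2,o_2)<\infty$ and $d_2(f(x_0),o_2)\le D_2$ for every $x_0\in F$, so $f|_F$ has bounded image.

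Third comes the almost-isometry estimate, which is where coarse equivalence enters. Writing $x=\rho_1(\gamma)x_0$ and $y=\rho_1(\gamma')y_0$ with $x_0,y_0\in F$, two applications of Lemma~\ref{triangleinequality} (after cancelling the isometries $\rho_1(\gamma),\rho_1(\gamma')$ and $\rho_2(\gamma),\rho_2(\gamma')$) give $|d_1(x,y)-d_1(\rho_1(\gamma)o_1,\rho_1(\gamma')o_1)|\le 2D_1$ and $|d_2(f(x),f(y))-d_2(\rho_2(\gamma)o_2,\rho_2(\gamma')o_2)|\le 2D_2$. Coarse equivalence (Definition~\ref{coarselyequal}, via Remark~\ref{ceandrepresentations}) bounds $|d_1(\rho_1(\gamma)o_1,\rho_1(\gamma')o_1)-d_2(\rho_2(\gamma)o_2,\rho_2(\gamma')o_2)|$ by $C$, so the triangle inequality yields $|d_1(x,y)-d_2(f(x),f(y))|\le C+2D_1+2D_2$. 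Coarse surjectivity then follows from cocompactness of $\rho_2$: every point of $X_2$ lies within some $R_2$ of the orbit $\rho_2(\Gamma)o_2$, and each $\rho_2(\gamma)o_2$ lies within $D_2$ of $f(\rho_1(\gamma)x_0)\in f(X_1)$. Taking $K$ to be the maximum of these constants exhibits $f$ as an equivariant $K$-almost-isometry.

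The main obstacle I anticipate is not the metric estimate — that is essentially forced by coarse equivalence and is purely additive — but making the map honestly equivariant in the presence of the finite kernel and of nontrivial finite point-stabilizers (elements $\gamma$ with $\rho_1(\gamma)x_0=x_0$ yet $\rho_2(\gamma)\neq\mathrm{Id}$). The resolution is to use the $\CAT(0)$ fixed-point property to choose each value $f(x_0)$ to be fixed by $\rho_2(S_{x_0})$, and to use proper discontinuity to bound all relevant stabilizer elements uniformly by the finite set $\Sigma$, which keeps the additive constant finite.
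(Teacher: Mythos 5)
Your proof is correct and follows essentially the same route as the paper's, which merely notes that each $X_i$ is equivariantly almost-isometric to an orbit and that coarse equivalence makes the two orbits equivariantly almost-isometric; you have written out that composite map explicitly and verified the additive constants. The one detail you add beyond the paper's sketch --- choosing $f(x_0)$ to be the circumcenter of $\rho_2(S_{x_0})o_2$ so that the equivariant extension is well defined at points with nontrivial stabilizer --- is a genuine gap-filler rather than a change of strategy.
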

\begin{proof} Since the action is cocompact on both spaces $X_1$ and $X_2$, each 
of these spaces is equivariantly almost-isometric to any orbit of $\Gamma$. The 
condition of coarse equivalence implies that the orbits of $\Gamma$ in $X_1$ are 
equivariantly almost-isometric to the orbits of $\Gamma$ in $X_2$.  
\end{proof}

\begin{remark}\label{almostorbits} 
The almost-isometry $f$ is not unique, because of the different  choices of orbits and choices of almost-isometries between the space $X_i$ and the orbit.
\end{remark}

To find $\Omega\subset \partial^{\max}_\infty(X_1\times X_2)$
such that $\Gamma$ acts properly discontinuously and cocompactly on
$X_1\times X_2\cup \Omega$, the basic idea is to use the map
$$
\mathrm{Id}\times f\colon X_1\times X_1\to X_1\times X_2
$$
(where $f\colon X_1\to X_2$ is the almost-isometry of Lemma~\ref{lemma:ai})
to translate the properties of the  
 diagonal action $\rho_1\times\rho_1$ on $ X_1\times X_1$
to the product action $\rho_1\times\rho_2$ on $X_1\times X_2$.

The  almost-isometry $f$  of Lemma~\ref{lemma:ai}  has an almost-inverse 
$f^{-1}\colon  X_2\rightarrow X_1$ such that: $$d_1(f^{-1}(f(x_1)), x_1)\leq K 
\qquad \text{ and }\qquad  d_2(f(f^{-1}(x_2)), x_2) \leq K,$$
for all $x_1\in X_1$ and $x_2\in X_2$.
Let $f$ be one of the almost-isometries in Remark \ref{almostorbits}. Since 
quasi-isometries between $\mathrm{CAT}(-1)$ spaces extend to homeomorphisms of 
the boundaries, $f$ extends to an equivariant homeomorphism: 
$$
f_\infty\colon 
\partial_{\infty}X_1 \rightarrow \partial_{\infty}X_2,
$$
whose inverse is the extension of the almost-isometry $f^{-1}$. 
\begin{remark} All the choices of almost-isometries in Remark \ref{almostorbits} 
extend to the same map $f_\infty\colon \partial_{\infty}X_1 \rightarrow 
\partial_{\infty}X_2$, since for any $x_i,x_i'\in X_i$, if $\rho_i(\gamma_n)x_i 
\rightarrow \xi$, then $\rho_i(\gamma_n)x_i' \rightarrow \xi$, for $i=1,2$.
\end{remark}

For $i=1,2$, let 
$$
\begin{array}{rcl}
 \varphi_i\colon (\partial_{\infty}^{\max}(X_1\times X_ i))_{\reg}
& \to & \partial_{\infty}X_1 \times \partial_{\infty}X_i\times \mathbb{R}  \\
z\qquad & \mapsto & (\xi_i(z), \eta_i(z), h_i(z) )
\end{array}
$$
be the homeomorphism of Proposition~\ref{homeo regular}.
Choose $o\in X_1$ and $f(o)\in X_2$ as base points to compute $h_1$ and $h_2$
as in Proposition~\ref{homeo regular}:
\begin{align*}
 h_1([(x,x')\mapsto \max\{  \beta(x),\beta'( x')   \}] & = \beta(o)-\beta'(o) , \\
 h_2([(x,x')\mapsto \max\{  \beta(x),\beta''(x'' )   \}] & = \beta(o)-\beta''(f(o))),
\end{align*}
where $\beta$ and $\beta'$ are Busemann functions on $X_1$ and $\beta''$ on $X_2$.

The domain of discontinuity of the diagonal action is
$$
\Omega_1= \varphi_1^{-1} ((\partial_\infty X_1\times \partial_\infty X_1\setminus\Delta_\infty)\times \mathbb R ),
$$
where $\Delta_\infty$ denotes the diagonal of $  \partial_\infty X_1$. 
For the action on $X_1\times X_2$ define
$$
\Omega_2= \varphi_2^{-1} ((\partial_\infty X_1\times \partial_\infty X_2\setminus\Delta_{f_\infty})\times \mathbb R ).
$$
where $\Delta_{f_\infty}$ is graph of $f_\infty$:
$$\Delta_{f_\infty} =\{(\xi, \eta)\in \partial_{\infty}X_1 \times 
\partial_{\infty}X_2 \;|\; \eta=f_\infty(\xi)\}.$$

By Remark~\ref{remark:largelimitset}, $\Omega_1=\partial_\infty^{\max} ( X_1\times X_1)\setminus \overline \Lambda_1$, where $\Lambda_1$ denotes the large limit set of the diagonal action. 
For $\Omega_2$ we also have:

\begin{remark}
\label{remark_largelimitset2}
 $\Omega_2= \partial_\infty^{\max}( X_1\times X_2)\setminus \overline \Lambda_2$, where $\Lambda_2$ denotes the large limit set of the $(\rho_1\times\rho_2)$-action.
\end{remark}

As Remark~\ref{remark:largelimitset}, this remarks follows from the fact that $\Lambda_2\cong \Delta_{f_\infty}\times\mathbb{R}$, that can be proved with  the same
proof as  Lemma~\ref{limit set}.

We will prove that $\Omega_2$ is the set $\Omega^{\max}_\Gamma$ of Theorem~\ref{Thm:Omega2}
For this purpose we consider a map
$
F\colon \Omega_1\to \Omega_2
$ defined as follows.
Every $z\in \Omega_1$ can be written as 
$$
z=\lim_{n\to +\infty} (g(-n), g(n)) 
$$
for a unique geodesic $g$ in $X_1$ (this construction yields a homeomorphism between the set of bi-infinite geodesics in $X_1$ and $\Omega_1$).
Next, if $\phi_1(z)=(\xi(z),\eta(z), h_1(z))$, then define $F(z)$ by
$$
\phi_2(F(z))=(\xi(z),f_\infty(\eta(z)), h_2(F(z))),
$$
where
$$
h_2(F(z))=\limsup_{n\to +\infty} d_1(g(n), o) - d_2(f(g(-n)), f(o)).
$$
Thus, for any bi-infinite geodesic $g$ in $X_1 $,
$$
F\big(\lim_{n\to +\infty} (g(-n), g(n))\big)=\lim_{k\to +\infty} (g(-n_k), f(g(n_k))) 
$$
for some diverging subsequence $(n_k)_k$.
This map $F\colon \Omega_1\to\Omega_2$ may  be non continuous. Moreover, if we replace the 
sequence $(n)_n$ in the definition of $F$ by another diverging 
sequence of positive reals, or if we choose another equivariant almost-isometry $f$, 
the definition of $F$ will differ, but the coarse results below will 
still hold.

\begin{lemma}
\label{lemma:bounds}
Let $K$ be the constant of almost isometry of $f$. Then:
 \begin{enumerate}[(i)]
  \item $\vert h_2(F(z))-h_1(z)\vert \leq K$, $\forall z\in\Omega_1$.
  \item For $i=1,2$, if $z,z'\in \Omega_i$ satisfy $\xi_i(z)=\xi_i(z')$ and $\eta_i(z)=\eta_i(z')$, then, $\forall\gamma\in\Gamma$,
  $$
  h_i(\gamma z)-h_i(\gamma z')=h_i( z)-h_i(z') .  
  $$
  \item $\vert h_2(F(\gamma z))-h_2(\gamma F(z))\vert \leq 4K$, $\forall z\in\Omega_1$, $\forall\gamma\in\Gamma$.
 \end{enumerate}
\end{lemma}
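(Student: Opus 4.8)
The plan is to establish the three estimates in order, with (i) and (ii) formal and the content concentrated in (iii). Throughout I write $z=\lim_n(g(-n),g(n))$ for the bi-infinite geodesic $g$ in $X_1$ determined by $z\in\Omega_1$, so that $\xi_1(z)=g(-\infty)$ and $\eta_1(z)=g(+\infty)$, and I read off the coordinates of $F(z)$ from its representing sequence $(g(-n_k),f(g(n_k)))$: by Remark~\ref{signos} its third coordinate is $h_2(F(z))=\lim_k\big(d_1(g(-n_k),o)-d_2(f(g(n_k)),f(o))\big)$. For (i), the full sequence $d_1(g(-n),o)-d_1(g(n),o)$ converges to $h_1(z)$, hence so does the subsequence indexed by $(n_k)$; subtracting it from the previous display leaves $h_2(F(z))-h_1(z)=\lim_k\big(d_1(g(n_k),o)-d_2(f(g(n_k)),f(o))\big)$, and applying Definition~\ref{def:almostisometry} to the pair $(g(n_k),o)$ bounds each term by $K$, so $\vert h_2(F(z))-h_1(z)\vert\le K$.

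For (ii), the key observation is that for both actions the third coordinate transforms by a shift depending only on the two boundary coordinates and on $\gamma$. The computation proving Lemma~\ref{diagonalonboundary}, carried out for $\rho_1\times\rho_2$ with base points $o$ and $f(o)$, gives for $i=2$
\[
h_2(\gamma z)=h_2(z)+\beta^{o}_{\xi_2(z)}(\rho_1(\gamma)^{-1}o)-\beta^{f(o)}_{\eta_2(z)}(\rho_2(\gamma)^{-1}f(o)),
\]
while for $i=1$ it is exactly the diagonal formula of Lemma~\ref{diagonalonboundary}. Since the shift does not involve $h_i(z)$, two points with equal first and second coordinates receive the same shift, and subtracting yields $h_i(\gamma z)-h_i(\gamma z')=h_i(z)-h_i(z')$.

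For (iii), the main step, I first check that $F(\gamma z)$ and $\gamma F(z)$ share their first two coordinates, so that only the third coordinate can differ: $\gamma z$ is represented by the geodesic $\rho_1(\gamma)g$, so the second coordinate of $F(\gamma z)$ is $f_\infty(\rho_1(\gamma)g(+\infty))$, whereas that of $\gamma F(z)$ is $\rho_2(\gamma)f_\infty(g(+\infty))$, and these agree by equivariance of $f_\infty$; both first coordinates equal $\rho_1(\gamma)g(-\infty)$. Now I chain the formulas. Write $e_1,e_2$ for the two errors of size at most $K$ coming from (i) applied to $\gamma z$ and to $z$. Then $h_2(F(\gamma z))=h_1(\gamma z)+e_1$, the diagonal formula expands $h_1(\gamma z)=h_1(z)+\beta^{o}_{g(-\infty)}(\rho_1(\gamma)^{-1}o)-\beta^{o}_{g(+\infty)}(\rho_1(\gamma)^{-1}o)$, $h_1(z)=h_2(F(z))+e_2$, and the product formula gives $h_2(\gamma F(z))=h_2(F(z))+\beta^{o}_{g(-\infty)}(\rho_1(\gamma)^{-1}o)-\beta^{f(o)}_{f_\infty(g(+\infty))}(\rho_2(\gamma)^{-1}f(o))$. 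Subtracting, the $h_2(F(z))$ and $\beta^{o}_{g(-\infty)}$ terms cancel and
\[
h_2(F(\gamma z))-h_2(\gamma F(z))=e_1+e_2+\beta^{f(o)}_{f_\infty(g(+\infty))}(\rho_2(\gamma)^{-1}f(o))-\beta^{o}_{g(+\infty)}(\rho_1(\gamma)^{-1}o).
\]

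The remaining and genuinely substantive point, which I expect to be the main obstacle, is that a $K$-almost-isometry distorts Busemann functions by at most $2K$. Setting $q=\rho_1(\gamma)^{-1}o$ and $\zeta=g(+\infty)$, and using the exact equivariance $\rho_2(\gamma)^{-1}f(o)=f(\rho_1(\gamma)^{-1}o)=f(q)$, this amounts to showing $\vert\beta^{f(o)}_{f_\infty(\zeta)}(f(q))-\beta^{o}_{\zeta}(q)\vert\le 2K$. The clean route is the identity $\beta^{p}_{\zeta}(q)=\lim_{x\to\zeta}\big(d(x,q)-d(x,p)\big)$, which holds in $\mathrm{CAT}(-1)$ because the Gromov product extends continuously to the boundary \cite{Buyalo-Schroeder} (and indeed $\beta^{p}_{\zeta}(q)=d(p,q)-2(q\vert\zeta)_p$). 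Choosing $x_n\to\zeta$ in $X_1$, so that $f(x_n)\to f_\infty(\zeta)$ in $X_2$, and applying Definition~\ref{def:almostisometry} to the pairs $(x_n,q)$ and $(x_n,o)$ bounds the difference of the two distance-differences by $2K$ for every $n$; passing to the limit gives the claim. Collecting the contributions, $\vert h_2(F(\gamma z))-h_2(\gamma F(z))\vert\le K+K+2K=4K$, as required.
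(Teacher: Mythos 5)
Your proof is correct and follows essentially the same route as the paper's: item (i) by comparing the defining limits termwise via the $K$-almost-isometry, item (ii) from the shift formula of Lemma~\ref{diagonalonboundary}, and item (iii) by the same four-term telescoping in which the two $\beta^o_{g(-\infty)}$ contributions cancel and the residual difference of Busemann cocycles is bounded by $2K$. The only cosmetic difference is that in the final estimate you evaluate the Busemann functions along an arbitrary sequence $x_n\to\zeta$ (using that the visual and horofunction compactifications agree), whereas the paper uses the geodesic ray to $\xi_-$ and its quasi-geodesic image under $f$; both arguments are sound.
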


\begin{proof} (i)
Write $z\in \Omega_1$ as the limit $z=\lim_{n\to+\infty} (g(-n), g(n))$ for a (unique) geodesic $g$ in $\Omega_1$. Then
\begin{align*}
 h_1(z) & = \lim_{n\to+\infty} d(g(-n), o)-d(g(n),o), \\
 h_2(F(z)) & = \limsup _{n\to+\infty} d(g(-n), o)-d(f(g(n)),f(o)).
\end{align*}
From these expressions we get
$$
\vert h_2(F(z))-h_1(z)\vert\leq  \limsup _{n\to+\infty} \vert d(f(g(n)),f(o)) - d(g(n),o) \vert \leq K.
$$

(ii) We prove it for $i=2$, as the proof for $i=1$ is analogous. By Lemma~\ref{diagonalonboundary}: 
$$
h_2(\gamma z)-h_2(z)=\beta_{\xi_2(z)}^o(\gamma^{-1} o)- \beta_{\eta_2(z)}^{f(o)}(\gamma^{-1} f(o)).
$$
As we assume $\xi_2(z)=\xi_2(z')$ and $\eta_2(z)=\eta_2(z')$,   assertion (ii) is proved. 

(iii) We write:
\begin{multline*}
h_2(F(\gamma z))-h_2(\gamma F(z))=
\big( h_2(F(\gamma z))- h_1(\gamma z)\big)  +\big( h_1(\gamma z)-h_1(z)   \big)  \\ \qquad\qquad  \qquad\qquad  \qquad\qquad 
  + \big( h_1(z)-h_2(F(z))   \big) +  \big(  h_2(F(z)) -  h_2(\gamma F(z))  \big)
\\
=(I)+(II)+(III)+(IV)
. 
\end{multline*}
The terms $(I)$ and $(III)$ are bounded in absolute value by $K$ by item (i). By Lemma~\ref{diagonalonboundary}:
\begin{align*}
(II)&= h_1(\gamma z)-h_1(z)= \beta^o_{\xi_+}(\gamma^{-1} o)-\beta^o_{\xi_-}(\gamma^{-1} o), \\
(IV)&=   h_2(F(z)) -  h_2(\gamma F(z))  = -\beta^o_{\xi_+}(\gamma^{-1} o) + \beta^{f(o)}_{f_{\infty}(\xi_-)}(\gamma^{-1} f(o)) .
\end{align*}
Hence 
\begin{equation}
\label{eqn:24}
(II)+(IV)= \beta^{f(o)}_{f_{\infty}(\xi_-)}(f(\gamma^{-1} o)) - \beta^o_{\xi_-}(\gamma^{-1} o).
\end{equation}
For $r\colon[0,+\infty)\to X_1$ the geodesic ray with $r(0)=o$ that converges to $\xi_-$: 
\begin{equation}
\label{eqn:betao}
\beta^o_{\xi_-}(\gamma^{-1} o)=\lim_{t\to+\infty} d_1(r(t),\gamma^{-1} o)- d_1(r(t), o).
\end{equation}
On the other hand, $f\circ r\colon [0,+\infty)\to X_2$ is a quasi-geodesic that converges to $f_{\infty}(\xi_-)$.
Since the visual compactification and the compactification by horofunctions coincide for a $\CAT(-1)$-space, 
there is a diverging sequence of times  $(t_k)\to+\infty$
such that
\begin{equation}
\label{eqn:betafo}
\beta^{f(o)}_{f_{\infty}(\xi_-)}(f(\gamma^{-1} o))= \lim_{k\to+\infty} d_2(f(r(t_k)),f(\gamma^{-1} o))- d_1(f(r(t_k)), f(o)).
\end{equation}
Since $f$ is a $K$-almost isometry, it follows from \eqref{eqn:24}, \eqref{eqn:betao} and~\eqref{eqn:betafo} that
$
\vert (II)+(IV)\vert\leq 2 K
$.
\end{proof}

\begin{lemma}
\label{lemma:6K}
 Let $z\in \Omega_1$ and $y\in\Omega_2$ be such that $\xi_1(z)=\xi_2(y)$, $f_{\infty}(\eta_1(z))=\eta_2(y)$, and $h_1(z)=h_2(y)$. Then 
 $$
 \vert h_1(\gamma z)-h_2(\gamma y)\vert \leq 6K,\qquad  \forall\gamma\in\Gamma.
 $$
\end{lemma}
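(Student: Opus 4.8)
The plan is to compare $\gamma z$ and $\gamma y$ through the auxiliary point $F(z)\in\Omega_2$, exploiting that the three hypotheses relating $z$ and $y$ force $F(z)$ and $y$ to agree in their first two $\varphi_2$-coordinates, so that the three items of Lemma~\ref{lemma:bounds} can be chained together. First I would record that $\Omega_1$ and $\Omega_2$ are $\Gamma$-invariant: the diagonal action preserves $\Delta_\infty$, and since $f_\infty$ is equivariant the product action preserves its graph $\Delta_{f_\infty}$; hence $\gamma z\in\Omega_1$ and $\gamma y\in\Omega_2$, and Lemma~\ref{lemma:bounds} applies with $z$ replaced by $\gamma z$.

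The key preliminary observation is that $\varphi_2(F(z))=(\xi_1(z),f_\infty(\eta_1(z)),h_2(F(z)))$, so the hypotheses $\xi_1(z)=\xi_2(y)$ and $f_\infty(\eta_1(z))=\eta_2(y)$ say precisely that $\xi_2(F(z))=\xi_2(y)$ and $\eta_2(F(z))=\eta_2(y)$. Combining item~(i) of Lemma~\ref{lemma:bounds} with the hypothesis $h_1(z)=h_2(y)$ yields the starting bound
$$
\vert h_2(F(z))-h_2(y)\vert=\vert h_2(F(z))-h_1(z)\vert\leq K.
$$

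Then I would telescope the difference as
$$
h_1(\gamma z)-h_2(\gamma y)=\big(h_1(\gamma z)-h_2(F(\gamma z))\big)+\big(h_2(F(\gamma z))-h_2(\gamma F(z))\big)+\big(h_2(\gamma F(z))-h_2(\gamma y)\big).
$$
The first summand is bounded in absolute value by $K$ via item~(i) applied to $\gamma z$; the second by $4K$ via item~(iii). For the third, since $F(z)$ and $y$ share their first two $\varphi_2$-coordinates, item~(ii) with $i=2$ converts the $\gamma$-dependent difference into the $\gamma$-independent constant $h_2(\gamma F(z))-h_2(\gamma y)=h_2(F(z))-h_2(y)$, which the preliminary observation bounds by $K$. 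Summing the three estimates gives $\vert h_1(\gamma z)-h_2(\gamma y)\vert\leq K+4K+K=6K$, as claimed.

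This is essentially bookkeeping once Lemma~\ref{lemma:bounds} is available; the only point requiring care is the role of item~(ii), whose purpose is exactly to replace a quantity depending on $\gamma$ by a $\gamma$-independent one, so that the otherwise ``moving'' third term collapses to the already-controlled constant $h_2(F(z))-h_2(y)$. I do not expect any genuine obstacle beyond keeping the identifications $\xi_2(F(z))=\xi_2(y)$ and $\eta_2(F(z))=\eta_2(y)$ straight.
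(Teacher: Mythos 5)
Your proof is correct and is essentially identical to the paper's: the same three-term telescoping through $F(\gamma z)$ and $\gamma F(z)$, with items (i), (iii), and (ii)$+$(i) of Lemma~\ref{lemma:bounds} giving the bounds $K$, $4K$, and $K$ respectively. The extra care you take in checking $\Gamma$-invariance of $\Omega_1,\Omega_2$ and the matching of the first two $\varphi_2$-coordinates of $F(z)$ and $y$ is left implicit in the paper but is exactly what justifies applying item (ii).
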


\begin{proof} It is a consequence of  the following three inequalities:
 \begin{align*}
 \vert h_1(\gamma z)-h_2(F(\gamma z))\vert & \leq K ,\\
 \vert h_2(F(\gamma z ))- h_2(\gamma  F(z ) ) \vert & \leq 4 K ,\\
 \vert h_2(\gamma  F(z ) ) - h_2(\gamma  y ) \vert & =  \vert h_2( F(z ) ) - h_2( y ) \vert 
      = \vert h_2( F(z ) ) - h_1( z)\vert \leq K .
\end{align*}
Here we have used  Lemma~\ref{lemma:bounds}, item (i) for the first line, item (iii) for the second, and items (ii) and  (i) for the last one. 
\end{proof}

\begin{proposition}
\label{prop:omega2}
 The  action of $\Gamma$ on $\Omega_{2}$ is properly discontinuous and cocompact.
 
\end{proposition}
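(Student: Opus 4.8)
The plan is to transport the dynamics of the diagonal action on $\Omega_1$ to $\Omega_2$ by means of an explicit homeomorphism that is equivariant up to a bounded error in the $\mathbb{R}$-direction, and then to absorb that error into compact ``vertical'' thickenings. First I would record the starting point: since $\rho_1$ is cocompact, $\Omega_1$ is the set $\Omega^{\max}$ of the diagonal action of $\Gamma$ on $X_1\times X_1$, so by Theorem~\ref{quasiconvexdomain} (the cocompact case being \cite{Tere}) and the identification $\Omega_1\cong G$ with the space of parametrized geodesics of $X_1$, the action of $\Gamma$ on $\Omega_1$ is properly discontinuous and cocompact. Proper discontinuity is inherited from the action on $X_1\times X_1\cup\Omega_1$, and cocompactness follows because the evaluation $G\to X_1$, $g\mapsto g(0)$, is a proper, continuous and equivariant map while $\Gamma$ acts properly discontinuously and cocompactly on $X_1$.

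Next I would introduce the homeomorphism
$$\Phi=\varphi_2^{-1}\circ(\mathrm{id}\times f_\infty\times\mathrm{id})\circ\varphi_1\colon \Omega_1\to\Omega_2,$$
which leaves the $\mathbb{R}$-coordinate $h$ unchanged and applies $f_\infty$ to the $\partial_\infty X_1$-coordinate, using the homeomorphisms $\varphi_i$ of Proposition~\ref{homeo regular}. Since $f_\infty$ is an injective homeomorphism, $(\xi,\eta)\mapsto(\xi,f_\infty(\eta))$ carries the complement of $\Delta_\infty$ onto the complement of $\Delta_{f_\infty}$, so $\Phi$ really is a homeomorphism $\Omega_1\to\Omega_2$. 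The crucial point is its \emph{coarse equivariance}: for $z\in\Omega_1$ and $\gamma\in\Gamma$, the points $\Phi(\gamma z)$ and $\gamma\Phi(z)$ share the same $\partial_\infty X_1$- and $\partial_\infty X_2$-coordinates (using the equivariance of $f_\infty$ together with the transformation rule of Lemma~\ref{diagonalonboundary}), while their $h$-coordinates differ by at most $6K$. Indeed, $y:=\Phi(z)$ satisfies $\xi_1(z)=\xi_2(y)$, $f_\infty(\eta_1(z))=\eta_2(y)$ and $h_1(z)=h_2(y)$, which are exactly the hypotheses of Lemma~\ref{lemma:6K}; that lemma then gives $\vert h_2(\Phi(\gamma z))-h_2(\gamma\Phi(z))\vert=\vert h_1(\gamma z)-h_2(\gamma y)\vert\leq 6K$.

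I would then work in $\varphi_2$-coordinates and, for $S\subset\Omega_2$ and $r\geq 0$, define the vertical thickening $N_r(S)$ consisting of the points with the same two boundary coordinates as some point of $S$ and $\mathbb{R}$-coordinate within $r$. By Lemma~\ref{lemma:bounds}(ii) (equivalently the Remark after Lemma~\ref{diagonalonboundary}), the $\mathbb{R}$-shift of the action depends only on the two boundary coordinates, so $\gamma N_r(S)=N_r(\gamma S)$; and if $S$ is compact then $N_r(S)$ is contained in a compact subset of $\Omega_2$, since a compact set has bounded $\mathbb{R}$-coordinates and compact boundary-projection. For cocompactness, take a compact $K_1$ with $\Gamma K_1=\Omega_1$ and a compact $\tilde C\supset N_{6K}(\Phi(K_1))$: any $y\in\Omega_2$ equals $\Phi(\gamma z_0)$ for some $z_0\in K_1$, and coarse equivariance places $y\in N_{6K}(\gamma\Phi(z_0))=\gamma N_{6K}(\Phi(z_0))\subset\gamma\tilde C$, so $\Gamma\tilde C=\Omega_2$. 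For proper discontinuity, given compact $L\subset\Omega_2$ with $\gamma L\cap L\neq\emptyset$, pick $y\in L$ with $\gamma y\in L$ and set $z=\Phi^{-1}(y)$; coarse equivariance gives $\Phi(\gamma z)\in N_{6K}(\gamma y)\subset N_{6K}(L)$, hence $\gamma z\in\Phi^{-1}(N_{6K}(L))$. Enclosing $\Phi^{-1}(N_{6K}(L))$ in a compact $\tilde L_1\subset\Omega_1$ (which also contains $\Phi^{-1}(L)\ni z$), we get $\gamma z\in\tilde L_1\cap\gamma\tilde L_1$, so only finitely many $\gamma$ can occur by proper discontinuity on $\Omega_1$.

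The main obstacle is precisely that $\Phi$ (like the map $F$) fails to be equivariant: the two boundary coordinates are matched exactly, but the $\mathbb{R}$-coordinate is only controlled up to the additive constant $6K$. The whole argument is therefore organized around showing that this bounded defect can be absorbed into compact vertical thickenings, and that these thickenings are compatible with the action (Lemma~\ref{lemma:bounds}(ii)) and remain compact; once that bookkeeping is in place, both properties transfer formally from $\Omega_1$ to $\Omega_2$.
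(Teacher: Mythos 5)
Your proposal is correct and follows essentially the same route as the paper: your homeomorphism $\Phi$ is exactly the paper's pointwise correspondence $y\mapsto z$ (matching the two boundary coordinates via $f_\infty$ and keeping the $\mathbb{R}$-coordinate), and the whole argument rests, as in the paper, on Lemma~\ref{lemma:6K} bounding the equivariance defect by $6K$ together with Lemma~\ref{lemma:bounds}(ii). The only difference is packaging --- you phrase proper discontinuity and cocompactness via compact sets and vertical thickenings $N_r(S)$, where the paper uses sequences and the dynamical-relation criterion --- which is a legitimate and equivalent formulation.
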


\begin{proof}
We prove proper discontinuity by showing that no two points in $\Omega_2$
are dynamically 
related. Recall that two points $x,y$ in a metric space $Z$ are dynamically related by 
$\Gamma$ if there exist a sequences
$(z_n)_n$ in $Z$ and $(\gamma_n)_n$ in $\Gamma$  such that
$z_n\rightarrow x$, $\gamma_n\rightarrow 
\infty$, and  $\gamma_nz_n\rightarrow y$, see \cite{Frances}. Proper discontinuity is equivalent to 
the property that any two points (possibly equal) are not dynamically related.

By contradiction, we assume $ y_\infty$ and $\overline{y_\infty}$ in $\Omega_2$ are dynamically related, and we will show that two points in $\Omega_1$ are dynamically related.
Namely, assume
that there exists a sequence $(y_n)_n$ in $\Omega_2$ and a diverging sequence $(\gamma_n)_n$ in $\Gamma$ such that
$
y_n\to y_\infty\in\Omega_2 \textrm{ and } \gamma_ny_n\to\overline{y_\infty}\in\Omega_2.
$
For each $n\in \mathbb{N}$ let $z_n\in \Omega_1$ be such that $\xi_1(z_n)=\xi_2(y_n)$, $f_\infty(\eta_1(z_n))=\eta_2(y_n)$, and 
$h_1(z_n)= h_2(y_n)$ (we have defined $\varphi_i=(\xi_i,\eta_i,h_i)$). Since $\phi_1$ and $\phi_2$ are homeomorphisms, 
$z_n\to z_\infty\in\Omega_1$. On the other hand, 
the coordinates $\xi_1(\gamma_n z_n)$  and $\eta_1(\gamma_n z_n)$ also converge and it remains to bound $\vert h_1(\gamma_nz_n)\vert$: 
by Lemma~\ref{lemma:6K}  $\vert h_1(\gamma_nz_n) -  h_2(\gamma_n y_n) \vert\leq 6 K$ and 
$ h_2(\gamma_n y_n) \to  h_2(\overline{ y_\infty})$. 

Next we prove cocompactness. Let $(y_n)_n$ be a sequence in $\Omega_2$. For every $n\in\mathbb{N}$ we consider $z_n\in\Omega_1$ as above: 
$\xi_1(z_n)=\xi_2(y_n)$, $\eta_1(z_n)=f_\infty(\eta_2(y_n))$, and 
$h_1(z_n)= h_2(y_n)$. As the action is cocompact in $\Omega_1$, there exists a sequence  $\gamma_n$ in $\Gamma$ such that $\gamma_nz_n$ converges, 
and all 
we need to prove is that $\vert h_2(\gamma_n y_n)\vert$ is bounded. This is a consequence of the inequality
 $\vert h_2(\gamma_n y_n)-h_1(\gamma_n z_n)\vert \leq  6 K$ (by Lemma~\ref{lemma:6K}) and that
$h_1(\gamma_n z_n)$ converges.
\end{proof}

Now we consider the action on the whole $X_1\times X_2\cup \Omega_2$. We need the following lemma:

\begin{lemma}
\label{lemma:omega12}
Let $(x_n,y_n)_n$ be a diverging sequence in $X_1\times X_1$. The accumulation set of $(x_n,y_n)_n$  is contained in 
 $\Omega_1$ if and only if the accumulation set of $(x_n,f(y_n))_n$ is contained  in $\Omega_2$.

\end{lemma}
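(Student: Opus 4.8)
The plan is to reduce both inclusions to conditions on subsequential limits, and then to match these conditions using two properties of the almost-isometry $f$: that it distorts distances by at most $K$, and that its boundary extension $f_\infty$ is an injective homeomorphism. Throughout I use the base points $o\in X_1$ and $f(o)\in X_2$ fixed before the definition of $\Omega_2$.

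First I would pass to subsequences. Since $f$ is a $K$-almost-isometry, $|d_1(y_n,o)-d_2(f(y_n),f(o))|\leq K$, so $(x_n,f(y_n))$ diverges in $(X_1\times X_2,d_{\max})$ whenever $(x_n,y_n)$ diverges in $X_1\times X_1$; hence every accumulation point of either sequence lies on the respective ideal boundary. By compactness of $\overline{X_1}$, $\overline{X_2}$ and of $[-\infty,+\infty]$, it suffices to examine subsequences along which $x_n\to\xi\in\overline{X_1}$, $y_n\to\zeta\in\overline{X_1}$, and $d_1(x_n,o)-d_1(y_n,o)\to C\in[-\infty,+\infty]$. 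By Remark~\ref{signos} together with the description of the singular part in Proposition~\ref{boundary points}, the limit of $(x_n,y_n)$ in $\overline{X_1\times X_1}^{\max}$ lies in $\Omega_1$ exactly when $C$ is finite (so the limit is regular, with $\xi,\zeta\in\partial_\infty X_1$) and $\xi\neq\zeta$ (so it lies off $\Delta_\infty$). Thus the accumulation set of $(x_n,y_n)$ is contained in $\Omega_1$ if and only if \emph{(i)} $|d_1(x_n,o)-d_1(y_n,o)|$ is bounded, and \emph{(ii)} no such subsequence has $\xi=\zeta$. The analogous analysis in $X_1\times X_2$ shows that the accumulation set of $(x_n,f(y_n))$ is contained in $\Omega_2$ if and only if \emph{(i$'$)} $|d_1(x_n,o)-d_2(f(y_n),f(o))|$ is bounded, and \emph{(ii$'$)} no subsequence has $f(y_n)\to\eta$ with $\eta=f_\infty(\xi)$.

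Then I would match the two pairs of conditions. For boundedness, the almost-isometry estimate gives
\[
\bigl|\,|d_1(x_n,o)-d_2(f(y_n),f(o))|-|d_1(x_n,o)-d_1(y_n,o)|\,\bigr|\leq K,
\]
so \emph{(i)} and \emph{(i$'$)} are equivalent. For the endpoint condition, recall that $f$ extends to the homeomorphism $f_\infty\colon\partial_\infty X_1\to\partial_\infty X_2$; in particular, along any subsequence with $y_n\to\zeta\in\partial_\infty X_1$ one has $f(y_n)\to f_\infty(\zeta)$. Since $f_\infty$ is injective, the relation $\eta=f_\infty(\xi)$, i.e.~$f_\infty(\zeta)=f_\infty(\xi)$, holds if and only if $\zeta=\xi$. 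Hence \emph{(ii)} and \emph{(ii$'$)} are equivalent, and the lemma follows.

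The one external input beyond these two elementary translations — and the step requiring the most care — is the compatibility of $f$ with sequential boundary convergence, namely that $y_n\to\zeta\in\partial_\infty X_1$ forces $f(y_n)\to f_\infty(\zeta)$. This is the standard fact that a quasi-isometry between proper $\CAT(-1)$ (hence Gromov hyperbolic) spaces sends sequences converging to an ideal point to sequences converging to its image, which can be verified via the Gromov product (a quasi-isometry distorts Gromov products by a bounded amount, so $(y_n|y_m)_o\to+\infty$ yields $(f(y_n)|f(y_m))_{f(o)}\to+\infty$). The remaining content is the bookkeeping of the simultaneous convergence of the data $(\xi,\zeta,C)$ and the purely formal matching of conditions above.
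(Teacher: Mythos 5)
Your proof is correct and follows essentially the same route as the paper's: the boundedness of $|d_1(x_n,o)-d_1(y_n,o)|$ versus $|d_1(x_n,o)-d_2(f(y_n),f(o))|$ is matched by the $K$-almost-isometry estimate, and the off-diagonal (resp.\ off-graph-of-$f_\infty$) condition is matched via the boundary homeomorphism $f_\infty$. Your version is somewhat more systematic in spelling out the subsequential-limit criteria and the endpoint condition in both directions, but the substance is identical.
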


\begin{proof}
First assume that  $(x_n,y_n)_n$  converges to a point in $ \Omega_1$. Namely 
 $x_n\to \xi\in\partial_{\infty} X_1$, $y_n\to \eta\neq \xi\in\partial_{\infty} X_1$
and $\vert d_1(x_n,o)-d_1(y_n,o)\vert $ is bounded. Thus, as $x_n\to \xi$ and $f(y_n) \to f_\infty(\eta)\neq f_\infty(\xi)$, the assertion 
follows from the estimate
\begin{multline*}
| d_1(x_n,o)- d_2( f(y_n), f_n(o) ) | \\ \leq  | d_1(x_n,o)- d_1(y_n,o) |  + | d_1(y_n,o)- d_2( f(y_n), f_n(o) ) | ,
\end{multline*}
that is bounded because $f$ is $K$-almost isometry.
 
 For the converse, assuming that  $\vert d_1(x_n,o)-d_2(y_n,f(o))\vert $ is bounded, we write:
 \begin{multline*}
|d_1(x_n, o)- d_1(f^{-1}(y_n), o)|\\\leq |d_1(x_n, o) - d_2(y_n, f(o))| + |d_2(y_n, f(o)) -d_1(f^{-1}(y_n), o)|, 
\end{multline*}
that is bounded because:
\begin{multline*}
|d_2(y_n, f(o)) -d_1(f^{-1}(y_n), o)|\leq  |d_2(y_n, f(o))- d_1(f^{-1}(y_n), 
f^{-1}(f(o)))|\\+|d_1(f^{-1}(y_n), f^{-1}(f(o))) -d_1(f^{-1}(y_n), o)| \\\leq  
|d_2(y_n, f(o))- d_1(f^{-1}(y_n), f^{-1}(f(o)))| + d_1(f^{-1}(f(o)), o) \leq 2K.\qedhere
\end{multline*}
\end{proof}

\begin{theorem} The  action of $\Gamma$ on $X_1\times X_2\cup \Omega_2$ is properly discontinuous and cocompact.
\end{theorem}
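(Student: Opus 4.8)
The plan is to transport the proper discontinuity and cocompactness of the diagonal action on $X_1\times X_1\cup\Omega_1$ to the product action on $X_1\times X_2\cup\Omega_2$ through the equivariant $K$-almost-isometry $\mathrm{Id}\times f\colon X_1\times X_1\to X_1\times X_2$ (it is an almost-isometry for $d_{\max}$ because $f$ is, and it is equivariant since $f\circ\rho_1(\gamma)=\rho_2(\gamma)\circ f$). The two ingredients I will use are: the diagonal action on $X_1\times X_1\cup\Omega_1$ is properly discontinuous and cocompact by Theorem~\ref{quasiconvexdomain} applied to the cocompact representation $\rho_1$ (for which $\Omega_1=\Omega^{\max}=\tilde\pi^{-1}(X_1)\cap\partial_{\infty}^{\max}(X_1\times X_1)$), together with Proposition~\ref{prop:omega2} and Lemma~\ref{lemma:omega12}. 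I also record that the product action on the open stratum $X_1\times X_2$ is properly discontinuous, since $\rho_1$ is discrete with finite kernel and $d_{\max}(\gamma(x,y),(x,y))\geq d_1(\rho_1(\gamma)x,x)$, and that $X_1\times X_2\cup\Omega_2$ is open in $\overline{X_1\times X_2}^{\max}$, hence locally compact Hausdorff, because $\Omega_2=\partial_{\infty}^{\max}(X_1\times X_2)\setminus\overline{\Lambda_2}$ by Remark~\ref{remark_largelimitset2}; in particular $\Omega_2\cap\Lambda_2=\emptyset$.

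For proper discontinuity I will use the criterion (as in the proof of Proposition~\ref{prop:omega2}) that the action is properly discontinuous if and only if no two points of $X_1\times X_2\cup\Omega_2$ are dynamically related. The boundary--boundary case is exactly Proposition~\ref{prop:omega2}. If $p\in X_1\times X_2$ is interior and is dynamically related to some $q$, say $z_n\to p$, $\gamma_n\to\infty$ and $\gamma_nz_n\to q$, then $d_{\max}(z_n,p)\to 0$ forces $d_{\max}(\gamma_nz_n,\gamma_np)\to 0$, so $\gamma_np\to q$. If $q\in\Omega_2$ this puts $q$ in the large limit set $\Lambda_2$, contradicting $\Omega_2\cap\Lambda_2=\emptyset$; if $q$ is interior it contradicts proper discontinuity on $X_1\times X_2$. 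Applying the same argument to $\gamma_n^{-1}$ disposes of the symmetric case in which the interior point is the image. Hence no dynamical relations occur and the action is properly discontinuous.

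For cocompactness I will use the criterion that a continuous $\Gamma$-action on a locally compact, second countable, Hausdorff space is cocompact if and only if every sequence $(p_n)$ admits $\gamma_n\in\Gamma$ with $(\gamma_np_n)$ subconvergent in the space (this follows from a standard exhaustion argument). Let $(p_n)$ be a sequence in $X_1\times X_2\cup\Omega_2$ and choose compact sets $D_1$, $D_2$ with $\Gamma D_1=X_1\times X_1\cup\Omega_1$ and $\Gamma D_2=\Omega_2$ (Proposition~\ref{prop:omega2}). If infinitely many $p_n\in\Omega_2$, writing $p_n=\gamma_n^{-1}d_n$ with $d_n\in D_2$ gives $\gamma_np_n=d_n$ subconvergent in $\Omega_2$. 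Otherwise infinitely many $p_n=(x_n,y_n)$ are interior; set $q_n=(x_n,f^{-1}(y_n))\in X_1\times X_1$, so that $(\mathrm{Id}\times f)(q_n)$ is $d_{\max}$-within $K$ of $p_n$, and choose $\gamma_n$ with $\gamma_nq_n\in D_1$, hence subconvergent to some $q_\infty\in X_1\times X_1\cup\Omega_1$. By equivariance $\gamma_np_n$ stays within $K$ of $(\mathrm{Id}\times f)(\gamma_nq_n)$. If $q_\infty$ is interior, continuity of $\mathrm{Id}\times f$ keeps $\gamma_np_n$ in a precompact region and it subconverges in $X_1\times X_2$. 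If $q_\infty\in\Omega_1$, then Lemma~\ref{lemma:omega12} places the accumulation set of $(\mathrm{Id}\times f)(\gamma_nq_n)$ inside $\Omega_2$; since $\gamma_np_n$ is uniformly $d_{\max}$-close, its $\partial_{\infty}X_1$- and $\partial_{\infty}X_2$-coordinates agree with those of $(\mathrm{Id}\times f)(\gamma_nq_n)$, and although the $\mathbb{R}$-coordinate may shift by a bounded amount, $\Omega_2$ is a union of full $\mathbb{R}$-fibers over $(\partial_{\infty}X_1\times\partial_{\infty}X_2)\setminus\Delta_{f_\infty}$, so $\gamma_np_n$ subconverges into $\Omega_2$.

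The main obstacle is exactly the last point in the cocompactness argument: because the boundary extension of $\mathrm{Id}\times f$ (and the map $F$) need not be continuous, translating $q_n$ into $X_1\times X_1\cup\Omega_1$ only controls $\gamma_np_n$ up to a bounded $d_{\max}$-error. Turning this coarse control into genuine convergence inside $X_1\times X_2\cup\Omega_2$ relies on the structural fact that $\Omega_2$ is saturated in the $\mathbb{R}$-direction, so a bounded shift of the third coordinate cannot push a limit out of $\Omega_2$; keeping careful track of which coordinates are preserved exactly and which only coarsely is the delicate bookkeeping of the proof.
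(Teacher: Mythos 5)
Your overall strategy is the paper's: transfer everything through the equivariant almost-isometry $f$, using Proposition~\ref{prop:omega2} for the boundary part and Lemma~\ref{lemma:omega12} to pass between $\Omega_1$ and $\Omega_2$. Your cocompactness argument is correct, and in fact slightly more careful than the paper's, since you address explicitly the bounded discrepancy between $\rho_2(\gamma_n)y_n$ and $f(\rho_1(\gamma_n)f^{-1}(y_n))$ and observe that $\Omega_2$ is saturated in the $\mathbb{R}$-direction, so a bounded shift of the third coordinate cannot push an accumulation point out of $\Omega_2$.

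There is, however, a genuine gap in the proper discontinuity argument. Your case analysis is organized by the location of the two dynamically related points $p$ and $q$: you treat the cases where at least one of them is interior, and you dispose of the case $p,q\in\Omega_2$ by invoking Proposition~\ref{prop:omega2}. But that proposition only rules out dynamical relations witnessed by sequences $(z_n)$ lying in $\Omega_2$; for the action on the union $X_1\times X_2\cup\Omega_2$ one must also exclude the possibility that two points of $\Omega_2$ are dynamically related through a sequence of \emph{interior} points, i.e.\ that there exist $(x_n,y_n)\in X_1\times X_2$ with $(x_n,y_n)\to p\in\Omega_2$, $\gamma_n\to\infty$, and $(\rho_1(\gamma_n)x_n,\rho_2(\gamma_n)y_n)\to q\in\Omega_2$. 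None of your cases applies here: $p$ is not interior, so there is no metric control $d_{\max}(z_n,p)\to 0$, and Proposition~\ref{prop:omega2} says nothing about interior witnessing sequences. This is in fact the substantive case, and the paper excludes it by applying Lemma~\ref{lemma:omega12} to both $(x_n,f^{-1}(y_n))$ and $(\rho_1(\gamma_n)x_n,\rho_1(\gamma_n)f^{-1}(y_n))$, which then accumulate in $\Omega_1$ and contradict proper discontinuity of the diagonal action on $X_1\times X_1\cup\Omega_1$ (Theorem~\ref{quasiconvexdomain}). Since you already use Lemma~\ref{lemma:omega12} in the cocompactness part, the repair is short, but as written the proof of proper discontinuity is incomplete.
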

\begin{proof} 
For proper discontinuity we will prove that  
no two points in $X_1\times X_2\cup \Omega_2$ are dynamically 
related, as in the proof of Proposition~\ref{prop:omega2}. 
Since the 
action is properly discontinuous on both $X_1\times X_2$ and 
$\Omega_2$, it is enough to check that if $(x_n, y_n)$ is a 
sequence in $X_1\times X_2$ that converges to a point
$z\in\Omega_2$, then there is no divergent sequence 
$(\gamma_n)_n\subset \Gamma$ such that $(\rho_1(\gamma_n)x_n, 
\rho_2(\gamma_n)y_n)$ accumulates in $X_1\times X_2\cup\Omega_2$. 
By contradiction, assume that such a divergent sequence 
$(\gamma_n)_n\subset \Gamma$ exists. 
If $(\rho_1(\gamma_n)x_n, \rho_2(\gamma_n)y_n)$ converges to a point $(x, 
y)\in X_1\times X_2$, then
$$
d_{\max} ( (\rho_1(\gamma_ n^{-1})(x), 
\rho_2(\gamma_ n^{-1}) (y)), (x_n, y_n))
$$
is uniformly bounded and 
$(\rho_1(\gamma_ n^{-1})(x), 
\rho_2(\gamma_ n^{-1}) (y))_n$ converges to the same point as $(x_n,y_n)_n$. Hence $z\in\Omega_2 $ is the accumulation point of an orbit
and we get a contradiction with Remark~\ref{remark_largelimitset2}. 

Therefore, we assume 
that  $(\rho_1(\gamma_n)x_n, \rho_2(\gamma_n)y_n)$ accumulates in 
$\Omega_2$. By Lemma~\ref{lemma:omega12}, both sequences 
$(x_n, f^{-1}(y_n))$ and  $(\rho_1(\gamma_n)x_n, \rho_1(\gamma_n)f^{-1}(y_n))$
accumulate in $\Omega_1$, which contradicts that $\Gamma$ acts properly discontinuously
on $X_1\times X_1\cup \Omega_1$.

To prove cocompactness and using Proposition~\ref{prop:omega2}, consider a sequence $(x_n,y_n)$ in
$X_1\times X_2$. There exists a sequence $\gamma_n$ of elements in $\Gamma$ such that 
$(\rho_1(\gamma_n) (x_n),\rho_1(\gamma_n) (f^{-1}(y_n)))$ accumulates in $X_1\times X_1\cup \Omega_1$.
Again by Lemma~\ref{lemma:omega12} $(\rho_1(\gamma_n) (x_n),\rho_2(\gamma_n) (y_n))$ accumulates in
$X_1\times X_2\cup \Omega_2$.
\end{proof}

\noindent\textbf{Acknowledgement.}
We are indebted to the anonymous referee for suggestions that have improved substantially the paper.

\end{document}